\documentclass[12pt,a4paper]{amsart}

\usepackage{amsmath,amssymb}
\usepackage[margin=2.5cm]{geometry}
\usepackage{hyperref}
\usepackage{xcolor}

\newtheorem{theorem}{Theorem}[section]
\newtheorem{lemma}[theorem]{Lemma}
\newtheorem{proposition}[theorem]{Proposition}
\newtheorem{corollary}[theorem]{Corollary}

\theoremstyle{definition}
\newtheorem{definition}[theorem]{Definition}
\newtheorem{example}[theorem]{Example}

\newtheorem{remark}[theorem]{Remark}

\title[Smallest numerical semigroups closed under affine maps]{On the smallest numerical semigroups closed under affine maps}

\author{A. \'Alvarez}
\address{A. \'Alvarez\\ Departamento de Matem\'aticas, Universidad de Extremadura, 06071 Badajoz, Spain. ORCID: \href{https://orcid.org/0000-0002-9095-3221}{0000-0002-9095-3221}}
\email{aalarma@unex.es}

\author{C.-J. Moreno-\'Avila*}
\address{C.-J. Moreno-\'Avila\\ Departamento de Matem\'aticas, Universidad de Extremadura, 10071 C\'aceres, Spain. ORCID: \href{https://orcid.org/0000-0002-2374-5932}{0000-0002-2374-5932}}
\email{cjmoravi@unex.es}

\author{I. Ojeda}
\address{I. Ojeda\\ Departamento de Matem\'aticas, Universidad de Extremadura, 06071 Badajoz, Spain. ORCID: \href{https://orcid.org/0000-0003-3173-5934}{0000-0003-3173-5934}}
\email{ojedamc@unex.es}
\begin{document}

\begin{abstract}
We study numerical semigroups $S_{a,b}(m)$ generated by the orbit of $m$ under the affine map $T_{a,b}(z)=az+b$, where $a\ge 2$, $m>1$, $\gcd(b,m)=1$, and $b\ge -(a-2)m-2$. This extends the usual affine-closed setting to feasible negative values of $b$. We write $A_i=(a^i-1)/(a-1)$ and let $n$ be the smallest positive integer such that $A_n\ge m$.

We determine the minimal generators and give an explicit description of the Ap\'ery set, obtaining homogeneity and formulas for the Frobenius number and the genus. We also study pseudo-Frobenius numbers via the induced Ap\'ery parametrization and prove the sharp upper bound $\operatorname{t}(S_{a,b}(m))\le n-1$ for the type.

We give a complete characterization of the symmetric members of the family in terms of the canonical representative of $m-1$. Finally, we exhibit a subfamily whose pseudo-Frobenius numbers form an arithmetic progression of length $n-1$; in particular, this subfamily attains the bound.
\end{abstract}

\subjclass[2020]{Primary 20M14; Secondary 11D07, 13A02.}

\keywords{Numerical semigroup; affine-closed numerical semigroup; affine map; Apéry set; pseudo-Frobenius number; homogeneous numerical semigroup; symmetric numerical semigroup.}

\thanks{The second and third authors are partially supported by projects PID2022-138906NB-C22 and PID2022-138906NB-C21, respectively, funded by MCIN/AEI/10.13039/501100011033 and the European Union NextGenerationEU/PRTR. All three authors are also partially supported by grant GR24068, funded by the Junta de Extremadura and co-funded by the European Regional Development Fund (ERDF)}

\thanks{* Corresponding author}

\maketitle

\section{Introduction}

Throughout the paper, we use the convention $\mathbb{N}=\{0,1,2,\dots\}$. A numerical semigroup is a submonoid $S$ of $(\mathbb{N},+)$ such that $\mathbb{N}\setminus S$ is finite. Equivalently, $S$ is an additive submonoid of the nonnegative integers containing $0$ and generated by a finite set of positive integers with greatest common divisor equal to $1$. A standard reference for the theory of numerical semigroups is \cite{libro}.

Several arithmetic invariants play a central role in the study of numerical semigroups. The multiplicity $\operatorname{m}(S)$ of $S$ is its smallest positive element. By \cite[Theorem~2.7]{libro}, every numerical semigroup has a unique finite minimal system of generators, denoted $\operatorname{msg}(S)$; its cardinality is the embedding dimension $\operatorname{e}(S)$. The Frobenius number $\operatorname{F}(S)$ is the largest integer not belonging to $S$, and the genus is the number of gaps, $g(S)=|\mathbb{N}\setminus S|$. The Ap\'ery set of $S$ with respect to $\operatorname{m}(S)$ is
\[
\operatorname{Ap}(S)=\operatorname{Ap}(S,\operatorname{m}(S))=\{s\in S\mid s-\operatorname{m}(S)\notin S\},
\]
and it encodes much of the structure of $S$. Another important invariant is the type $\operatorname{t}(S)$, defined as the cardinality of the set of pseudo-Frobenius numbers of $S$. Recall that an integer $x\notin S$ is a pseudo-Frobenius number of $S$ if $x+s\in S$ for all $s\in S\setminus\{0\}$.

Given integers $a,b$, let $T_{a,b}:\mathbb Z\to\mathbb Z$ be the affine map $T_{a,b}(z)=az+b$. Following \cite{Ugo17}, a numerical semigroup $S$ is said to be closed under $T_{a,b}$ if $T_{a,b}(S\setminus\{0\})\subseteq S\setminus\{0\}$. Ugolini initiated a systematic study of such semigroups for affine maps with positive parameters, providing a general construction that unifies several classical families, including Thabit, Mersenne, and repunit numerical semigroups. These families had been previously studied in detail by Branco, Rosales, and Torr\~ao; see, for instance, \cite{RosBraTorThabit,RosBraTorMersenne,RosBraTorRepunit}. Numerical semigroups defined by closure under affine maps have also been studied from an algorithmic viewpoint; see Robles-P\'erez and Rosales \cite{RoblesRosales18}, who introduce $(a,b)$-monoids and show that the corresponding families form Frobenius varieties.

Affine-recursive constructions also appear in related work. In \cite{BrancoColacoOjeda23,BrancoColacoOjeda21}, and in subsequent work by Ojeda and Cola\c{c}o \cite{ColacoOjeda25}, such semigroups were studied from structural and algorithmic viewpoints. Independently, in \cite{XIN}, large families of numerical semigroups defined by recursive generating systems were considered, with explicit formulas for invariants such as the Frobenius number, genus, and, in special subfamilies, the type and pseudo-Frobenius numbers. Although approached from a different perspective, many of these constructions overlap conceptually with the affine-closed setting considered here.

The purpose of this paper is to extend the theory of numerical semigroups closed under affine maps to the feasible range where the constant term may be nonpositive. More precisely, we study the smallest numerical semigroup of multiplicity $m$ closed under $T_{a,b}$ when $a\ge 2$, $m>1$, $\gcd(b,m)=1$, and $b\ge -(a-2)m-2$. This includes the classical case $b\ge0$ studied by Ugolini in \cite{Ugo17}, but also allows negative values of $b$, where new phenomena arise. We set
\[
A_i=\frac{a^i-1}{a-1}\qquad(i\ge 0),
\]
and denote by $n$ the smallest positive integer such that $A_n\ge m$.

Our approach is based on an explicit combinatorial description of the Ap\'ery set of $S_{a,b}(m)$. This description yields formulas for the Frobenius number and the genus, implies homogeneity of the semigroups $S_{a,b}(m)$, and allows us to study pseudo-Frobenius numbers uniformly for positive and negative values of $b$. It also leads to a complete characterization of the symmetric members of the family, giving an explicit infinite class of type-one numerical semigroups, equivalently symmetric semigroups, and hence of Gorenstein monomial curves.

The paper is organized as follows. In Section~\ref{Sect:2}, we first determine which affine maps can preserve numerical semigroups and then prove that, under the feasible hypotheses, $S_{a,b}(m)$ is the smallest numerical semigroup of multiplicity $m$ closed under $T_{a,b}$; this is Theorem~\ref{th:1}. For $b>0$, this recovers the affine-closed numerical semigroups studied by Ugolini \cite{Ugo17}, and it also falls into the one-map case of the $(a,b)$-monoids introduced by Robles-P\'erez and Rosales \cite{RoblesRosales18}. The same construction extends here to admissible negative values of $b$.

Section~\ref{Sect:e} is devoted to the embedding dimension. We show that the first $n$ affine iterates always belong to the minimal system of generators, and we give a simple sufficient condition ensuring that there are no further minimal generators; see Proposition~\ref{prop:3} and Corollary~\ref{cor:e=n}. This condition is automatic under the standing hypothesis $b\ge -(a-2)m-2$, as observed in Remark~\ref{rem:e=n}.

In Section~\ref{sec:apery}, we compute the Ap\'ery set explicitly. The main result is Theorem~\ref{Th:Ap}, which describes $\operatorname{Ap}(S)$ in terms of canonical representatives in $R(a,n)$ (see Definition~\ref{def:conj_R}). This leads to the parametrization
\[
\operatorname{Ap}(S)=\{d\ell+\mathsf m_A(\ell)m\mid 0\le \ell\le m-1\}
\]
in Corollary~\ref{cor.OP}. As consequences, we obtain homogeneity in the sense of Jafari and Zarzuela \cite{JafariZarzuela18}, as well as formulas for the Frobenius number and the genus in Corollary~\ref{cor:FrobGen}. We also isolate in Corollary~\ref{cor:F_at_m-1} the precise endpoint condition under which the Frobenius formula from the nonnegative case of Ugolini \cite{Ugo17} remains valid. The explicit description of $\operatorname{Ap}(S)$ can be viewed as an analogue of the shifted-family Ap\'ery description of O'Neill and Pelayo \cite{ONeillPelayo18}.

Section~\ref{sec:PF} studies pseudo-Frobenius numbers using the Ap\'ery parametrization. Proposition~\ref{prop:PF2} gives a first finite set of candidates, while Theorem~\ref{th:maxcrit_mA} provides an intrinsic criterion for deciding when a given element $W(\ell)$ is maximal in the Ap\'ery poset. This criterion implies that all maximal elements lie in two narrow bands determined by the last coordinate of the canonical representative; see Corollary~\ref{cor:two_layers} and Corollary~\ref{cor:two_bands}.

In Section~\ref{sec:symmetric_family}, we characterize the symmetric members of the family. The main result, Theorem~\ref{th:type-one}, says that $S_{a,b}(m)$ is symmetric, equivalently of type one, precisely when either $n=2$ or the canonical representative of $m-1$ has the form
\[
(a,a-1,\ldots,a-1,c)
\]
with $1\le c\le a-1$. This gives an explicit infinite class of type-one numerical semigroups, equivalently symmetric semigroups, and hence of Gorenstein monomial curves; see, for instance, Barucci, Dobbs, and Fontana~\cite{BDF97}, and Gimenez and Srinivasan~\cite{GimenezSrinivasan14}.

Section~\ref{sec:type_bound} proves the sharp upper bound $\operatorname{t}(S)\le n-1$. The proof is purely combinatorial and is based on the Ap\'ery-poset criterion from Section~\ref{sec:PF}, together with an induction on the number of available Ap\'ery weights.

Finally, Section~\ref{sec:arith_pf_family} exhibits a family attaining the upper bound for the type. For $m=cA_{n-1}+1$, with $1\le c\le a$, Theorem~\ref{th:arith_family_PF} proves that
\[
\operatorname{PF}(S)=\{\alpha,\alpha-b,\alpha-2b,\ldots,\alpha-(n-2)b\}
\]
for a suitable integer $\alpha$; in particular, $\operatorname{t}(S)=n-1$. Hence, by Theorem~\ref{thm:type-bound}, this family is extremal. We also explain how it sits inside the Collection CNS of~\cite{XIN}, while not being directly covered by the available results there on pseudo-Frobenius numbers and type. The section ends with a determinantal remark: a natural matrix $X_c$ gives candidates for the defining equations of the associated toric ideal, recovering the generalized repunit case of Branco, Cola\c{c}o, and Ojeda~\cite{BrancoColacoOjeda21} when $c=a$; the corresponding Eagon--Northcott resolution is related to the work of Cola\c{c}o and Ojeda~\cite{ColacoOjeda25}.

Unless otherwise stated, all computations in the examples were performed with the aid of the \texttt{GAP} \cite{GAP} package \texttt{NumericalSgps} \cite{numericalsgps}.

\section{Numerical semigroups closed under affine maps}\label{Sect:2}

Let $T$ be an affine map on the integers, that is, a map
$T:\mathbb{Z}\to\mathbb{Z}$ of the form $T(z)=az+b$ for some
$a,b\in\mathbb{Z}$. We write $T_{a,b}(z)=az+b$.

\begin{definition}
A numerical semigroup $S$ is said to be closed under the affine map
$T_{a,b}$ if $T_{a,b}(S\setminus\{0\})\subseteq S\setminus\{0\}$.
Equivalently, $as+b\in S\setminus\{0\}$ for every
$s\in S\setminus\{0\}$.
\end{definition}

We first record two elementary necessary conditions on the coefficients
of an affine map preserving a numerical semigroup.

\begin{proposition}
Let $S$ be a numerical semigroup and let $a,b\in\mathbb{Z}$. If $S$ is closed under $T_{a,b}$, then $a\ge 0$ and $b\ge -(a-1)\operatorname{m}(S)$.
\end{proposition}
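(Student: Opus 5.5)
We argue directly from the definition, writing $m=\operatorname{m}(S)$. The idea is to test closure on two kinds of elements of $S\setminus\{0\}$: arbitrarily large ones, which force $a\ge 0$, and the multiplicity itself, which forces the bound on $b$.

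First we show $a\ge 0$. Suppose instead that $a<0$. Since $\mathbb N\setminus S$ is finite, $S$ contains every integer $s\ge \operatorname{F}(S)+1$. For such $s$ we have $as+b\le -s+b$. Taking $s>\max\{\operatorname{F}(S),b\}$ with $s\in S\setminus\{0\}$ gives $as+b<0$. But then $as+b\notin S\subseteq\mathbb N$, which contradicts closure. Hence $a\ge 0$.

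Next we bound $b$ by applying closure to $s=m\in S\setminus\{0\}$. This gives $am+b\in S\setminus\{0\}$. Every nonzero element of $S$ is at least $m$ by the definition of multiplicity, so $am+b\ge m$, that is, $b\ge -(a-1)m$.

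No step is a real obstacle. The only care needed is in the first step: we must use the cofiniteness of $S$ to pick an element large enough that $as+b$ is negative, whatever the sign of $b$.
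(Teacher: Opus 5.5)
Your proof is correct and takes essentially the same approach as the paper. For $a<0$ you use cofiniteness to choose a large $s\in S\setminus\{0\}$ with $as+b<0$; the paper takes $s>b/(-a)$, while you take $s>\max\{\operatorname{F}(S),b\}$ and use $as\le -s$. You then apply closure to $m$ itself to get $am+b\ge m$, exactly as the paper does, though you argue directly rather than by contradiction.
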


\begin{proof}
Let $m=\operatorname{m}(S)$. Since $S$ is a numerical semigroup, there
exists $N$ such that every integer $s\ge N$ belongs to $S$. If
$a\le -1$, choose $s\in S$ with $s>b/(-a)$. Then
$T_{a,b}(s)=as+b<0$, contradicting
$T_{a,b}(S\setminus\{0\})\subseteq S\setminus\{0\}\subseteq\mathbb{N}$.
Hence $a\ge 0$.

Now suppose that $b<-(a-1)m$. Then $T_{a,b}(m)=am+b<m$. Since $m$ is
the smallest positive element of $S$, this implies
$T_{a,b}(m)\notin S\setminus\{0\}$, again a contradiction. Therefore
$b\ge -(a-1)m$.
\end{proof}

Let $a,b,m\in\mathbb{Z}$ with $m>0$. We denote by $T^0_{a,b}$ the
identity map and by $T^i_{a,b}$ the $i$-fold composition of $T_{a,b}$.
The orbit of $m$ under $T_{a,b}$ is the sequence
$\Gamma=(T^i_{a,b}(m))_{i\ge 0}$. A direct computation gives
\[
T^i_{a,b}(m)=
\begin{cases}
a^im+\dfrac{a^i-1}{a-1}b, & \text{if } a\ne 1,\\[8pt]
m+ib, & \text{if } a=1.
\end{cases}
\]

For $a\ne 1$, set $d=(a-1)m+b$. Then the preceding formula can be
written as
\begin{equation}\label{eq:orbit_formula_d}
T^i_{a,b}(m)=m+\frac{a^i-1}{a-1}d,\qquad i\ge 0.
\end{equation}
Thus, when $a\ge 1$, the condition $b\ge -(a-1)m$ is equivalent to
$d\ge 0$.

\begin{lemma}\label{lem:orbit_increasing}
Let $a,b,m\in\mathbb{Z}$ with $a\ge 1$, $m>0$, and
$b\ge -(a-1)m$. Then $0<m\le T^i_{a,b}(m)\le T^{i+1}_{a,b}(m)$ for
every $i\ge 0$. Moreover, if $b>-(a-1)m$, then the sequence
$(T^i_{a,b}(m))_{i\ge 0}$ is strictly increasing.
\end{lemma}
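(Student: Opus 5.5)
The plan is to prove the statement directly from the closed forms for the orbit, treating $a=1$ and $a\ge 2$ separately. Set $d=(a-1)m+b$. The hypothesis $b\ge -(a-1)m$ says exactly that $d\ge 0$, and $b>-(a-1)m$ says $d>0$.

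For $a\ge 2$, I will use \eqref{eq:orbit_formula_d}, namely $T^i_{a,b}(m)=m+A_i d$ with $A_i=(a^i-1)/(a-1)=1+a+\cdots+a^{i-1}$. Since $A_0=0$ and $A_{i+1}-A_i=a^i>0$, the sequence $(A_i)$ is nonnegative and strictly increasing. Multiplying by $d\ge 0$ gives
\[
m=m+A_0d\le m+A_id\le m+A_{i+1}d .
\]
Hence $0<m\le T^i_{a,b}(m)\le T^{i+1}_{a,b}(m)$. When $d>0$ the last inequality is strict, because $a^i d>0$.

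For $a=1$, the orbit is $T^i_{1,b}(m)=m+ib$ and the hypothesis reads $b\ge 0$. The difference of consecutive terms is $b$, so the sequence is nondecreasing and starts at $m>0$. If $b>0$, it is strictly increasing.

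An alternative argument avoids closed forms and works for every $a\ge 1$. Observe that
\[
T_{a,b}(z)-z=(a-1)z+b\ge (a-1)m+b=d\ge 0 \quad\text{whenever } z\ge m .
\]
Then induct on $i$: if $T^i_{a,b}(m)\ge m$, then $T^{i+1}_{a,b}(m)\ge T^i_{a,b}(m)+d\ge T^i_{a,b}(m)$, with strict inequality when $d>0$.

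There is no real obstacle here. The only point needing care is that $a=1$ must be handled separately, since \eqref{eq:orbit_formula_d} is stated only for $a\ne 1$. The inductive argument covers that case uniformly.
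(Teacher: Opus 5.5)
Your proof is correct and follows essentially the paper's argument: you split into $a=1$ (where $T^i_{1,b}(m)=m+ib$ with $b\ge 0$) and $a\ge 2$ (where $T^i_{a,b}(m)=m+A_id$ with $d\ge 0$), and your observation $A_{i+1}-A_i=a^i>0$ spells out the monotonicity that the paper leaves implicit. Your alternative inductive argument, based on $T_{a,b}(z)-z\ge d$ for $z\ge m$, is also valid and treats all $a\ge 1$ uniformly without needing the closed form.
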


\begin{proof}
If $a=1$, then $b\ge 0$ and $T^i_{1,b}(m)=m+ib$, so the assertion is
immediate. Assume now that $a>1$. By \eqref{eq:orbit_formula_d},
$T^i_{a,b}(m)=m+\frac{a^i-1}{a-1}d$. Since $d\ge 0$, the sequence is
nondecreasing and all its terms are at least $m>0$. If
$b>-(a-1)m$, then $d>0$, and the inequalities between consecutive terms
are strict.
\end{proof}

We now introduce the semigroups generated by such orbits. Let
$S_{a,b}(m)=\langle T^i_{a,b}(m)\mid i\ge 0\rangle$. Under the
hypotheses $a\ge 1$, $m>1$, $b\ge -(a-1)m$, and $\gcd(b,m)=1$, one has
$b\ne -(a-1)m$; indeed, equality would imply $m\mid b$, contradicting
$\gcd(b,m)=1$. Hence $b>-(a-1)m$, and Lemma~\ref{lem:orbit_increasing}
shows that the orbit of $m$ is a strictly increasing sequence of
positive integers.

Moreover,
$\gcd(T^i_{a,b}(m)\mid i\ge 0)=\gcd(m,b)$. Indeed, each
$T^i_{a,b}(m)$ belongs to the subgroup of $\mathbb{Z}$ generated by
$m$ and $b$, while $m=T^0_{a,b}(m)$ and
$b=T_{a,b}(m)-am$. Therefore $S_{a,b}(m)$ is a numerical semigroup if
and only if $\gcd(b,m)=1$.

The cases $a=0$ and $a=1$ are classical. For $a=0$, the construction
gives numerical semigroups of embedding dimension two, whereas for
$a=1$ it gives numerical semigroups of maximal embedding dimension,
provided that $\gcd(b,m)=1$. These cases have been extensively studied
(see, for instance, \cite{libro}). Also, $S_{a,b}(1)=\mathbb{N}$.
Consequently, in the rest of this section we focus on the case
$a\ge 2$ and $m>1$.

The following theorem extends \cite[Theorem~3.1]{Ugo17}, where the case
$b>0$ was considered, to all admissible values of $b$.

\begin{theorem}\label{th:1}
Let $a,b\in\mathbb{Z}$ and let $m>1$. Assume that $a\ge 2$,
$b\ge -(a-1)m$, and $\gcd(b,m)=1$. Then $S_{a,b}(m)$ is the smallest
numerical semigroup of multiplicity $m$ closed under $T_{a,b}$.
\end{theorem}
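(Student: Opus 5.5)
The proof has three parts: $S_{a,b}(m)$ is a numerical semigroup of multiplicity $m$, it is closed under $T_{a,b}$, and it is contained in every numerical semigroup of multiplicity $m$ closed under $T_{a,b}$. Write $S=S_{a,b}(m)$ and $T=T_{a,b}$.

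\emph{Numerical semigroup of multiplicity $m$.} The discussion preceding the theorem shows that $\gcd(T^i(m)\mid i\ge 0)=\gcd(b,m)=1$, so $S$ is a numerical semigroup. Since $\gcd(b,m)=1$ forces $b>-(a-1)m$, Lemma~\ref{lem:orbit_increasing} shows that the orbit is strictly increasing with first term $m$. Every nonzero element of $S$ is a sum of at least one generator, and each generator is at least $m$. Since $m\in S$, we get $\operatorname{m}(S)=m$.

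\emph{Closure.} Let $s\in S\setminus\{0\}$ and write $s=g_1+g_2+\cdots+g_k$ with $k\ge 1$ and each $g_j=T^{i_j}(m)$ an orbit element. Then
\[
T(s)=as+b=(ag_1+b)+a(g_2+\cdots+g_k)=T^{i_1+1}(m)+a(g_2+\cdots+g_k).
\]
The first summand is a generator of $S$. The second is a nonnegative integer combination of generators, because $a\ge 2>0$. Hence $T(s)\in S$. Moreover $T(s)\ge T^{i_1+1}(m)\ge m>0$ by Lemma~\ref{lem:orbit_increasing}, so $T(s)\in S\setminus\{0\}$.

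The only point needing care is that $b$ may be negative, so one cannot simply argue that $as+b\ge as$. The decomposition above avoids this: the constant $b$ is absorbed exactly once, into a single generator, which produces another orbit element. The remaining factor $a$ multiplies elements of $S$ and is harmless.

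\emph{Minimality.} Let $S'$ be any numerical semigroup with $\operatorname{m}(S')=m$ that is closed under $T$. Then $m\in S'\setminus\{0\}$. By induction on $i$, closure gives $T^i(m)\in S'\setminus\{0\}$ for all $i\ge 0$. Since $S'$ is a monoid, it contains the monoid generated by these elements, that is, $S\subseteq S'$.

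Hence $S_{a,b}(m)$ is the smallest numerical semigroup of multiplicity $m$ closed under $T_{a,b}$.
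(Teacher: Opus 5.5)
Your proposal is correct and follows essentially the same route as the paper. It gets the multiplicity from the increasing orbit, proves closure by absorbing $b$ into a single orbit generator via $aT^{j}(m)+b=T^{j+1}(m)$, and proves minimality by iterating $T$ on $m$ inside any closed semigroup of multiplicity $m$.
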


\begin{proof}
By the previous discussion, $S_{a,b}(m)$ is a numerical semigroup.
Since $b\ge -(a-1)m$, Lemma~\ref{lem:orbit_increasing} shows that the
smallest positive generator in the orbit is $m$. Thus
$\operatorname{m}(S_{a,b}(m))=m$.

We next prove that $S_{a,b}(m)$ is closed under $T_{a,b}$. Let
$s\in S_{a,b}(m)\setminus\{0\}$. Then
$s=\sum_{i=0}^n u_iT^i_{a,b}(m)$ for some $n\ge 0$ and some
$u_0,\ldots,u_n\in\mathbb{N}$, not all zero.

If $u_j>0$ for some $j \geq 0$, then, using
$b=T^{j+1}_{a,b}(m)-aT^j_{a,b}(m)$, we get
\[
\begin{aligned}
T_{a,b}(s)
&=\sum_{i=0}^n au_iT^i_{a,b}(m)+b\\
&=\sum_{i\ne j} au_iT^i_{a,b}(m)
  +a(u_j-1)T^j_{a,b}(m)+T^{j+1}_{a,b}(m).
\end{aligned}
\]
Hence $T_{a,b}(s)\in S_{a,b}(m)\setminus\{0\}$.

Finally, let $S$ be any numerical semigroup of multiplicity $m$ closed
under $T_{a,b}$. Since $m\in S$, closure under $T_{a,b}$ gives
$T^i_{a,b}(m)\in S$ for every $i\ge 0$. Therefore all generators of
$S_{a,b}(m)$ belong to $S$, and so $S_{a,b}(m)\subseteq S$.
\end{proof}

For $b>0$, Theorem~\ref{th:1} recovers the affine-closed numerical
semigroups studied by Ugolini \cite{Ugo17}. These semigroups also appear
in \cite{XIN} (see Collection \texttt{CNS}) after a suitable adjustment
of notation. The construction also corresponds to the one-map case of
the $(a,b)$-monoids introduced by Robles P\'erez and Rosales
\cite{RoblesRosales18}. The point of view adopted here naturally extends
the construction to the admissible nonpositive values of $b$, namely
those satisfying $b\ge -(a-1)m$.

Finally, under the hypotheses of Theorem~\ref{th:1}, set
$d=(a-1)m+b$. Since $m>1$ and $\gcd(b,m)=1$, one has $d>0$, and
\eqref{eq:orbit_formula_d} shows that
$T^i_{a,b}(m)=m+\frac{a^i-1}{a-1}d$ for every $i\ge 0$. Thus the
smallest numerical semigroup of multiplicity $m$ closed under
$T_{a,b}$ belongs to the shifted family in the sense of
\cite{ONeillPelayo18}.

\section{\texorpdfstring{The embedding dimension of $S_{a,b}(m)$}{The embedding dimension}}\label{Sect:e}

Throughout this section, let $a,b,m\in\mathbb{Z}$ with $a\ge 2$,
$m>1$, $\gcd(b,m)=1$, and $b>-(a-1)m$. We write
$S=S_{a,b}(m)$ and $T=T_{a,b}$. By Theorem~\ref{th:1}, $S$ is generated
by the orbit of $m$ under $T$. Thus, if $s_i=T^i(m)$ for $i\ge 0$, then
$S=\langle s_i\mid i\ge 0\rangle$.

Set $d=(a-1)m+b$ and $A_i=(a^i-1)/(a-1)$ for $i\ge 0$. Then $d>0$,
$\gcd(d,m)=1$, and, by \eqref{eq:orbit_formula_d}, one has
$s_i=m+A_id$ for every $i\ge 0$. In particular, the sequence
$(s_i)_{i\ge 0}$ is strictly increasing.

Our first goal is to give a general lower bound for the embedding
dimension of $S$.

\begin{proposition}\label{prop:3}
Let $n$ be the smallest positive integer such that $A_n\ge m$. Then
\[\{s_0,\ldots,s_{n-1}\}\subseteq \operatorname{msg}(S).\] In particular, $\operatorname{e}(S)\ge n$.
\end{proposition}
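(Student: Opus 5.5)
We argue by contradiction. Fix $k$ with $0\le k\le n-1$ and suppose $s_k\notin\operatorname{msg}(S)$. Since the $s_i$ generate $S$ and form a strictly increasing sequence, $s_k$ would then be a sum of at least two nonzero elements of $S$. Hence it can be written as
\[
s_k=\sum_{i=0}^{k-1}u_is_i,\qquad u_i\in\mathbb N,\quad \sum_i u_i\ge 2 .
\]
Only indices $i<k$ can occur, because every $s_i$ with $i\ge k$ is already at least $s_k$. The case $k=0$ is immediate: $s_0=m$ is the multiplicity, so it is not a sum of two positive elements of $S$. From now on we assume $1\le k\le n-1$.

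Using $s_i=m+A_id$, the relation becomes
\[
m+A_kd=Um+\Big(\sum_{i<k}u_iA_i\Big)d,\qquad U=\sum_i u_i\ge 2 .
\]
Rearranging gives
\[
(U-1)m=\Big(A_k-\sum_i u_iA_i\Big)d .
\]
Since $\gcd(d,m)=1$ and $U-1\ge 1$, the integer $A_k-\sum_iu_iA_i$ is positive and divisible by $m$. In particular it is at least $m$. On the other hand, $A_k-\sum_iu_iA_i\le A_k$, because each $A_i\ge 0$. Therefore $A_k\ge m$. But $k\le n-1<n$, and by the minimality of $n$ we have $A_k<m$, which is a contradiction.

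The one point that needs care is the sign of $A_k-\sum u_iA_i$. It must be strictly positive, and this follows because the left-hand side $(U-1)m$ is positive and $d>0$. Once the sign is settled, divisibility by $m$ gives the bound at least $m$, and minimality of $n$ finishes the argument. Thus $s_0,\dots,s_{n-1}$ are all minimal generators. They are pairwise distinct because the sequence is strictly increasing, so $\operatorname{e}(S)\ge n$.
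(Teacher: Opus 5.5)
Your proof is correct, but its key step differs from the paper's. Both arguments start from the relation $m+A_kd=Um+\bigl(\sum_i u_iA_i\bigr)d$ given by a factorization of $s_k$, and both rely on $\gcd(m,d)=1$ and $A_k<m$.

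The paper reduces this relation modulo $d$ and obtains $U\equiv 1\pmod d$. It then needs several auxiliary steps:
\begin{itemize}
\item it rules out $U=1$ using strict monotonicity;
\item it rules out factorizations using only $s_0$ via $m\nmid A_k$;
\item it concludes by a size comparison, $s_k>Um\ge(1+d)m>m+dA_{n-1}=s_{n-1}$, which contradicts $k\le n-1$.
\end{itemize}

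You read the same relation modulo $m$ instead. Then $A_k-\sum_iu_iA_i$ is a positive multiple of $m$ bounded above by $A_k<m$, which is an immediate contradiction. This is shorter and avoids the auxiliary cases.

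Your argument does not even need the restriction to indices $i<k$. The inequality $\sum_iu_iA_i\ge 0$ holds for any factorization with $U\ge 2$, whatever indices occur. For the same reason you do not need a separate treatment of $k=0$, since $A_0=0$. So the monotonicity of the $s_i$ is only used to see that they are pairwise distinct.

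The paper's modulo-$d$ route yields the extra length information $U\ge d+1$, but this is not needed for the statement.
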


\begin{proof}
Since the sequence $(s_i)_{i\ge 0}$ is strictly increasing, no generator
$s_j$ with $j>i+1$ can occur in a factorization of $s_{i+1}$. Hence, by
\cite[Corollary~2.9]{libro}, it is enough to prove that
$s_{i+1}\notin\langle s_0,\ldots,s_i\rangle$ for
$i=0,\ldots,n-2$.

Suppose, to the contrary, that
$s_{i+1}\in\langle s_0,\ldots,s_i\rangle$ for some
$i\in\{0,\ldots,n-2\}$. Then there exist
$u_0,\ldots,u_i\in\mathbb{N}$ such that
\[
m+A_{i+1}d
=
s_{i+1}
=
\sum_{j=0}^i u_js_j
=
\left(\sum_{j=0}^i u_j\right)m
+
\left(\sum_{j=0}^i u_jA_j\right)d.
\]
Reducing modulo $d$ and using $\gcd(m,d)=1$, we obtain
$\sum_{j=0}^i u_j\equiv 1\pmod d$. Hence
$\sum_{j=0}^i u_j=1+dk$ for some $k\in\mathbb{N}$.

We claim that $k\ge 1$. Indeed, if $k=0$, then
$\sum_{j=0}^i u_j=1$, and the equality
$s_{i+1}=\sum_{j=0}^i u_js_j$ would force $s_{i+1}=s_j$ for some
$j\le i$, contradicting the fact that $(s_\ell)_{\ell\ge 0}$ is strictly
increasing.

Moreover, at least one coefficient $u_j$ with $j\ge 1$ is nonzero. If
not, then $s_{i+1}=u_0s_0=u_0m$, and so $m\mid A_{i+1}d$. Since
$\gcd(m,d)=1$, this gives $m\mid A_{i+1}$, which is impossible because
$0<A_{i+1}\le A_{n-1}<m$.

Thus $\sum_{j=0}^i u_j\ge 1+d$, and some $u_j$ with $j\ge 1$ is
positive. Therefore
\[
s_{i+1}
=
\left(\sum_{j=0}^i u_j\right)m
+
\left(\sum_{j=0}^i u_jA_j\right)d
>
\left(\sum_{j=0}^i u_j\right)m
\ge (1+d)m.
\]
Since $A_{n-1}<m$, we have $(1+d)m>m+dA_{n-1}=s_{n-1}$. Hence
$s_{i+1}>s_{n-1}$, contradicting $i+1\le n-1$ and the monotonicity of
the sequence $(s_\ell)_{\ell\ge 0}$. This proves that
$s_{i+1}\notin\langle s_0,\ldots,s_i\rangle$ for all
$i=0,\ldots,n-2$, and the result follows.
\end{proof}

The lower bound in Proposition~\ref{prop:3} is not always sharp. For
instance, if $a=3$, $b=-83$, and $m=42$, then $d=1$ and $S$ is minimally
generated by $\{42,43,46,55,82,163\}$. Hence $\operatorname{e}(S)=6$,
whereas $A_4=40<42<A_5=121$, so Proposition~\ref{prop:3} gives only
$\operatorname{e}(S)\ge 5$.

Nevertheless, the bound is sharp under a mild additional condition.

\begin{corollary}\label{cor:e=n}
Let $n$ be the smallest positive integer such that $A_n\ge m$, and write
$A_n=qm+r$, where $q\in\mathbb{N}\setminus\{0\}$ and
$0\le r\le m-1$. If
\[
b\ge -(a-1)m+\left\lceil\frac{r-1}{q}\right\rceil,
\]
then $\operatorname{msg}(S)=\{s_0,\ldots,s_{n-1}\}$ and
$\operatorname{e}(S)=n$.
\end{corollary}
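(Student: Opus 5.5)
The plan is to show that $s_n\in\langle s_0,\ldots,s_{n-1}\rangle$, to propagate this to every $s_k$ with $k\ge n$ using the closure argument from the proof of Theorem~\ref{th:1}, and then to invoke Proposition~\ref{prop:3} for minimality.

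\emph{Step 1: $s_n$ is not a new generator.} Since $m>1=A_1$, we have $n\ge 2$, so $s_1$ lies among $s_0,\ldots,s_{n-1}$. Write $A_n=qm+r$. I would use the explicit candidate
\[
s_n = r\,s_1 + (1+qd-r)\,s_0 .
\]
It has the right value, because $r(m+d)+(1+qd-r)m=(1+qd)m+rd=m+(qm+r)d=m+A_nd=s_n$. The only issue is that the coefficient $1+qd-r$ must be nonnegative, that is, $d\ge (r-1)/q$. Since $d$ is an integer, this is equivalent to $d\ge\lceil (r-1)/q\rceil$. As $d=(a-1)m+b$, this is exactly the hypothesis $b\ge -(a-1)m+\lceil (r-1)/q\rceil$. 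This identification is the heart of the argument. (When $r=0$ the condition is vacuous, and indeed $s_n=(1+qd)s_0$.)

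\emph{Step 2: induction on $k\ge n$.} Let $S'=\langle s_0,\ldots,s_{n-1}\rangle$. Assume $s_k\in S'$ and write $s_k=\sum_{i\le n-1}u_is_i$ with some $u_j>0$. As in the proof of Theorem~\ref{th:1}, using $b=s_{j+1}-as_j$ we get
\[
s_{k+1}=T(s_k)=\sum_{i\ne j}au_is_i+a(u_j-1)s_j+s_{j+1}.
\]
Here $j+1\le n$. If $j+1\le n-1$, every term on the right lies in $S'$. If $j+1=n$, Step 1 shows $s_n\in S'$. Hence $s_{k+1}\in S'$, and so $S=S'$.

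\emph{Step 3: minimality.} Proposition~\ref{prop:3} gives $\{s_0,\ldots,s_{n-1}\}\subseteq\operatorname{msg}(S)$. Together with $S=\langle s_0,\ldots,s_{n-1}\rangle$, this yields equality and $\operatorname{e}(S)=n$.

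The main obstacle is finding the explicit representation in Step 1 and checking that its nonnegativity condition matches the stated bound exactly. Once that is done, the rest is routine.
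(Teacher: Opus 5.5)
Your proof is correct and matches the paper's argument: the same identity $s_n=(1+qd-r)s_0+rs_1$ with the same nonnegativity check, the same substitution $b=s_{j+1}-as_j$, and Proposition~\ref{prop:3} for minimality. The only difference is cosmetic: you apply the substitution inductively along the orbit $s_k$, $k\ge n$, whereas the paper shows that all of $\langle s_0,\ldots,s_{n-1}\rangle$ is closed under $T$ and then invokes the minimality statement of Theorem~\ref{th:1}.
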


\begin{proof}
Let $M=\langle s_0,\ldots,s_{n-1}\rangle$. Since $s_0=m$ and
$s_1=m+d$, we have $\gcd(s_0,s_1)=1$. Thus $M$ is a numerical semigroup.
Moreover, $M$ has multiplicity $m$, because $m\in M$ and all generators
$s_0,\ldots,s_{n-1}$ are at least $m$.

We first prove that $s_n\in M$. Since $s_0=m$, $s_1=m+d$, and
$A_n=qm+r$, we have
\[
s_n
=
m+A_nd
=
m+(qm+r)d
=
(dq+1-r)s_0+rs_1.
\]
By hypothesis, $d=(a-1)m+b\ge \lceil (r-1)/q\rceil$, and hence
$dq+1-r\ge 0$. Therefore $s_n\in M$.

We now show that $M$ is closed under $T$. Let
$s=\sum_{i=0}^{n-1}u_is_i\in M\setminus\{0\}$. If $s=u_0s_0$ with
$u_0>0$, then $T(s)=a(u_0-1)s_0+s_1\in M$. Otherwise, choose
$j\in\{1,\ldots,n-1\}$ with $u_j>0$. Since $b=s_{j+1}-as_j$, and since
$s_n\in M$ when $j=n-1$, we get
\[
T(s)
=
\sum_{i\ne j} au_is_i+a(u_j-1)s_j+s_{j+1}\in M.
\]
Thus $M$ is closed under $T$.

By Theorem~\ref{th:1}, $S$ is the smallest numerical semigroup of
multiplicity $m$ closed under $T$. Hence $S\subseteq M$. The reverse
inclusion is clear, because $M$ is generated by elements of $S$.
Therefore $S=M$.

Finally, Proposition~\ref{prop:3} gives
$\{s_0,\ldots,s_{n-1}\}\subseteq \operatorname{msg}(S)$, while
$S=\langle s_0,\ldots,s_{n-1}\rangle$. Hence
$\operatorname{msg}(S)=\{s_0,\ldots,s_{n-1}\}$ and
$\operatorname{e}(S)=n$.
\end{proof}

If $A_n\equiv 0\pmod m$ or $A_n\equiv 1\pmod m$, that is, if
$r=0$ or $r=1$, then $\lceil (r-1)/q\rceil\le 0$. Hence the hypothesis
of Corollary~\ref{cor:e=n} is automatically satisfied, and consequently
$\operatorname{e}(S)=n$ in these cases.

\begin{remark}\label{rem:e=n}
Let $n$ be the smallest positive integer such that $A_n\ge m$, and write
$A_n=qm+r$ with $q\ge 1$ and $0\le r\le m-1$. Since
$\lceil (r-1)/q\rceil\le r-1\le m-2$, the uniform condition
$d\ge m-2$ implies the hypothesis of Corollary~\ref{cor:e=n}. Therefore
$\operatorname{e}(S)=n$ whenever $d\ge m-2$. Equivalently, since
$d=(a-1)m+b$, this condition can be written as
$b\ge -(a-2)m-2$.
\end{remark}

\section{The Apéry set}\label{sec:apery}

Throughout this section, let $a,b,m\in\mathbb{Z}$ with $a\ge 2$,
$m>1$, $\gcd(b,m)=1$, and $b\ge -(a-2)m-2$. We write
$S=S_{a,b}(m)$.

As before, set $d=(a-1)m+b$ and, for $i\ge 0$, set
$A_i=(a^i-1)/(a-1)$. Thus $A_0=0$, and
$A_i=1+a+\cdots+a^{i-1}$ for $i\ge 1$. By
\eqref{eq:orbit_formula_d}, the generators arising from the orbit of
$m$ are $s_i=m+dA_i$, $i\ge 0$.

Let $n$ be the smallest positive integer such that $A_n\ge m$. Since
$d\ge m-2$, Remark~\ref{rem:e=n} applies. Therefore
$\operatorname{msg}(S)=\{s_0,\ldots,s_{n-1}\}$, and
$S=\langle s_0,\ldots,s_{n-1}\rangle$, where
$s_i=m+dA_i$ for $i=0,\ldots,n-1$. 

\subsection{Description of the Ap\'ery set}

We compute the Ap\'ery set of $S$ with respect to $m$, that is,
\[
\operatorname{Ap}(S,m):=\{s\in S \mid s-m\notin S\};
\] 
for simplicity, we write $\operatorname{Ap}(S)=\operatorname{Ap}(S,m)$. To this end, we first introduce an auxiliary set of exponent vectors.

\begin{definition}\label{def:conj_R}
Given an integer $i>1$, we define $R(a,i)$ to be the subset of $\mathbb{N}^{\,i-1}$ whose elements
$\mathbf{u}=(u_1,\ldots,u_{i-1})$ satisfy
\begin{itemize}
\item[(a)] $0\le u_j\le a$ for every $j=1,\ldots,i-1$;
\item[(b)] if $u_j=a$, then $u_k=0$ for every $k<j$.
\end{itemize}
\end{definition}

With this notation, define
\[
\mathcal{R}_m(a):=\left\{\mathbf{u}=(u_1,\ldots,u_{n-1})\in R(a,n)\ \middle|\ \sum_{i=1}^{n-1} u_i\,A_i < m\right\}.
\]

\begin{theorem}\label{Th:Ap}
With the above notation, we have
\[
\operatorname{Ap}(S)=\left\{\sum_{i=1}^{n-1} u_i s_i\ \middle|\ (u_1,\ldots,u_{n-1})\in \mathcal{R}_m(a)\right\}.
\]
\end{theorem}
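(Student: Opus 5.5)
The plan is to use the standard characterization of $\operatorname{Ap}(S)$: it is the set of minimal elements of $S$ in each of the $m$ residue classes modulo $m$. For $\mathbf u\in R(a,n)$ write
\[
|\mathbf u|=\sum_{i=1}^{n-1}u_i,\qquad \ell(\mathbf u)=\sum_{i=1}^{n-1}u_iA_i .
\]
Since $s_i=m+dA_i$, we have
\[
w(\mathbf u):=\sum_{i=1}^{n-1}u_is_i=|\mathbf u|\,m+\ell(\mathbf u)\,d .
\]
Because $\gcd(d,m)=1$, the residue of $w(\mathbf u)$ modulo $m$ is determined by $\ell(\mathbf u)\bmod m$. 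So it suffices to prove two things: the map $\mathbf u\mapsto \ell(\mathbf u)$ is a bijection $\mathcal R_m(a)\to\{0,\dots,m-1\}$, and each $w(\mathbf u)$ is the smallest element of $S$ in its class.

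\textbf{Step 1 (canonical representatives).} I will show that every integer $\ell$ with $0\le \ell< A_n$ has a unique representation $\ell=\sum_{i=1}^{n-1}u_iA_i$ with $(u_i)\in R(a,n)$. The key identity is $A_{i+1}=aA_i+1$. The representation is built greedily from the top index: take $u_{n-1}=\lfloor \ell/A_{n-1}\rfloor\le a$ and recurse on the remainder. If $u_j=a$, the remainder is $<A_{j+1}-aA_j=1$, hence $0$, which gives condition (b). Uniqueness follows by counting: one checks $|R(a,i+1)|=A_{i+1}$ by induction. Alternatively, any element of $R(a,j+1)$ satisfies $\sum_{i\le j}u_iA_i\le A_{j+1}-1$, so the top digit is forced. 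Since $m\le A_n$, restricting to $\ell<m$ gives the desired bijection with $\mathcal R_m(a)$.

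\textbf{Step 2 (minimal digit sum).} Define $\sigma(L)=\min\{\sum_{i\ge1}v_i : \sum_{i\ge1} v_iA_i=L,\ v_i\in\mathbb N\}$. I will show that the canonical representative attains $\sigma(L)$ for $L<A_n$. The argument uses exchange moves that never increase the coefficient sum. The move $(a+1)A_i=A_{i+1}+A_1\cdot(\dots)$ is not directly available, so instead I use $aA_i+A_1=A_{i+1}$, which strictly decreases the sum, and $(a+1)A_i=A_{i+1}+A_i-1$ when needed. Together these show that a sum-minimizing representation can be taken to satisfy (a) and (b), so it is canonical by Step 1. This combinatorial step is the one I expect to be the main obstacle. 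In particular, I need to handle carefully the case of a digit $a$ at position $j$ with nonzero lower digits. Here $aA_j+A_k$ should be replaced by $A_{j+1}+(A_k-1)$, and $A_k-1=aA_{k-1}$ must be re-expressed without increasing the total, by induction on $k$.

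\textbf{Step 3 (minimality in the residue class).} Take any $s=\sum_{i=0}^{n-1}v_is_i\in S$ with $s\equiv w(\mathbf u)\pmod m$. Writing $L=\sum_{i\ge1}v_iA_i$, we get $L\equiv \ell:=\ell(\mathbf u)\pmod m$ and $L\ge0$, so $L=\ell+km$ with $k\ge0$. Then
\[
s=\Bigl(\sum_i v_i\Bigr)m+\ell d+kmd .
\]
If $k=0$, Step 2 gives $\sum_{i\ge1}v_i\ge |\mathbf u|$, so $s\ge w(\mathbf u)$. If $k\ge1$, then $L>0$, so $\sum_{i\ge1} v_i\ge1$. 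Moreover $|\mathbf u|=\sigma(\ell)\le \ell\le m-1\le d+1$, where the first inequality uses $A_1=1$ and the last uses the standing hypothesis $d\ge m-2$ (Remark~\ref{rem:e=n}). Hence $\sum_i v_i+kd\ge 1+d\ge|\mathbf u|$, and again $s\ge w(\mathbf u)$.

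So each $w(\mathbf u)$ is the minimum of $S$ in its residue class, and the $m$ classes are all covered by Step 1. Therefore $\operatorname{Ap}(S)=\{w(\mathbf u)\mid \mathbf u\in\mathcal R_m(a)\}$.
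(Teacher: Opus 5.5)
Your proposal is correct and follows essentially the paper's route. You use greedy canonical representatives (Proposition~\ref{prop:canonical}), minimality of their coefficient sum via the exchanges $aA_j+A_k=A_{j+1}+aA_{k-1}$ and $(a+1)A_j=A_{j+1}+aA_{j-1}$ (Lemma~\ref{lem:grevlex}), and the split $k=0$ versus $k\ge 1$ using $\mathsf m_A(\ell)\le \ell\le m-1\le d+1$; your Step~3 differs only in being a direct comparison $s\ge w(\mathbf u)$ instead of the paper's contradiction $w-m\in S$. The one point to make explicit is termination in Step~2: these moves usually preserve the coefficient sum, so you need a secondary well-founded measure. The paper notes that each move is a strict grevlex decrease because coordinate $j+1$ goes up; alternatively, you can choose, among the finitely many minimal-sum representations, one that is largest when comparing coordinates from the right.
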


To prove this theorem, we first need two preparatory results. The following result appears in \cite[Theorem~3.4]{Ugo17}; for the sake
of completeness, we include a proof.

\begin{proposition}\label{prop:canonical}
Let $n>1$ be an integer. For each $\ell\in[0,A_n)\cap\mathbb{N}$ there exists a unique
$\mathbf{u}^{(\ell)}=(u^{(\ell)}_1,\ldots,u^{(\ell)}_{n-1})\in R(a,n)$ such that
\[
\ell=\sum_{i=1}^{n-1}u^{(\ell)}_iA_i.
\]
\end{proposition}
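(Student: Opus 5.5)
The plan is to prove existence and uniqueness together by induction on $n$, using the recursion $A_{n}=aA_{n-1}+1$, together with a counting argument. For $n=2$ we have $R(a,2)=\{(u_1)\mid 0\le u_1\le a\}$ and $A_1=1$, $A_2=a+1$. Hence $\ell=u_1$ is the unique representation for $0\le \ell\le a$, and the base case is immediate.

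For the inductive step I would first establish a bound. For every $\mathbf u\in R(a,n)$,
\[
\sum_{i=1}^{n-1}u_iA_i\le A_n-1 .
\]
To see this, let $j$ be the largest index with $u_j=a$, if there is one. Then $u_k=0$ for $k<j$, and $u_k\le a-1$ for $k>j$. Therefore the sum is at most
\[
aA_j+(a-1)(A_{j+1}+\cdots+A_{n-1}).
\]
This expression telescopes to $A_n-1$, because $(a-1)A_k=A_{k+1}-A_k-\ldots$ or, more simply, $(a-1)A_k+1=a^k$ together with $aA_j=A_{j+1}-1$. If no $u_j$ equals $a$, the sum is at most $(a-1)\sum_{k=1}^{n-1}A_k=A_n-n<A_n$.

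Next comes the splitting step. Given $\mathbf u\in R(a,n+1)$, I would split on the last coordinate $u_n\in\{0,\dots,a\}$.
\begin{itemize}
\item If $u_n\le a-1$, the truncation $(u_1,\dots,u_{n-1})$ lies in $R(a,n)$. Its value lies in $[0,A_n)$ by the bound, so the total lies in $[u_nA_n,(u_n+1)A_n)$.
\item If $u_n=a$, all earlier coordinates vanish and the value is $aA_n=A_{n+1}-1$.
\end{itemize}
Since $A_{n+1}=aA_n+1$, these intervals partition $[0,A_{n+1})$ exactly.

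Given $\ell<A_{n+1}$, I then proceed as follows.
\begin{itemize}
\item If $\ell=aA_n$, take $\mathbf u=(0,\dots,0,a)$.
\item Otherwise, write $\ell=u_nA_n+\ell'$ with $u_n=\lfloor \ell/A_n\rfloor\le a-1$ and $0\le\ell'<A_n$. Apply the induction hypothesis to $\ell'$, and append $u_n$.
\end{itemize}
Appending $u_n\le a-1$ preserves condition (b), which gives existence. For uniqueness, the intervals above are disjoint, so $u_n$ is forced by $\ell$. The remaining coordinates then represent $\ell'$, where uniqueness in $R(a,n)$ applies.

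The main obstacle is the bound, namely verifying that the maximal value attained in $R(a,n)$ is exactly $A_n-1$. I would check this cleanly via the identity
\[
aA_j+(a-1)\sum_{k=j+1}^{n-1}A_k = A_n-1,
\]
and prove it by induction on $n-j$ using $A_{k+1}=aA_k+1$.

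As an alternative cross-check, a counting argument also works: $|R(a,n)|=A_n$ by the recursion $|R(a,n+1)|=a|R(a,n)|+1$. Combined with injectivity, this would give surjectivity onto $[0,A_n)$.
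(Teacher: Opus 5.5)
Your approach is essentially the paper's. The paper builds $\mathbf u^{(\ell)}$ by greedy division from the top weight down, $u_j=\lfloor r_j/A_j\rfloor$, and gets uniqueness by saying that $u_{n-1}=\lfloor \ell/A_{n-1}\rfloor$ is ``forced''. Your partition of $[0,A_{n+1})$ by the last coordinate is what makes that word rigorous. It rests on the fact that every vector of $R(a,n)$ represents an integer below $A_n$, which the paper leaves implicit.

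However, the identity you single out as the key check is false. In fact
\[
aA_j+(a-1)\sum_{k=j+1}^{n-1}A_k=A_n-(n-j),
\]
because $(a-1)A_k=A_{k+1}-A_k-1$ and the $-1$'s accumulate. For example, with $a=2$, $n=3$, $j=1$ the left side is $2+3=5$, while $A_3-1=6$. Only $j=n-1$ gives $A_n-1$, so an induction on $n-j$ aimed at $A_n-1$ fails at its first step.

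The bound itself survives, since $A_n-(n-j)\le A_n-1$; the error is in the computation, not the argument. A cleaner repair is to put the bound into your induction hypothesis and drop the closed-form identity. If every element of $R(a,n)$ represents an integer in $[0,A_n)$, your splitting step shows two things about $R(a,n+1)$. Elements with $u_n\le a-1$ represent integers in $[0,aA_n)$. The unique element with $u_n=a$ represents $aA_n=A_{n+1}-1$. So the bound propagates from $n$ to $n+1$. With this fix, your existence argument, your uniqueness argument, and your counting cross-check $|R(a,n)|=A_n$ are all correct.
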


\begin{proof}
We construct $\mathbf{u}^{(\ell)}$ by a greedy division algorithm.
Set $r_{n-1}:=\ell$. For $j=n-1,n-2,\ldots,1$ define
\[
u^{(\ell)}_j:=\left\lfloor \frac{r_j}{A_j}\right\rfloor,
\qquad
r_{j-1}:=r_j-u^{(\ell)}_jA_j.
\]
Since $r_j<A_{j+1}=aA_j+1$, we have $u^{(\ell)}_j\in\{0,1,\ldots,a\}$ for all $j$. If $u^{(\ell)}_j=a$, then $r_{j-1}=r_j-aA_j<1$, so $r_{j-1}=0$ and hence $u^{(\ell)}_{j-1}=\cdots=u^{(\ell)}_1=0$. Thus $\mathbf{u}^{(\ell)}\in R(a,n)$ and $\ell=\sum_{i=1}^{n-1}u^{(\ell)}_iA_i$ by construction.

Uniqueness follows from the same division step: $u^{(\ell)}_{n-1}=\lfloor \ell/A_{n-1}\rfloor$ is forced, and once it is fixed, the remainder $r_{n-2}=\ell-u^{(\ell)}_{n-1}A_{n-1}$ is uniquely determined. The argument repeats recursively.
\end{proof}

In what follows, $\mathbf{u}^{(\ell)}$ denotes the unique vector given by
Proposition~\ref{prop:canonical}. The next result records a minimality property
of this canonical representative.

Recall that the graded reverse lexicographical order on $\mathbb{N}^{n-1}$ is defined as follows: $\mathbf{u}=(u_1,\ldots,u_{n-1}) \preceq_{\mathrm{grevlex}} \mathbf{v}=(v_1,\ldots,v_{n-1})$ if and only if $\sum_{i=1}^{n-1}u_i<\sum_{i=1}^{n-1}v_i$, or $\sum_{i=1}^{n-1}u_i=\sum_{i=1}^{n-1}v_i$ and, letting $t=\max\{i\in\{1,\ldots,n-1\}\mid u_i\neq v_i\}$, one has $u_t>v_t$.

\begin{lemma}\label{lem:grevlex}
Let $\ell\in[0,A_n)\cap\mathbb{N}$. If
$\ell=\sum_{i=1}^{n-1}u_iA_i$ for some
$\mathbf{u}=(u_1,\ldots,u_{n-1})\in\mathbb{N}^{n-1}$, then
$\mathbf{u}^{(\ell)}\preceq_{\mathrm{grevlex}}\mathbf{u}$.
\end{lemma}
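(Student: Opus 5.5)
Note first that in $\preceq_{\mathrm{grevlex}}$, as defined, ties in total degree are broken in favour of the vector with the \emph{larger} last differing coordinate. So the claim is that the greedy representative has minimal coordinate sum, and among vectors of equal sum it has the largest coordinate at the highest index where they differ. The proof is a rewriting (reduction) argument: start from an arbitrary $\mathbf u\in\mathbb N^{n-1}$ with $\sum u_iA_i=\ell$ and apply moves that preserve $\ell$ and strictly decrease the vector in grevlex, until we reach an element of $R(a,n)$. By the uniqueness part of Proposition~\ref{prop:canonical}, that element must be $\mathbf u^{(\ell)}$. Since $\preceq_{\mathrm{grevlex}}$ is a well-order on $\mathbb N^{n-1}$ (finitely many vectors of each degree), the process terminates, and transitivity then gives $\mathbf u^{(\ell)}\preceq_{\mathrm{grevlex}}\mathbf u$.

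The moves come from the identities $A_{j+1}=aA_j+1=aA_j+A_1$ and $A_1=1$.

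\emph{Move 1.} If $u_j\ge a+1$ for some $j\le n-2$, replace $(a+1)A_j$ by $A_{j+1}+(A_j-1)$. The term $A_j-1=A_j-A_1$ must itself be expressed with small degree. It is cleaner to use
\[
(a+1)A_j = A_{j+1} + (A_j - 1), \qquad A_j - 1 = a A_{j-1},
\]
so $(a+1)A_j=A_{j+1}+aA_{j-1}$ for $j\ge 2$, while for $j=1$ we simply have $(a+1)A_1=A_2$. The move thus replaces $a+1$ copies by at most $1+a$ copies. For $j=1$ the degree drops strictly. For $j\ge 2$ the degree is unchanged, but the coordinate at index $j+1$ increases while nothing above $j+1$ changes, so the vector strictly decreases in grevlex.

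\emph{Move 2.} If $u_j=a$ and $u_k>0$ for some $k<j$, with $j\le n-2$, use $aA_j+A_k=A_{j+1}+(A_k-1)=A_{j+1}+aA_{k-1}$ for $k\ge2$, or $aA_j+A_1=A_{j+1}$ for $k=1$. Counting copies, $a+1$ become $1+a$ (or $1$), and again the top change is an increase at index $j+1$, so the vector strictly decreases.

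\emph{Boundary index $j=n-1$.} Violations here must be excluded using $\ell<A_n$. If $u_{n-1}\ge a+1$, or $u_{n-1}=a$ with some $u_k>0$ for $k<n-1$, then $\ell\ge aA_{n-1}+1=A_n$, a contradiction. So no move is ever needed at the top index.

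If neither move applies, then every $u_j\le a$ and condition (b) of Definition~\ref{def:conj_R} holds, that is, $\mathbf u\in R(a,n)$, and we conclude as above. I expect the main obstacle to be the bookkeeping in checking that each move is a strict grevlex decrease, in particular the equal-degree cases. The crucial point is that the highest index affected always gains a unit, which is exactly the tie-break direction in the paper's definition. I would double-check the $j=1$ and $k=1$ edge cases, where $A_0=0$, separately.
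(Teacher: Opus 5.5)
Your proposal is correct and follows the paper's proof: you use the same rewriting identity $aA_j+A_k=A_{j+1}+aA_{k-1}$ (your Move 1 is the paper's case $j=k$, your Move 2 its case $j\neq k$), the same bound $\ell<A_n$ to exclude violations at index $n-1$, and the same strict-grevlex-decrease, well-order and uniqueness conclusion. The only difference is that the paper always reduces at the smallest violating index, whereas you allow any violation, and this does not affect the argument.
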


\begin{proof}
Let $\mathbf{u}=(u_1,\ldots,u_{n-1})\in\mathbb{N}^{n-1}$ be a representation of $\ell$, that is, $\ell=\sum_{i=1}^{n-1}u_iA_i$. We prove that, if $\mathbf{u}\notin R(a,n)$, then there exists another representation $\mathbf{u}'$ of $\ell$ such that $\mathbf{u}'\prec_{\mathrm{grevlex}}\mathbf{u}$.

Since $\mathbf{u}\notin R(a,n)$, choose the smallest index $j$ such that
either $u_j\ge a+1$, or $u_j=a$ and $u_k>0$ for some $k<j$. If
$u_j=a$, let $k<j$ be the smallest index with $u_k>0$; if
$u_j\ge a+1$, let $k\le j$ be the smallest index with $u_k>0$.

Notice that $j<n-1$. Indeed, if $j=n-1$, then either
$u_{n-1}\ge a+1$, or $u_{n-1}=a$ and some lower coordinate is positive.
In both cases $\sum_{i=1}^{n-1}u_iA_i\ge aA_{n-1}+1=A_n$,
contradicting $\ell<A_n$.
We use the identity $aA_j+A_k=A_{j+1}+aA_{k-1}$, with $A_0=0$. If $j\ne k$, define $\mathbf{u}'$ by setting
\[
u'_j=u_j-a,\quad u'_k=u_k-1,\quad u'_{j+1}=u_{j+1}+1,\quad u'_{k-1}=u_{k-1}+a,
\]
leaving all other coordinates unchanged and omitting the coordinate $u'_0$ when $k=1$. If $j=k$, define $\mathbf{u}'$ by setting
\[
u'_j=u_j-a-1,\quad u'_{j+1}=u_{j+1}+1,\quad u'_{j-1}=u_{j-1}+a,
\]
leaving all other coordinates unchanged and omitting the coordinate $u'_0$ when $j=1$. By the choice of $j$ and $k$, all coordinates of $\mathbf{u}'$ are nonnegative: indeed, if $j\ne k$, then $u_j\ge a$ and $u_k>0$, while if $j=k$, then $u_j\ge a+1$. Moreover, the above identity gives $\ell=\sum_{i=1}^{n-1}u'_iA_i$.

It remains to compare $\mathbf{u}'$ and $\mathbf{u}$. If $k=1$, then the total degree decreases, so $\mathbf{u}'\prec_{\mathrm{grevlex}}\mathbf{u}$. If $k>1$, then the total degree is preserved, and the right-most coordinate where $\mathbf{u}'$ and $\mathbf{u}$ differ is $j+1$, where $u'_{j+1}=u_{j+1}+1$; hence again $\mathbf{u}' \prec_{\mathrm{grevlex}} \mathbf{u}$.

Repeating this reduction process, we must eventually stop, because $\preceq_{\mathrm{grevlex}}$ is a well-order on $\mathbb{N}^{n-1}$. When the process stops, the resulting vector $\mathbf{v}$ belongs to $R(a,n)$ and still represents $\ell$. Hence, by Proposition~\ref{prop:canonical}, we have $\mathbf{v}=\mathbf{u}^{(\ell)}$. Since each reduction step decreases the grevlex order, it follows that $\mathbf{u}^{(\ell)} \preceq_{\mathrm{grevlex}} \mathbf{u}$.
\end{proof}

We can now prove the description of the Ap\'ery set.

\begin{proof}[Proof of Theorem~\ref{Th:Ap}]
By Proposition~\ref{prop:canonical}, the set $\mathcal{R}_m(a)$ has
cardinality $m$. Hence it suffices to prove that, for each residue class
modulo $m$, the corresponding element in the displayed set is the least
element of $S$ in that class.

Let $k\in\{0,\ldots,m-1\}$, and let $\ell_k$ be the unique integer with
$0\le \ell_k<m$ and $d\ell_k\equiv k\pmod m$. Write
$\ell_k=\sum_{i=1}^{n-1}u_i^{(\ell_k)}A_i$ as in
Proposition~\ref{prop:canonical}, and set
$W_k=\sum_{i=1}^{n-1}u_i^{(\ell_k)}s_i$. Then
$W_k\equiv d\ell_k\equiv k\pmod m$.

Let $w$ be the least element of $S$ congruent to $k$ modulo $m$. By
\cite[Lemma~2.4]{libro}, it is enough to prove that $w=W_k$. Since
$w\in\operatorname{Ap}(S,m)$, no factorization of $w$ uses $s_0=m$.
Thus
\[
w=\sum_{i=1}^{n-1}u_is_i
=
\left(\sum_{i=1}^{n-1}u_i\right)m
+d\left(\sum_{i=1}^{n-1}u_iA_i\right)
\]
for some $u_1,\ldots,u_{n-1}\in\mathbb{N}$. Since $w\equiv k\pmod m$,
there exists $N\in\mathbb{N}$ such that
$\sum_{i=1}^{n-1}u_iA_i=Nm+\ell_k$.

If $N=0$, Lemma~\ref{lem:grevlex} gives
$\mathbf{u}^{(\ell_k)}\preceq_{\mathrm{grevlex}}(u_1,\ldots,u_{n-1})$,
and hence $\sum_i u_i^{(\ell_k)}\le \sum_i u_i$. Therefore $W_k\le w$,
and the minimality of $w$ gives $W_k=w$.

Assume that $N\ge 1$. Then $\sum_i u_iA_i\ge m$. Since
$A_i\le A_{n-1}<m$ for all $i$, we have $\sum_i u_i\ge 2$. Also,
$\sum_i u_i^{(\ell_k)}\le \ell_k\le m-1$. As $d\ge m-2$, it follows
that
\[
dN+\sum_{i=1}^{n-1}u_i-1-\sum_{i=1}^{n-1}u_i^{(\ell_k)}\ge 0.
\]
Consequently,
\[
w-m
=
W_k+
\left(dN+\sum_{i=1}^{n-1}u_i-1-\sum_{i=1}^{n-1}u_i^{(\ell_k)}\right)m
\in S,
\]
contradicting $w\in\operatorname{Ap}(S,m)$. Thus $N=0$, and the previous
case yields $w=W_k$.

Therefore the displayed set contains the least representative of every
class modulo $m$, and hence it is $\operatorname{Ap}(S,m)$.
\end{proof}

We finally point out a useful comparison property of the canonical representatives.

\begin{corollary}\label{cor:order-u}
With the above notation, for any distinct $\ell,\ell'\in[0,A_n)\cap\mathbb{N}$ we have $\ell<\ell'$
if and only if the right-most nonzero entry of $\mathbf{u}^{(\ell')}-\mathbf{u}^{(\ell)}$ is positive.
\end{corollary}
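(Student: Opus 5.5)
The plan is to prove the key inequality: if $\mathbf{u}\in R(a,n)$ has right-most nonzero entry at position $t$, then
\[
\sum_{i=1}^{t}u_iA_i\ \ge\ A_t
\qquad\text{and}\qquad
\sum_{i=1}^{t-1}u_iA_i\ <\ A_t .
\]
In fact we want the stronger tail bound: for every $\mathbf{u}\in R(a,n)$ and every $j$,
\begin{equation*}
\sum_{i=1}^{j}u_iA_i\ \le\ aA_j=A_{j+1}-1<A_{j+1}.
\end{equation*}
The natural route is induction on $j$. The case $j=1$ is $u_1\le a$. For the step, if $u_{j+1}=a$ then condition (b) forces all lower entries to vanish, so the sum equals $aA_{j+1}$. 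If instead $u_{j+1}\le a-1$, the induction hypothesis gives
\[
\sum_{i=1}^{j+1}u_iA_i\le (a-1)A_{j+1}+A_{j+1}-1=aA_{j+1}-1,
\]
which is within the bound. This is essentially the invariant $r_j<A_{j+1}$ already used in the proof of Proposition~\ref{prop:canonical}.

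With this bound, let $\ell\neq\ell'$ and let $t$ be the right-most index where $\mathbf{u}^{(\ell)}$ and $\mathbf{u}^{(\ell')}$ differ; this is the right-most nonzero entry of the difference. Suppose first that $u^{(\ell')}_t>u^{(\ell)}_t$. The entries above $t$ agree, so they cancel in $\ell'-\ell$. We then compute
\[
\ell'-\ell=\bigl(u^{(\ell')}_t-u^{(\ell)}_t\bigr)A_t+\sum_{i<t}u^{(\ell')}_iA_i-\sum_{i<t}u^{(\ell)}_iA_i .
\]
Here the first term is at least $A_t$, the middle sum is nonnegative, and the last sum is at most $A_t-1$ by the tail bound with $j=t-1$ (trivially true when $t=1$). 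Hence $\ell'-\ell\ge 1$, that is, $\ell<\ell'$.

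Symmetrically, if the entry at $t$ is negative, then $\ell'<\ell$. Since exactly one of the two cases occurs for distinct $\ell,\ell'$, the two cases together give the stated equivalence.

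The main subtlety is the tail bound itself: naively $\sum_{i\le j}aA_i$ exceeds $A_{j+1}$. One must exploit condition (b), namely that a coordinate equal to $a$ kills everything below it, and the induction above handles this cleanly. No earlier result beyond the definition of $R(a,n)$ is needed, although one could alternatively derive the statement from the uniqueness in Proposition~\ref{prop:canonical} together with the greedy construction.
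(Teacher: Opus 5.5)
Your proof is correct and takes essentially the same route as the paper: both isolate the right-most differing coordinate $t$ and show that the lower coordinates contribute strictly less than $A_t$ in absolute value. Your induction proving $\sum_{i\le j}u_iA_i\le aA_j=A_{j+1}-1$ for vectors in $R(a,n)$ also makes explicit a fact the paper uses without justification, namely that truncated vectors in $R(a,k)$ represent integers in $[0,A_k)$.
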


\begin{proof}
Let $k=\max\{\,i\in\{1,\ldots,n-1\}\mid
u_i^{(\ell')}\neq u_i^{(\ell)}\,\}$. If $k=1$, then
$\ell'-\ell=(u^{(\ell')}_1-u^{(\ell)}_1)A_1$, so the claim is immediate.
Hence assume that $k\ge 2$.

Since
$(u_1^{(\ell)},\ldots,u_{k-1}^{(\ell)})$ and
$(u_1^{(\ell')},\ldots,u_{k-1}^{(\ell')})$ belong to $R(a,k)$, the
corresponding sums
$\sum_{i=1}^{k-1}u_i^{(\ell)}A_i$ and
$\sum_{i=1}^{k-1}u_i^{(\ell')}A_i$ lie in $[0,A_k)$. Hence
$D:=\sum_{i=1}^{k-1}(u_i^{(\ell')}-u_i^{(\ell)})A_i \in(-A_k,A_k)$.
Therefore
\[
\ell'-\ell=(u_k^{(\ell')}-u_k^{(\ell)})A_k+D.
\]
Since $u_k^{(\ell')}-u_k^{(\ell)}$ is a nonzero integer, the sign of
$\ell'-\ell$ is the sign of $u_k^{(\ell')}-u_k^{(\ell)}$, and the claim
follows.
\end{proof}

\subsection{Factorization lengths and classical invariants} 

We now derive some consequences of the Ap\'ery description for factorization lengths and classical invariants. First, recall that for a finite set $B\subset\mathbb{N}\setminus\{0\}$ the set of factorization lengths of
$s\in\mathbb{N}$ with respect to $B$ is
\[
\mathsf{L}_B(s):=\left\{\sum_{b\in B} u_b\ \middle|\ s=\sum_{b\in B} u_b\,b,\ u_b\in\mathbb{N}\right\},
\]
and $\mathsf{m}_B(s)$ denotes the minimum factorization length of $s$ with respect to $B$.

\begin{corollary}
For each $k\in\{0,\ldots,m-1\}$, let $w(k)$ be the element of $\operatorname{Ap}(S)$ congruent to $k$ modulo $m$. Then $w(k)$ factorizes as
\[
w(k)=\sum_{i=1}^{n-1}u^{(\ell_k)}_i s_i,
\]
where $\ell_k$ is the unique integer in $\{0,\ldots,m-1\}$ with $d\ell_k\equiv k\pmod m$ and $\mathbf{u}^{(\ell_k)}=(u^{(\ell_k)}_1,\ldots,u^{(\ell_k)}_{n-1})$ is the canonical representative of $\ell_k$. In particular,
\[
\mathsf{L}_{\operatorname{msg}(S)}(w(k))
=
\left\{\sum_{i=1}^{n-1}u^{(\ell_k)}_i\right\}.
\]
\end{corollary}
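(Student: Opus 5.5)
The factorization statement follows directly from the proof of Theorem~\ref{Th:Ap}. There we showed that $W_k=\sum_{i=1}^{n-1}u_i^{(\ell_k)}s_i$ is the least element of $S$ congruent to $k$ modulo $m$. Hence $w(k)=W_k$, and this displayed expression is a factorization of $w(k)$ in terms of $\operatorname{msg}(S)=\{s_0,\ldots,s_{n-1}\}$.

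The real content is that every factorization of $w(k)$ has length $\sum_i u_i^{(\ell_k)}$. I would take an arbitrary factorization $w(k)=\sum_{i=0}^{n-1}v_is_i$ and argue as follows.

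First, $v_0=0$. Otherwise $w(k)-m\in S$, contradicting $w(k)\in\operatorname{Ap}(S)$.

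Then write
\[
w(k)=\Bigl(\sum_{i\ge1} v_i\Bigr)m+d\sum_{i\ge1}v_iA_i .
\]
Reducing modulo $m$ and using $\gcd(d,m)=1$ gives $\sum_i v_iA_i=Nm+\ell_k$ with $N\in\mathbb N$. The case analysis in the proof of Theorem~\ref{Th:Ap} applies verbatim to any factorization avoiding $s_0$: if $N\ge1$, then $w(k)-m\in S$, which is a contradiction. So $N=0$ and $\sum_i v_iA_i=\ell_k$.

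Now compare the two expressions for $w(k)$:
\[
\Bigl(\sum_i v_i\Bigr)m+d\ell_k=\Bigl(\sum_i u_i^{(\ell_k)}\Bigr)m+d\ell_k .
\]
Subtracting gives $\sum_i v_i=\sum_i u_i^{(\ell_k)}$, so the length set is the singleton claimed.

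The only delicate point is making sure the $N\ge1$ argument really uses nothing beyond $v_0=0$ and $d\ge m-2$. Checking it: $\sum_i v_i\ge2$ because each $A_i<m$ while $\sum_i v_iA_i\ge m$, and $\sum_i u_i^{(\ell_k)}\le m-1$. These give nonnegativity of the coefficient $dN+\sum_i v_i-1-\sum_i u_i^{(\ell_k)}$, exactly as before. So no obstacle remains.

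As a remark, the argument actually shows more: every factorization of $w(k)$ corresponds to a representation of $\ell_k$ as $\sum_i v_iA_i$.
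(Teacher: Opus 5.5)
Your proposal is correct and follows essentially the same route as the paper. Both arguments identify $w(k)=W_k$ from the proof of Theorem~\ref{Th:Ap}, force $v_0=0$, reduce modulo $m$ to get $\sum_i v_iA_i=Nm+\ell_k$, exclude $N\ge 1$ by the same coefficient estimate (you verify this explicitly where the paper just says ``as in the proof of Theorem~\ref{Th:Ap}''), and then compare the coefficients of $m$.
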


\begin{proof}
The displayed factorization is precisely the equality $w(k)=W_k$ proved in Theorem~\ref{Th:Ap}. Let $w(k)=\sum_{i=0}^{n-1}u_is_i$ be any factorization with respect to $\operatorname{msg}(S)$. Since $w(k)\in\operatorname{Ap}(S)$, we have $u_0=0$. Set $L=\sum_{i=1}^{n-1}u_i$ and $M=\sum_{i=1}^{n-1}u_iA_i$. Then $w(k)=Lm+dM$, and the congruence $w(k)\equiv k\pmod m$ gives $M=Nm+\ell_k$ for some $N\in\mathbb N$.

As in the proof of Theorem~\ref{Th:Ap}, the assumption $N\ge 1$ would imply $w(k)-m\in S$, which is impossible because $w(k)\in\operatorname{Ap}(S)$. Thus $N=0$. Hence $M=\ell_k$, and $w(k)=Lm+d\ell_k$. On the other hand, the canonical factorization gives $w(k)=\bigl(\sum_{i=1}^{n-1}u_i^{(\ell_k)}\bigr)m+d\ell_k$. Therefore $L=\sum_{i=1}^{n-1}u_i^{(\ell_k)}$, so every factorization of $w(k)$ has the same length.
\end{proof}

By the previous corollary, every $s\in\operatorname{Ap}(S)$ has a unique factorization length with respect to $\operatorname{msg}(S)$. Hence $S$ is homogeneous in the sense of \cite{JafariZarzuela18}.

Set $A:=\{A_i\mid i=1,\ldots,n-1\}$. By Proposition~\ref{prop:canonical} and Lemma~\ref{lem:grevlex}, the canonical representative of each $\ell\in[0,A_n)\cap\mathbb{N}$ has minimum length among all factorizations of $\ell$ with respect to $A$. Therefore
\begin{equation}\label{minFL}
\mathsf{m}_A(\ell)=\sum_{i=1}^{n-1}u_i^{(\ell)}.
\end{equation}

\begin{corollary}\label{cor.OP}
With the above notation,
\[
\operatorname{Ap}(S)=\left\{d\ell+\mathsf{m}_A(\ell)\,m\ \middle|\ 0\le \ell\le m-1\right\}.
\]
\end{corollary}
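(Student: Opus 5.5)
The plan is to rewrite each element in the description of Theorem~\ref{Th:Ap} in terms of $\ell$, and then check that this matches the displayed set exactly, with the index $\ell$ running over $\{0,\ldots,m-1\}$.

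\textbf{Step 1: rewrite a single element.} Take $\mathbf{u}=(u_1,\ldots,u_{n-1})\in\mathcal{R}_m(a)$ and set $\ell=\sum_{i=1}^{n-1}u_iA_i$. By definition of $\mathcal{R}_m(a)$ we have $0\le \ell\le m-1$. Since $m\le A_n$, this places $\ell$ in $[0,A_n)$. Because $\mathbf{u}\in R(a,n)$ represents $\ell$, the uniqueness in Proposition~\ref{prop:canonical} gives $\mathbf{u}=\mathbf{u}^{(\ell)}$. Using $s_i=m+dA_i$, we then get
\[
\sum_{i=1}^{n-1}u_is_i=\Bigl(\sum_{i=1}^{n-1}u_i^{(\ell)}\Bigr)m+d\ell .
\]
By \eqref{minFL}, the coefficient of $m$ equals $\mathsf{m}_A(\ell)$, so the element is $d\ell+\mathsf{m}_A(\ell)\,m$.

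\textbf{Step 2: the correspondence is a bijection.} For each $\ell\in\{0,\ldots,m-1\}$, the canonical representative $\mathbf{u}^{(\ell)}$ lies in $R(a,n)$ and has weighted sum $\ell<m$, so it belongs to $\mathcal{R}_m(a)$. Hence $\mathbf{u}\mapsto\ell$ is a bijection from $\mathcal{R}_m(a)$ onto $\{0,\ldots,m-1\}$, and under it the set of Theorem~\ref{Th:Ap} becomes exactly the displayed set.

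The only point needing care is that \eqref{minFL} is stated for $\ell<A_n$. This is guaranteed by the choice of $n$, since $\ell<m\le A_n$. With that checked, there is no real obstacle and the argument is bookkeeping. As a consistency check, the $m$ values $d\ell+\mathsf{m}_A(\ell)\,m$ are pairwise distinct modulo $m$, because $\gcd(d,m)=1$. This agrees with $|\operatorname{Ap}(S)|=m$.
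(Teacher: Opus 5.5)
Your proposal is correct and follows essentially the same argument as the paper: expand $\sum_{i=1}^{n-1} u_i^{(\ell)} s_i$ using $s_i=m+dA_i$, then invoke \eqref{minFL}. The only difference is cosmetic. The paper indexes Ap\'ery elements by residue class $k$ through $\ell_k$, while you index directly through the bijection $\mathcal{R}_m(a)\to\{0,\ldots,m-1\}$ that comes from uniqueness of canonical representatives.
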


\begin{proof}
For each $k$, the element $w(k)$ satisfies $w(k)\equiv d\ell_k\pmod m$ and equals
\[
w(k)=d\ell_k+\left(\sum_{i=1}^{n-1}u_i^{(\ell_k)}\right)m
\]
by expanding $\sum_{i=1}^{n-1}u_i^{(\ell_k)}s_i$. Using~\eqref{minFL}, this becomes
$w(k)=d\ell_k+\mathsf{m}_A(\ell_k)m$. Since $\ell_k$ runs over $\{0,\ldots,m-1\}$ as $k$ does,
the result follows.
\end{proof}

The explicit description of $\operatorname{Ap}(S)$ in Theorem~\ref{Th:Ap} and Corollary~\ref{cor.OP} can be viewed as an analogue of \cite[Theorem~3.3]{ONeillPelayo18} for this shifted family, providing a closed description of Apéry sets without additional largeness assumptions.

\begin{corollary}\label{cor:FrobGen}
With the above notation, the Frobenius number and the genus of $S$ are given by
\begin{itemize}
    \item $\operatorname{F}(S) = \max_{0\le \ell\le m-1}\bigl\{d\ell+\mathsf{m}_A(\ell)\,m\bigr\}-m$;
    \item $g(S)=\sum_{\ell=0}^{m-1}\mathsf{m}_A(\ell)+\dfrac{(d-1)(m-1)}{2}$.
\end{itemize}
\end{corollary}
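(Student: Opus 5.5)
The plan is to apply the two standard Apéry-set formulas of Selmer type, namely \cite[Proposition~2.12]{libro}:
\[
\operatorname{F}(S)=\max\operatorname{Ap}(S,m)-m,\qquad g(S)=\frac{1}{m}\sum_{w\in\operatorname{Ap}(S,m)}w-\frac{m-1}{2}.
\]
We then substitute the explicit parametrization of Corollary~\ref{cor.OP}. That corollary also shows that the map $\ell\mapsto d\ell+\mathsf m_A(\ell)m$ is a bijection from $\{0,\ldots,m-1\}$ onto $\operatorname{Ap}(S)$. Indeed, its image has $m$ elements, because these elements are pairwise incongruent modulo $m$ (as $\gcd(d,m)=1$). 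This is exactly what the proof of Theorem~\ref{Th:Ap} established, with $\ell=\ell_k$.

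For the Frobenius number there is nothing more to do. The maximum of $\operatorname{Ap}(S)$ is the maximum of $d\ell+\mathsf m_A(\ell)m$ over $0\le\ell\le m-1$, and subtracting $m$ gives the first item.

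For the genus, we sum over the parametrization:
\[
\sum_{w\in\operatorname{Ap}(S)}w=d\sum_{\ell=0}^{m-1}\ell+m\sum_{\ell=0}^{m-1}\mathsf m_A(\ell)=\frac{dm(m-1)}{2}+m\sum_{\ell=0}^{m-1}\mathsf m_A(\ell).
\]
Dividing by $m$ and subtracting $(m-1)/2$ yields
\[
\sum_{\ell}\mathsf m_A(\ell)+\frac{d(m-1)}{2}-\frac{m-1}{2}=\sum_{\ell}\mathsf m_A(\ell)+\frac{(d-1)(m-1)}{2},
\]
which is the second item.

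There is no genuine obstacle here. The only point needing care is to justify that the parametrization in Corollary~\ref{cor.OP} is injective, so that summing over $\ell$ counts each Apéry element exactly once; this follows from the congruence argument above.
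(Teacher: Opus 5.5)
Your proposal is correct and follows the paper's proof exactly: substitute the parametrization of Corollary~\ref{cor.OP} into Selmer's formulas \cite[Proposition~2.12]{libro} and use $\sum_{\ell=0}^{m-1}\ell=m(m-1)/2$. You also note that the parametrization is injective, since its values are pairwise incongruent modulo $m$ because $\gcd(d,m)=1$; this makes explicit a point the paper leaves implicit.
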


\begin{proof}
By Corollary~\ref{cor.OP}, $\operatorname{Ap}(S)=\{\,d\ell+\mathsf{m}_A(\ell)m \mid 0\le \ell\le m-1\,\}$.
Selmer's formulas for the Frobenius number and the genus in terms of the Ap\'ery set
(see, e.g., \cite[Proposition~2.12]{libro}) yield
$\operatorname{F}(S)=\max\operatorname{Ap}(S)-m$ and
\[
g(S)=\frac{1}{m}\sum_{\ell=0}^{m-1}\bigl(d\ell+\mathsf{m}_A(\ell)m\bigr)-\frac{m-1}{2}.
\]
The stated formula for the genus follows from
$\sum_{\ell=0}^{m-1}\ell=m(m-1)/2$.
\end{proof}

\begin{corollary}\label{cor:F_at_m-1}
With the above notation, if the maximum of $\{\,d\ell+\mathsf{m}_A(\ell)m \mid 0\le \ell\le m-1\,\}$ is attained at $\ell=m-1$, then
\begin{equation}\label{ecu.F.Ugo}
\operatorname{F}(S)=d(m-1)+\mathsf{m}_A(m-1)m-m.
\end{equation}
\end{corollary}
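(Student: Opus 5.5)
This is an immediate specialization of Corollary~\ref{cor:FrobGen}, so the plan is simply to unwind that formula.

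By Corollary~\ref{cor.OP}, the Apéry set is
\[
\operatorname{Ap}(S)=\{\,d\ell+\mathsf{m}_A(\ell)m \mid 0\le \ell\le m-1\,\}.
\]
The first item of Corollary~\ref{cor:FrobGen} (Selmer's formula $\operatorname{F}(S)=\max\operatorname{Ap}(S)-m$) gives
\[
\operatorname{F}(S)=\max_{0\le\ell\le m-1}\{d\ell+\mathsf{m}_A(\ell)m\}-m.
\]
Under the hypothesis, this maximum is attained at $\ell=m-1$, so it equals $d(m-1)+\mathsf{m}_A(m-1)m$. Substituting this value yields \eqref{ecu.F.Ugo}.

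If one wants $\mathsf{m}_A(m-1)$ made explicit, note that $m-1<m\le A_n$, so $m-1$ lies in the range of Proposition~\ref{prop:canonical}. By \eqref{minFL}, $\mathsf{m}_A(m-1)$ is then the coordinate sum of the canonical representative $\mathbf{u}^{(m-1)}$.

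There is no real obstacle here. The only point to check is that $\ell=m-1$ is an admissible index in the parametrization, and that holds because the parameter runs over all of $\{0,\ldots,m-1\}$.
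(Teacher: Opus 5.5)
Your proposal is correct and is exactly the paper's argument: substitute the value of the maximum at $\ell=m-1$ into the Frobenius formula $\operatorname{F}(S)=\max_{0\le \ell\le m-1}\{d\ell+\mathsf{m}_A(\ell)m\}-m$ from Corollary~\ref{cor:FrobGen}. Your extra remark identifying $\mathsf{m}_A(m-1)$ with the coordinate sum of $\mathbf{u}^{(m-1)}$ is accurate but not needed.
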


\begin{proof}
By Corollary~\ref{cor:FrobGen}, we have $\operatorname{F}(S)=\max_{0\le \ell\le m-1}\left(d\ell+\mathsf{m}_A(\ell)m\right)-m$. So, if the maximum is attained at $\ell=m-1$, then \eqref{ecu.F.Ugo} follows immediately.
\end{proof}

Corollary~\ref{cor:F_at_m-1} isolates the only input needed to obtain \eqref{ecu.F.Ugo}, namely that the maximum of
$\{d\ell+\mathsf{m}_A(\ell)m:0\le \ell\le m-1\}$ is attained at $\ell=m-1$.
In the case $b\ge 0$, this endpoint condition is verified in \cite[Theorem~3.4(3)]{Ugo17}; hence \eqref{ecu.F.Ugo} recovers the
corresponding Frobenius formula in \cite{Ugo17} (written in our notation). The following example illustrates that this condition is not
automatic in the feasible negative-$b$ range, and thus \eqref{ecu.F.Ugo} may fail.

\begin{example}
If $a=3$, $b=-43$ and $m=42$, then $b>-(a-2)m-2=-44$ and $S=S_{a,b}(m)$ is generated by $\{42,83,206,575,1682\}$.
The Frobenius number of $S$ is $1769$, whereas the right-hand side of \eqref{ecu.F.Ugo} is $1723$.
\end{example}

\section{Pseudo-Frobenius numbers}\label{sec:PF}

Let $S$ be a numerical semigroup. An element
$x\in\mathbb{Z}\setminus S$ is a pseudo-Frobenius number of $S$ if
$x+(S\setminus\{0\})\subseteq S$. We denote by $\operatorname{PF}(S)$
the set of pseudo-Frobenius numbers of $S$, and by $\operatorname{t}(S)$
its cardinality.

The following result is the case $n=\operatorname{m}(S)$ of
\cite[Proposition~2.20]{libro}.

\begin{proposition}\label{prop:PF1}
Let $S$ be a numerical semigroup and let $m=\operatorname{m}(S)$. Then
\[
\operatorname{PF}(S)=
\{\,w-m \mid w\in \operatorname{Maximals}_{\preceq_S}\operatorname{Ap}(S)\,\},
\]
where $x\preceq_S y$ if and only if $y-x\in S$.
\end{proposition}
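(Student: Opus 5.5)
The plan is to prove the two inclusions separately, relying on two standard facts. The first is the characterization of the Apéry set: $w\in\operatorname{Ap}(S)$ if and only if $w\in S$ and $w-m\notin S$. The second is that every element of $S$ has the form $w+km$ with $w\in\operatorname{Ap}(S)$ and $k\in\mathbb{N}$, because the Apéry set contains the least element of $S$ in each residue class modulo $m$. I will also use that $\operatorname{Ap}(S)$ is closed under taking $\preceq_S$-smaller elements. Indeed, if $w\in\operatorname{Ap}(S)$ and $w=u+s$ with $u,s\in S$, then $u\in\operatorname{Ap}(S)$: otherwise $u-m\in S$, and then $w-m=(u-m)+s\in S$, a contradiction.

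For the inclusion $\supseteq$, let $w$ be a maximal element of $\operatorname{Ap}(S)$ with respect to $\preceq_S$, and put $x=w-m$. Then $x\notin S$ because $w\in\operatorname{Ap}(S)$. Take $s\in S\setminus\{0\}$; we must show $x+s\in S$.

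If $s=m$, then $x+s=w\in S$.

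Otherwise consider $w+s$. If $w+s\in\operatorname{Ap}(S)$, then $w\preceq_S w+s$ with $w+s\neq w$, since $s>0$. This contradicts the maximality of $w$. Hence $w+s\notin\operatorname{Ap}(S)$. As $w+s\in S$, this forces $w+s-m\in S$, that is, $x+s\in S$.

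So $x\in\operatorname{PF}(S)$. The step to check carefully is that maximality is with respect to the partial order $\preceq_S$ restricted to $\operatorname{Ap}(S)$, and that distinct elements are strictly comparable here. Both hold because $\preceq_S$ is antisymmetric: if $y-x$ and $x-y$ both lie in $S\subseteq\mathbb{N}$, then $x=y$.

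For the inclusion $\subseteq$, let $x\in\operatorname{PF}(S)$ and put $w=x+m$. Then $w\in S$ because $m\in S\setminus\{0\}$, and $w-m=x\notin S$, so $w\in\operatorname{Ap}(S)$. For maximality, suppose $w\preceq_S w'$ with $w'\in\operatorname{Ap}(S)$ and $w'\neq w$, and write $s=w'-w\in S\setminus\{0\}$. Then $w'-m=x+s\in S$ by the pseudo-Frobenius property, contradicting $w'\in\operatorname{Ap}(S)$. Hence $w$ is maximal, and $x=w-m$ lies in the right-hand side.

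No step is a real obstacle. Alternatively, the whole statement is exactly \cite[Proposition~2.20]{libro} applied with $n=m$, so one could simply cite it. The direct argument above makes the dependence on $m\in S\setminus\{0\}$ explicit: this is where the choice $n=\operatorname{m}(S)$, or more generally any nonzero element of $S$, enters.
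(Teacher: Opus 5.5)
Your two-inclusion argument is correct and is essentially the paper's: the paper gives no separate proof but cites \cite[Proposition~2.20]{libro} with $n=\operatorname{m}(S)$, and the standard proof of that result is exactly your argument via maximality of $w$ in $\operatorname{Ap}(S)$ and the pseudo-Frobenius property applied to $w'-w\in S\setminus\{0\}$. The preliminary facts you list, namely downward closure of $\operatorname{Ap}(S)$ under $\preceq_S$ and the decomposition $w+km$, are never used and can be dropped.
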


Throughout this section, let $a,b,m\in\mathbb{Z}$ with $a\ge 2$,
$m>1$, $\gcd(b,m)=1$, and $b\ge -(a-2)m-2$. We keep the notation of
Section~\ref{sec:apery}: $S=S_{a,b}(m)$,
$d=(a-1)m+b$, $A_i=(a^i-1)/(a-1)$, and $s_i=m+dA_i$. Thus
$d\ge m-2$ and
$\operatorname{msg}(S)=\{s_0,\ldots,s_{n-1}\}$, where $n$ is the
smallest positive integer such that $A_n\ge m$. We also write
$A=\{A_i\mid 1\le i\le n-1\}$ and denote by $\mathsf{m}_A(\ell)$ the
minimum factorization length of $\ell$ with respect to $A$.

\begin{proposition}\label{prop:PF2}
With the above notation,
\[
\operatorname{PF}(S) \subseteq
\left\{\sum_{i=1}^{n-1} u_i s_i - m\ \middle|\ 
\mathbf{u}=(u_1,\ldots,u_{n-1}) \in
\operatorname{Maximals}_{\preceq_{\mathbb{N}^{n-1}}}\mathcal{R}_m(a)
\right\},
\]
where $\preceq_{\mathbb{N}^{n-1}}$ denotes the product order.
\end{proposition}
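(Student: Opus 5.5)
By Proposition~\ref{prop:PF1}, every pseudo-Frobenius number has the form $w-m$ with $w$ maximal in $\operatorname{Ap}(S)$ for $\preceq_S$. So it suffices to show the following: if $w=\sum_{i=1}^{n-1}u_is_i$ with $\mathbf{u}\in\mathcal{R}_m(a)$ (as in Theorem~\ref{Th:Ap}) is maximal in $\operatorname{Ap}(S)$, then $\mathbf{u}$ is maximal in $\mathcal{R}_m(a)$ for the product order. We argue by contraposition: assume $\mathbf{u}$ is not maximal, and exhibit an element of $\operatorname{Ap}(S)$ strictly above $w$ in $\preceq_S$.

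Suppose $\mathbf{u}\preceq_{\mathbb{N}^{n-1}}\mathbf{v}$ with $\mathbf{v}\in\mathcal{R}_m(a)$ and $\mathbf{v}\neq\mathbf{u}$. Set $w'=\sum_{i=1}^{n-1}v_is_i$. By Theorem~\ref{Th:Ap}, $w'\in\operatorname{Ap}(S)$. Moreover,
\[
w'-w=\sum_{i=1}^{n-1}(v_i-u_i)s_i ,
\]
and every coefficient $v_i-u_i$ is a nonnegative integer. Hence $w'-w\in S$, that is, $w\preceq_S w'$.

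It remains to check that $w'\neq w$. The difference $w'-w$ is a nonzero combination, with nonnegative coefficients, of the positive integers $s_i$, so $w'-w>0$. Therefore $w$ is strictly below $w'$ in $\preceq_S$ and is not maximal.

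It follows that every maximal element of $\operatorname{Ap}(S)$ equals $\sum_i u_is_i$ for some maximal $\mathbf{u}\in\mathcal{R}_m(a)$. Applying Proposition~\ref{prop:PF1} then gives the stated inclusion.

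No step presents a real obstacle. The one point to watch is that the element $w'$ produced lies in $\operatorname{Ap}(S)$ itself and not merely in $S$. This is exactly what Theorem~\ref{Th:Ap} guarantees, because every vector of $\mathcal{R}_m(a)$, not only $\mathbf{u}$, yields an Apéry element.
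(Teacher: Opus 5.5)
Your proposal is correct and matches the paper's proof essentially step for step. Both reduce to maximal Ap\'ery elements via Proposition~\ref{prop:PF1}. Both then observe that if $\mathbf u$ is strictly dominated by some $\mathbf v\in\mathcal R_m(a)$, Theorem~\ref{Th:Ap} places $\sum_i v_is_i$ in $\operatorname{Ap}(S)$ strictly above $w$ in $\preceq_S$, which contradicts the maximality of $w$.
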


\begin{proof}
Let $x\in\operatorname{PF}(S)$. By Proposition~\ref{prop:PF1}, there
exists $w\in \operatorname{Maximals}_{\preceq_S}\operatorname{Ap}(S)$
such that $x=w-m$. By Theorem~\ref{Th:Ap}, write
$w=\sum_{i=1}^{n-1}u_i s_i$ with
$\mathbf{u}=(u_1,\ldots,u_{n-1})\in\mathcal{R}_m(a)$.

If $\mathbf{u}$ were not maximal in $\mathcal{R}_m(a)$ for the product
order, then there would exist
$\mathbf{v}=(v_1,\ldots,v_{n-1})\in\mathcal{R}_m(a)$ with
$\mathbf{u}\preceq_{\mathbb{N}^{n-1}}\mathbf{v}$ and
$\mathbf{u}\ne\mathbf{v}$. Again by
Theorem~\ref{Th:Ap}, $w'=\sum_{i=1}^{n-1}v_i s_i$ belongs to
$\operatorname{Ap}(S)$, and
$w'-w=\sum_{i=1}^{n-1}(v_i-u_i)s_i\in S\setminus\{0\}$, contradicting
the maximality of $w$. Thus $\mathbf{u}$ is maximal, and the result
follows.
\end{proof}

The reverse inclusion in Proposition~\ref{prop:PF2} does not hold in general.

\begin{example}
Let $a=3$, $b=-43$ and $m=44$. Then $S=S_{a,b}(m)$ is minimally generated by
$\{44,89,224,629,1844\}$, and
$\operatorname{PF}(S)=\{1886,1929,2067\}$. Moreover,
\[
\operatorname{Maximals}_{\preceq_{\mathbb{N}^{n-1}}}\mathcal{R}_m(a) = 
\{(0,0,3,0), (0,3,2,0), (3,2,2,0), (3,0,0,1)\}.
\]
The vector $(0,0,3,0)$ gives $3s_3-m=1843\notin \operatorname{PF}(S)$.
\end{example}

\subsection{A criterion for maximal Apéry elements}

Although the inclusion in Proposition~\ref{prop:PF2} is not an equality in general, it provides a finite set of candidates. We refine this approach using the parametrization of $\operatorname{Ap}(S)$ given in Corollary~\ref{cor.OP}.

For $0\le \ell\le m-1$, set
\[
W(\ell):=d\ell+\mathsf{m}_A(\ell)m.
\]
Thus $\operatorname{Ap}(S)=\{W(0),W(1),\ldots,W(m-1)\}$.

\begin{proposition}\label{prop:WplusSj_inApery}
Fix $\ell\in\{0,\ldots,m-1\}$ and $j\in\{1,\ldots,n-1\}$. Then $W(\ell)+s_j\in\operatorname{Ap}(S)$ if and only if $\ell+A_j<m$ and $\mathsf{m}_A(\ell+A_j)=\mathsf{m}_A(\ell)+1$.
\end{proposition}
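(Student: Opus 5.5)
The approach is to compute $W(\ell)+s_j$ explicitly and compare it with the unique Ap\'ery element in its residue class, using Corollary~\ref{cor.OP} and the fact that $W$ is injective modulo $m$. Since $s_j=m+dA_j$, we have
\[
W(\ell)+s_j=d(\ell+A_j)+(\mathsf{m}_A(\ell)+1)m .
\]

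For the \emph{if} direction, assume $\ell+A_j<m$ and $\mathsf{m}_A(\ell+A_j)=\mathsf{m}_A(\ell)+1$. The displayed identity gives $W(\ell)+s_j=d(\ell+A_j)+\mathsf{m}_A(\ell+A_j)m=W(\ell+A_j)$. This is in $\operatorname{Ap}(S)$ by Corollary~\ref{cor.OP}.

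For the \emph{only if} direction, let $w=W(\ell)+s_j\in\operatorname{Ap}(S)$. By Corollary~\ref{cor.OP}, $w=W(\ell')$ for some $0\le\ell'\le m-1$. Reducing modulo $m$ and using $\gcd(d,m)=1$ gives $\ell'\equiv \ell+A_j\pmod m$. Note that $\ell+A_j<2m$, since $A_j\le A_{n-1}<m$.

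\textbf{Main step: rule out $\ell'=\ell+A_j-m$.} This is the case $\ell+A_j\ge m$. Here I would argue as in the proof of Theorem~\ref{Th:Ap}. Write $w$ as the factorization $\mathbf{u}^{(\ell)}+e_j$ of generators $s_1,\dots,s_{n-1}$. Its $A$-weight is $\ell+A_j=Nm+\ell'$ with $N=1$, and its length is $\mathsf{m}_A(\ell)+1$. That proof then shows $w-m\in S$. Concretely, equate the two expressions for $w$:
\[
d(\ell'+m)+(\mathsf{m}_A(\ell)+1)m=d\ell'+\mathsf{m}_A(\ell')m,
\]
which gives $\mathsf{m}_A(\ell')=d+\mathsf{m}_A(\ell)+1$. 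Since $d\ge m-2$, this is at least $m-1$. On the other hand, $\mathsf{m}_A(\ell')\le \ell'\le \ell+A_j-m$, and $\ell+A_j-m<m-1$. This contradiction, which needs only a little care with the boundary, shows $\ell+A_j<m$, and hence $\ell'=\ell+A_j$.

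Finally, with $\ell'=\ell+A_j$, the equality $W(\ell')=W(\ell)+s_j$ gives $d\ell'+\mathsf{m}_A(\ell')m=d\ell'+(\mathsf{m}_A(\ell)+1)m$. Therefore $\mathsf{m}_A(\ell+A_j)=\mathsf{m}_A(\ell)+1$.
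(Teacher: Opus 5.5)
Your proposal is correct and follows essentially the same route as the paper. Both arguments compute $W(\ell)+s_j=d(\ell+A_j)+(\mathsf m_A(\ell)+1)m$, identify it with $W(r)$ for $r\equiv \ell+A_j\pmod m$, and exclude $r=\ell+A_j-m$ via $\mathsf m_A(r)=\mathsf m_A(\ell)+1+d\ge m-1>r\ge \mathsf m_A(r)$. The aside about imitating the proof of Theorem~\ref{Th:Ap} is unnecessary, since your concrete computation already does the work.
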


\begin{proof}
Since $s_j=m+dA_j$ and $W(\ell)=d\ell+\mathsf{m}_A(\ell)m$, we have 
\begin{equation}\label{eq:WplusSj_short}
W(\ell)+s_j=d(\ell+A_j)+\left(\mathsf{m}_A(\ell)+1\right)m.
\end{equation}
The sufficiency is immediate from this identity.

Conversely, suppose that $W(\ell)+s_j\in\operatorname{Ap}(S)$, and let $r\in\{0,\ldots,m-1\}$ be such that $r\equiv \ell+A_j\pmod m$. Since $W(\ell)+s_j\equiv dr\pmod m$, the uniqueness of representatives in the Ap\'ery set gives $W(\ell)+s_j=W(r)$.

We claim that $r=\ell+A_j$. Indeed, since $A_j\le A_{n-1}<m$, if $r\ne\ell+A_j$, then $r=\ell+A_j-m\le A_j-1<m-1$. In this case, \eqref{eq:WplusSj_short} and $W(\ell)+s_j=W(r)$ give
\[
\mathsf{m}_A(r)=\mathsf{m}_A(\ell)+1+d\ge d+1\ge m-1>r,
\]
contradicting the bound $\mathsf m_A(r)\le r$, which follows from $A_1=1$. Hence $r=\ell+A_j$, and comparison in \eqref{eq:WplusSj_short} gives $\mathsf{m}_A(\ell+A_j)=\mathsf{m}_A(\ell)+1$.
\end{proof}

\begin{theorem}\label{th:maxcrit_mA}
For $\ell\in\{0,\ldots,m-1\}$, we have $W(\ell) \in \operatorname{Maximals}_{\preceq_S}(\operatorname{Ap}(S))$ if and only if $\mathsf{m}_A(\ell+A_j)\le \mathsf{m}_A(\ell)$ for every $j\in\{1,\ldots,n-1\}$ such that $\ell+A_j<m$.
\end{theorem}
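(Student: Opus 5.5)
The plan is to reduce maximality in the Apéry poset to a statement about adding single minimal generators, and then apply Proposition~\ref{prop:WplusSj_inApery}.

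\emph{Reduction to single generators.} First I will show that $W(\ell)$ is maximal in $(\operatorname{Ap}(S),\preceq_S)$ if and only if $W(\ell)+s_j\notin\operatorname{Ap}(S)$ for every $j\in\{1,\ldots,n-1\}$.

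For the forward direction, suppose $W(\ell)+s_j\in\operatorname{Ap}(S)$ for some $j\ge 1$. Then $W(\ell)\preceq_S W(\ell)+s_j$ and the two elements are distinct, so $W(\ell)$ is not maximal.

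For the converse, suppose $W(\ell)\preceq_S w$ with $w\in\operatorname{Ap}(S)$ and $w\neq W(\ell)$. Then $s:=w-W(\ell)\in S\setminus\{0\}$. Write $s$ as a sum of minimal generators $s_0,\ldots,s_{n-1}$; this is possible by Remark~\ref{rem:e=n}. Since $\operatorname{Ap}(S)$ is closed under taking summands in $S$ (if $x+y\in\operatorname{Ap}(S)$ with $x,y\in S$, then $x\in\operatorname{Ap}(S)$), the element $w$ dominates $W(\ell)+s_i$ for any generator $s_i$ occurring in that sum, so $W(\ell)+s_i\in\operatorname{Ap}(S)$. Moreover $s_0=m$ cannot occur: otherwise $w-m\in S$, contradicting $w\in\operatorname{Ap}(S)$. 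Hence some $j\ge 1$ has $W(\ell)+s_j\in\operatorname{Ap}(S)$.

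\emph{Translation via Proposition~\ref{prop:WplusSj_inApery}.} By that proposition, $W(\ell)+s_j\in\operatorname{Ap}(S)$ exactly when $\ell+A_j<m$ and $\mathsf m_A(\ell+A_j)=\mathsf m_A(\ell)+1$. So it remains to show that, whenever $\ell+A_j<m$, one has
\[
\mathsf m_A(\ell+A_j)\le \mathsf m_A(\ell)+1 .
\]
This holds because adding $A_j$ to a minimal-length factorization of $\ell$ gives a factorization of $\ell+A_j$ of length $\mathsf m_A(\ell)+1$. Here I use that $\mathsf m_A$ is the minimum over all factorizations, which is \eqref{minFL} together with Lemma~\ref{lem:grevlex}, valid since $\ell+A_j<m\le A_n$. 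Consequently, the condition $\mathsf m_A(\ell+A_j)\ne \mathsf m_A(\ell)+1$ is equivalent to $\mathsf m_A(\ell+A_j)\le \mathsf m_A(\ell)$. Combining the two steps gives the criterion.

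The only subtle point is the reduction step, specifically ruling out $s_0$ and using the summand-closure of the Apéry set. Both are standard, so I do not expect a real obstacle.
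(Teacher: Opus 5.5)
Your proposal is correct and follows essentially the same route as the paper. You reduce maximality of $W(\ell)$ to $W(\ell)+s_j\notin\operatorname{Ap}(S)$ for $j=1,\ldots,n-1$, rule out $s_0=m$ and use that summands of Ap\'ery elements stay in $\operatorname{Ap}(S)$, and then apply Proposition~\ref{prop:WplusSj_inApery} together with the bound $\mathsf m_A(\ell+A_j)\le\mathsf m_A(\ell)+1$, which in fact holds directly by the definition of $\mathsf m_A$ as a minimum over all factorizations.
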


\begin{proof}
Let $w\in\operatorname{Ap}(S)$. Since $\operatorname{msg}(S)=\{m,s_1,\ldots,s_{n-1}\}$, we have $w \in \operatorname{Maximals}_{\preceq_S}(\operatorname{Ap}(S))$ if and only if $w+s_j\notin\operatorname{Ap}(S)$ for all $j=1,\ldots,n-1$. Indeed, if $w+y\in\operatorname{Ap}(S)$ for some $y\in S\setminus\{0\}$, then $y$ cannot be a positive multiple of $m$, since otherwise $w+y-m\in S$. Hence some factorization of $y$ involves a generator $s_j$ with $j\in\{1,\ldots,n-1\}$. For such a $j$, we have $w+s_j\in\operatorname{Ap}(S)$; otherwise $w+s_j-m\in S$, and then $w+y-m\in S$, a contradiction.

Now take $w=W(\ell)$. By Proposition~\ref{prop:WplusSj_inApery}, $W(\ell)+s_j\in\operatorname{Ap}(S)$ if and only if $\ell+A_j<m$ and $\mathsf{m}_A(\ell+A_j)=\mathsf{m}_A(\ell)+1$. For $\ell+A_j<m$, one always has $\mathsf{m}_A(\ell+A_j)\le \mathsf{m}_A(\ell)+1$. Hence the latter equality fails exactly when $\mathsf{m}_A(\ell+A_j)\le \mathsf{m}_A(\ell)$, and the result follows.
\end{proof}

\begin{corollary}\label{cor:mminus1_maximal}
We have $W(m-1) \in \operatorname{Maximals}_{\preceq_S}(\operatorname{Ap}(S))$. In particular, $W(m-1)-m\in\operatorname{PF}(S)$.
\end{corollary}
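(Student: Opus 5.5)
The claim follows from Theorem~\ref{th:maxcrit_mA} applied with $\ell=m-1$. For every $j\in\{1,\ldots,n-1\}$ we have $A_j\ge A_1=1$, so $\ell+A_j=m-1+A_j\ge m$. Hence no index $j$ satisfies $\ell+A_j<m$, and the condition in Theorem~\ref{th:maxcrit_mA}, which ranges over all $j$ with $\ell+A_j<m$, holds vacuously. Therefore $W(m-1)$ is maximal in $\operatorname{Ap}(S)$ with respect to $\preceq_S$.

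For the second assertion, we apply Proposition~\ref{prop:PF1}. It says that $w-m\in\operatorname{PF}(S)$ for every $w\in\operatorname{Maximals}_{\preceq_S}\operatorname{Ap}(S)$. Taking $w=W(m-1)$ gives $W(m-1)-m\in\operatorname{PF}(S)$.

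We also check that $W(m-1)$ really is an element of $\operatorname{Ap}(S)$. Corollary~\ref{cor.OP} parametrizes $\operatorname{Ap}(S)$ by $W(\ell)$ with $0\le\ell\le m-1$, and $\ell=m-1$ lies in this range because $m>1$.

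There is no real obstacle here. The only point worth noting is that the criterion of Theorem~\ref{th:maxcrit_mA} becomes vacuous at the endpoint because $A_1=1$. If one prefers to avoid that theorem, one can argue directly from Proposition~\ref{prop:WplusSj_inApery}: $W(m-1)+s_j\notin\operatorname{Ap}(S)$ for every $j\ge1$, since $m-1+A_j\ge m$. The argument in the proof of Theorem~\ref{th:maxcrit_mA} then shows that this already gives maximality.
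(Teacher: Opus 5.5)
Your proof is correct and is the paper's argument: at $\ell=m-1$ every $j\ge 1$ gives $(m-1)+A_j\ge m$, so the criterion of Theorem~\ref{th:maxcrit_mA} holds vacuously. The pseudo-Frobenius claim then follows from Proposition~\ref{prop:PF1}.
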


\begin{proof}
For every $j\ge 1$ we have $(m-1)+A_j\ge m$, so the condition in Theorem~\ref{th:maxcrit_mA} is vacuous. The pseudo-Frobenius claim follows from Proposition~\ref{prop:PF1}.
\end{proof}

\subsection{Localizing maximal Apéry elements}

We now use Theorem~\ref{th:maxcrit_mA} to determine where the maximal elements of $\operatorname{Ap}(S)$ can appear.

\begin{proposition}\label{prop:tail_localization}
If $W(\ell) \in \operatorname{Maximals}_{\preceq_S}(\operatorname{Ap}(S))$, then $\ell+A_{n-1}\ge m$.
\end{proposition}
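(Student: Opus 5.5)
We argue by contraposition. Suppose $\ell+A_{n-1}<m$. By Theorem~\ref{th:maxcrit_mA} it suffices to find one $j$ with $\ell+A_j<m$ and $\mathsf m_A(\ell+A_j)=\mathsf m_A(\ell)+1$. The natural choice is $j=n-1$, since $\ell+A_{n-1}<m$ holds by assumption.

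Write $\mathbf u=\mathbf u^{(\ell)}=(u_1,\ldots,u_{n-1})\in R(a,n)$, and consider $\mathbf v=\mathbf u+e_{n-1}$, which represents $\ell+A_{n-1}$. First we check that $\mathbf v\in R(a,n)$. Since $\ell+A_{n-1}<m\le A_n=aA_{n-1}+1$, we have $\lfloor(\ell+A_{n-1})/A_{n-1}\rfloor\le a$. Hence $u_{n-1}+1\le a$, because $u_{n-1}+1$ is exactly the leading greedy digit of $\ell+A_{n-1}$. This holds since the lower part $\sum_{i<n-1}u_iA_i<A_{n-1}$, by the argument in Corollary~\ref{cor:order-u} applied to $R(a,n-1)$.

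Condition (b) of Definition~\ref{def:conj_R} needs care. If $u_{n-1}+1=a$, then $\ell+A_{n-1}\ge aA_{n-1}$. Combined with $\ell+A_{n-1}<m\le A_n=aA_{n-1}+1$, this forces $\ell+A_{n-1}=aA_{n-1}$, so the lower coordinates must all vanish. Thus $\mathbf v\in R(a,n)$ in either case, i.e., $\mathbf v$ is the canonical representative of $\ell+A_{n-1}$.

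By uniqueness (Proposition~\ref{prop:canonical}) and \eqref{minFL}, $\mathsf m_A(\ell+A_{n-1})=\sum_i v_i=\mathsf m_A(\ell)+1$. Proposition~\ref{prop:WplusSj_inApery} then gives $W(\ell)+s_{n-1}\in\operatorname{Ap}(S)$, so $W(\ell)$ is not maximal.

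The main obstacle is the boundary case $u_{n-1}+1=a$, where condition (b) could fail. The inequality $m\le A_n=aA_{n-1}+1$ handles it, and it is worth double-checking that $\ell+A_{n-1}$ stays below $A_n$ so that Proposition~\ref{prop:canonical} applies. This holds because $\ell+A_{n-1}<m\le A_n$.
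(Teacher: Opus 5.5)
Your proof is correct and follows essentially the same route as the paper: you assume $\ell+A_{n-1}<m$, show via $m\le A_n=aA_{n-1}+1$ that $\mathbf u^{(\ell)}+\mathbf e_{n-1}\in R(a,n)$ is the canonical representative of $\ell+A_{n-1}$, deduce $\mathsf m_A(\ell+A_{n-1})=\mathsf m_A(\ell)+1$, and apply Proposition~\ref{prop:WplusSj_inApery} with $j=n-1$. The only difference is cosmetic: you bound the last coordinate through the greedy leading digit, whereas the paper excludes the two obstructions ($u_{n-1}^{(\ell)}=a$, or $u_{n-1}^{(\ell)}=a-1$ with a positive lower coordinate) by direct inequalities.
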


\begin{proof}
Assume that $\ell+A_{n-1}<m$. We show that
$\mathsf{m}_A(\ell+A_{n-1})=\mathsf{m}_A(\ell)+1$, and then apply
Proposition~\ref{prop:WplusSj_inApery} with $j=n-1$.

Let $\mathbf u^{(\ell)}=(u_1^{(\ell)},\ldots,u_{n-1}^{(\ell)})$. Since
$\ell+A_{n-1}<m\le A_n=aA_{n-1}+1$, the vector
$\mathbf u^{(\ell)}+\mathbf e_{n-1}$ belongs to $R(a,n)$ and represents
$\ell+A_{n-1}$. Indeed, if $u_{n-1}^{(\ell)}=a$, then
$\ell+A_{n-1}\ge (a+1)A_{n-1}\ge A_n\ge m$, a contradiction. The only
remaining possible obstruction would be $u_{n-1}^{(\ell)}=a-1$ together
with some positive lower coordinate; but then
$\ell+A_{n-1}\ge aA_{n-1}+1=A_n\ge m$, again a contradiction.

Thus, by Proposition~\ref{prop:canonical},
$\mathbf u^{(\ell+A_{n-1})}=\mathbf u^{(\ell)}+\mathbf e_{n-1}$. Hence,
using \eqref{minFL},
$\mathsf{m}_A(\ell+A_{n-1})=\mathsf{m}_A(\ell)+1$. Therefore
$W(\ell)+s_{n-1}\in\operatorname{Ap}(S)$ by
Proposition~\ref{prop:WplusSj_inApery}, contradicting the maximality of
$W(\ell)$.
\end{proof}

Proposition~\ref{prop:tail_localization} shows that maximal elements can only occur near the upper end of $\{0,\ldots,m-1\}$. In terms of canonical representatives, only the two largest possible values of the last coordinate can occur.

We can express this localization in terms of the last coordinate of the
canonical representatives. For $\ell\in\{0,\ldots,m-1\}$, write
$\ell=u_{n-1}^{(\ell)}A_{n-1}+r(\ell)$, where
\[r(\ell):=\sum_{i=1}^{n-2}u_i^{(\ell)}A_i.\] In particular,
$m-1=u_{n-1}^{(m-1)}A_{n-1}+r(m-1)$.

\begin{corollary}\label{cor:two_layers}
If $W(\ell) \in \operatorname{Maximals}_{\preceq_S}(\operatorname{Ap}(S))$, then
$u_{n-1}^{(\ell)}\in\{u_{n-1}^{(m-1)}-1,u_{n-1}^{(m-1)}\}$.
Equivalently, if $u_{n-1}^{(\ell)}\le u_{n-1}^{(m-1)}-2$, then
$W(\ell)$ is not maximal.
\end{corollary}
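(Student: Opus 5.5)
Write $c:=u_{n-1}^{(m-1)}$. The plan is to combine Proposition~\ref{prop:tail_localization} with the order comparison of Corollary~\ref{cor:order-u}. Let $W(\ell)$ be maximal in $\operatorname{Ap}(S)$. Proposition~\ref{prop:tail_localization} gives $\ell\ge m-A_{n-1}$, equivalently $\ell>(m-1)-A_{n-1}$. Since $\ell\le m-1$, Corollary~\ref{cor:order-u} already yields $u_{n-1}^{(\ell)}\le c$. Indeed, if $u_{n-1}^{(\ell)}>c$, then the right-most nonzero entry of $\mathbf u^{(\ell)}-\mathbf u^{(m-1)}$ is in position $n-1$ and is positive, so $\ell>m-1$, which is impossible.

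For the lower bound, suppose $u_{n-1}^{(\ell)}\le c-2$; in particular $c\ge 2$. I would bound $\ell$ from above using the structure of $R(a,n)$. The prefix $(u_1^{(\ell)},\ldots,u_{n-2}^{(\ell)})$ lies in $R(a,n-1)$, so the argument in the proof of Corollary~\ref{cor:order-u} gives $r(\ell)<A_{n-1}$, that is, $r(\ell)\le A_{n-1}-1$. Hence
\[
\ell\le (c-2)A_{n-1}+A_{n-1}-1=(c-1)A_{n-1}-1 .
\]
On the other hand, $m-1\ge cA_{n-1}$ because $r(m-1)\ge 0$. Combining these,
\[
\ell+A_{n-1}\le cA_{n-1}-1<m,
\]
which contradicts Proposition~\ref{prop:tail_localization}. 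So $u_{n-1}^{(\ell)}\ge c-1$, and together with the upper bound this proves the first statement. The equivalent formulation is just the contrapositive.

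The only delicate point is the bound $r(\ell)<A_{n-1}$. It follows from Proposition~\ref{prop:canonical} applied with $n-1$ in place of $n$, because the truncated vector satisfies conditions (a) and (b) of Definition~\ref{def:conj_R}. When $n=2$ the sum $r(\ell)$ is empty and equals $0$, so the bound holds trivially. Apart from this, the argument is direct.
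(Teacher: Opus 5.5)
Your proof is correct and is essentially the paper's argument: Proposition~\ref{prop:tail_localization} confines $\ell$ to the window $m-A_{n-1}\le \ell\le m-1$, of length $A_{n-1}$, and this forces the last coordinate into $\{c-1,c\}$. The paper gets both bounds at once from the greedy identity $u_{n-1}^{(\ell)}=\lfloor \ell/A_{n-1}\rfloor$, which also gives $r(\ell)<A_{n-1}$ directly as a remainder; this is a cleaner justification than citing Proposition~\ref{prop:canonical}, which asserts existence and uniqueness of representatives but not the bound $\sum u_iA_i<A_{n-1}$ for an arbitrary vector of $R(a,n-1)$.
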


\begin{proof}
Since $A_{n-1}<m$, we have $u_{n-1}^{(m-1)}\ge 1$. By the greedy
construction in Proposition~\ref{prop:canonical},
$u_{n-1}^{(\ell)}=\lfloor \ell/A_{n-1}\rfloor$ for
$0\le \ell\le m-1$. If $W(\ell)$ is maximal, then
Proposition~\ref{prop:tail_localization} gives
$\ell\ge m-A_{n-1}=(u_{n-1}^{(m-1)}-1)A_{n-1}+r(m-1)+1$.
Since also
$\ell\le m-1=u_{n-1}^{(m-1)}A_{n-1}+r(m-1)$, it follows that
$\lfloor \ell/A_{n-1}\rfloor\in
\{u_{n-1}^{(m-1)}-1,u_{n-1}^{(m-1)}\}$. Hence the result follows.
\end{proof}

We now separate the two possible values of the last coordinate.

\begin{corollary}\label{cor:two_bands}
For $\ell\in\{0,\ldots,m-1\}$, if $W(\ell) \in \operatorname{Maximals}_{\preceq_S}(\operatorname{Ap}(S))$, then exactly one of the following
conditions holds:
\begin{enumerate}
\item[\rm(I)] $u_{n-1}^{(\ell)}=u_{n-1}^{(m-1)}$ and
$r(\ell)\le r(m-1)$;
\item[\rm(II)] $u_{n-1}^{(\ell)}=u_{n-1}^{(m-1)}-1$ and
$r(\ell)\ge r(m-1)+1$.
\end{enumerate}
\end{corollary}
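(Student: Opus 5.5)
The plan is to combine Corollary~\ref{cor:two_layers} with Proposition~\ref{prop:tail_localization}, and then finish with a short arithmetic comparison. Fix $\ell$ with $W(\ell)$ maximal and write $u:=u_{n-1}^{(m-1)}$. Corollary~\ref{cor:two_layers} already gives $u_{n-1}^{(\ell)}\in\{u-1,u\}$. These two values are distinct, so at most one of (I) and (II) can hold, and ``exactly one'' reduces to showing that the inequality on $r(\ell)$ required in each band actually holds.

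\emph{Case $u_{n-1}^{(\ell)}=u$.} Here we use the upper bound $\ell\le m-1$. Writing $\ell=uA_{n-1}+r(\ell)$ and $m-1=uA_{n-1}+r(m-1)$, the inequality $\ell\le m-1$ becomes $r(\ell)\le r(m-1)$. This is (I).

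\emph{Case $u_{n-1}^{(\ell)}=u-1$.} Here we use the lower bound from Proposition~\ref{prop:tail_localization}, namely $\ell\ge m-A_{n-1}$. Since $m-A_{n-1}=(u-1)A_{n-1}+r(m-1)+1$, as already computed in the proof of Corollary~\ref{cor:two_layers}, substituting $\ell=(u-1)A_{n-1}+r(\ell)$ gives $r(\ell)\ge r(m-1)+1$. This is (II).

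No step is a real obstacle. The only point to check is that $r(\ell)$ is the genuine remainder of $\ell$ modulo $A_{n-1}$ in the greedy algorithm, so that the decomposition $\ell=u_{n-1}^{(\ell)}A_{n-1}+r(\ell)$ is exact. This holds by the construction in Proposition~\ref{prop:canonical}, because $u_{n-1}^{(\ell)}=\lfloor \ell/A_{n-1}\rfloor$. Also, $u\ge1$ (since $A_{n-1}<m$), so $u-1\ge0$ is a legitimate value.
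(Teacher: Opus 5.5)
Your proposal is correct and follows essentially the same route as the paper. You use Corollary~\ref{cor:two_layers} to restrict the last coordinate, $\ell\le m-1$ to get band (I), and Proposition~\ref{prop:tail_localization} ($\ell+A_{n-1}\ge m$) to get band (II), with mutual exclusivity coming from the distinct last coordinates.
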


\begin{proof}
Let $W(\ell)$ be maximal. By Corollary~\ref{cor:two_layers},
$u_{n-1}^{(\ell)}\in
\{u_{n-1}^{(m-1)}-1,u_{n-1}^{(m-1)}\}$. If
$u_{n-1}^{(\ell)}=u_{n-1}^{(m-1)}$, then
$\ell=u_{n-1}^{(m-1)}A_{n-1}+r(\ell)$ and
$m-1=u_{n-1}^{(m-1)}A_{n-1}+r(m-1)$. Since $\ell\le m-1$, we get
$r(\ell)\le r(m-1)$. Otherwise,
$u_{n-1}^{(\ell)}=u_{n-1}^{(m-1)}-1$. By
Proposition~\ref{prop:tail_localization}, $\ell+A_{n-1}\ge m$, and
therefore
$u_{n-1}^{(m-1)}A_{n-1}+r(\ell)\ge
u_{n-1}^{(m-1)}A_{n-1}+r(m-1)+1$. Thus
$r(\ell)\ge r(m-1)+1$. The two alternatives are mutually exclusive, so
exactly one holds.
\end{proof}

\section{The symmetric family}\label{sec:symmetric_family}

Throughout this section we keep the standing hypotheses
$a\ge 2$, $m>1$, $\gcd(b,m)=1$, and $b\ge -(a-2)m-2$, together with the
notation introduced in Sections~\ref{sec:apery} and~\ref{sec:PF}. Thus
$S=S_{a,b}(m)$, $d=(a-1)m+b$, and $n$ is the smallest positive integer
such that $A_n\ge m$.

We characterize when $S$ is symmetric. Recall that this is equivalent to
$\operatorname{t}(S)=1$, or equivalently to
$\operatorname{PF}(S)=\{\operatorname{F}(S)\}$
\cite[Corollary~4.11]{libro}.

\begin{theorem}\label{th:type-one}
The semigroup $S_{a,b}(m)$ is symmetric if and only if either $n=2$, or $n\ge 3$, $u_1^{(m-1)}=a$, and $u_i^{(m-1)}=a-1$ for every $i=2,\ldots,n-2$.
\end{theorem}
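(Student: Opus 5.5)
We will use Theorem~\ref{th:maxcrit_mA} together with Corollary~\ref{cor:mminus1_maximal}. Since $W(m-1)$ is always maximal in $\operatorname{Ap}(S)$ with respect to $\preceq_S$, Proposition~\ref{prop:PF1} shows that $S$ is symmetric if and only if no $\ell\in\{0,\ldots,m-2\}$ gives a maximal $W(\ell)$. By Theorem~\ref{th:maxcrit_mA}, this holds exactly when every $\ell\le m-2$ admits some $j\in\{1,\ldots,n-1\}$ with $\ell+A_j<m$ and $\mathsf m_A(\ell+A_j)=\mathsf m_A(\ell)+1$. Since $A_1=1$, the case $j=1$ is always available. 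The whole proof therefore reduces to a combinatorial question about canonical representatives in $R(a,n)$ of integers below $m$.

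\emph{Case $n=2$.} Here $\mathbf u^{(\ell)}=(\ell)$, so $\mathsf m_A(\ell)=\ell$, and $j=1$ gives $\mathsf m_A(\ell+1)=\ell+1$ for every $\ell\le m-2$. Thus $S$ is symmetric.

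\emph{Case $n\ge3$, sufficiency.} Assume $\mathbf u^{(m-1)}=(a,a-1,\ldots,a-1,c)$ with $c=u_{n-1}^{(m-1)}$. Then $r(m-1)=aA_1+(a-1)(A_2+\cdots+A_{n-2})=A_{n-1}-1$, using $A_{i+1}-A_i=a^i$ and a telescoping computation. So $m=(c+1)A_{n-1}$, and band (II) of Corollary~\ref{cor:two_bands} is empty, since $r(\ell)\le A_{n-1}-1$ always. Only band (I) remains: $u_{n-1}^{(\ell)}=c$ with $r(\ell)<A_{n-1}-1$. For such $\ell$, we show that adding $A_1$ increases $\mathsf m_A$. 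If the canonical representative of $r(\ell)$ has no entry equal to $a$, adding $\mathbf e_1$ either stays in $R(a,n)$ (when $u_1<a-1$, or when $u_1=a-1$ and the remaining lower entries are zero), or it carries. The key observation is that the map $x\mapsto x+1$ on $[0,A_{n-1})$ increases $\mathsf m_A$ unless a carry occurs. Rather than analysing carries in full, we would choose $j$ cleverly: let $j$ be the smallest index with $u_j^{(\ell)}<a-1$ (when $u_1<a$), or handle the case $u_1=a$ separately. Then $\mathbf u^{(\ell)}+\mathbf e_j$ still lies in $R(a,n)$ and $\ell+A_j\le m-1$, which again follows from Corollary~\ref{cor:order-u}.

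\emph{Case $n\ge3$, necessity.} Assume $\mathbf u^{(m-1)}$ is not of the stated form. We will exhibit an explicit $\ell<m-1$ with $W(\ell)$ maximal, namely the largest $\ell<m$ whose canonical vector is maximal in the product order and differs from $\mathbf u^{(m-1)}$. Concretely, take the first position $i\le n-2$ where the pattern fails, and decrease the coordinate at position $i+1$ (or the last coordinate) by one while filling all lower coordinates with the maximal admissible pattern $(a,a-1,\ldots,a-1)$; alternatively, use the band (II) element $\ell=cA_{n-1}-1+\dots$. We then check the criterion of Theorem~\ref{th:maxcrit_mA}: every admissible $\ell+A_j<m$ either leaves $R(a,n)$, forcing a reduction with $\mathsf m_A$ not increasing, or exceeds $m-1$.

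We expect the main obstacle to be this verification, namely proving $\mathsf m_A(\ell+A_j)\le\mathsf m_A(\ell)$ for all $j$. This requires controlling the canonical form after adding $A_j$ to a vector with an $a$ in position $1$ and $a-1$ entries above it, using the identity $aA_j+A_k=A_{j+1}+aA_{k-1}$ from Lemma~\ref{lem:grevlex} to track the carries.
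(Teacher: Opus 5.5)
Your reduction (symmetry holds iff every $\ell\le m-2$ admits some $j$ with $\ell+A_j<m$ and $\mathsf m_A(\ell+A_j)=\mathsf m_A(\ell)+1$) is correct, and so is the case $n=2$. The case $n\ge3$, however, has genuine gaps in both directions.

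\textbf{Sufficiency.} The telescoping is wrong. Since $(a-1)A_i=a^i-1$,
\[
aA_1+(a-1)\sum_{i=2}^{n-2}A_i=a+\sum_{i=2}^{n-2}(a^i-1)=A_{n-1}-(n-2),
\]
so $m=(c+1)A_{n-1}-n+3$, not $(c+1)A_{n-1}$. Hence for $n\ge4$ band (II) of Corollary~\ref{cor:two_bands} is not empty. It contains the $n-3$ values $\ell=(c-1)A_{n-1}+r$ with $A_{n-1}-n+3\le r\le A_{n-1}-1$, and $j=1$ does not eliminate them. For instance, $a=2$, $n=4$, $c=1$ gives $m=13$. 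Then $\mathbf u^{(6)}=(0,2,0)$ and $\mathsf m_A(7)=1<2=\mathsf m_A(6)$, so $j=1$ fails; only $j=2$ works, since $\mathbf u^{(9)}=(2,0,1)$ and $9<13$. Handling this band needs the interband mechanism of Lemma~\ref{lem:interband-elimination}, which your proposal does not contain.

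Your band (I) rule also fails as stated. If $\mathbf u^{(\ell)}$ has an entry $a$ at some position $i\ge2$, then $u_1^{(\ell)}=0$, your rule selects $j=1$, and $\mathbf u^{(\ell)}+\mathbf e_1\notin R(a,n)$. Take $a=2$, $n=5$, $m=28$ (so $\mathbf u^{(27)}=(2,1,1,1)$) and $\ell=21$ with $\mathbf u^{(21)}=(0,2,0,1)$. Adding $A_1$ gives $\mathbf u^{(22)}=(0,0,1,1)$, so the length drops. One needs $j=2$, through the carry $(a+1)A_i=A_{i+1}+aA_{i-1}$.

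\textbf{Necessity.} This part is not yet an argument. Your $\ell$ is only described through alternatives, and the check of Theorem~\ref{th:maxcrit_mA}, which you yourself flag as the obstacle, is not carried out. The missing idea is that one fixed element works no matter how $\mathbf u^{(m-1)}$ fails the pattern. Take $\ell=qA_{n-1}+r_0$, where $r_0=aA_1+(a-1)\sum_{i=2}^{n-2}A_i$ and $q\in\{c-1,c\}$ is chosen so that $\ell\le m-1<\ell+A_{n-1}$ (Proposition~\ref{prop:vector_a_a-1}). Then $j=n-1$ is inadmissible. For $j\le n-2$, adding $A_j$ to $(a,a-1,\ldots,a-1,q)$ produces either a first coordinate $a+1$, or an entry $a$ at position $j$ alongside a positive first coordinate. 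The identities $(a+1)A_1=A_2$ and $aA_j+A_1=A_{j+1}$ then show the length does not increase, with no carry tracking at all. If the pattern fails, then $\ell\ne m-1$, so $W(\ell)$ is a second maximal element besides $W(m-1)$.

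The paper's sufficiency proof likewise avoids any band analysis. For this special vector, Lemma~\ref{lem:subtract_special} gives $\mathsf m_A(m-1-i)\le\mathsf m_A(m-1)-\mathsf m_A(i)$. So $W(m-1)-W(i)$ is $W(m-1-i)$ plus a nonnegative multiple of $m$, i.e.\ $W(i)\preceq_S W(m-1)$ for every $i<m-1$ (Corollary~\ref{cor:32}).
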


The proof relies on the construction of a distinguished maximal element of the Ap\'ery set and on a comparison with the endpoint $W(m-1)$.

\begin{proposition}\label{prop:vector_a_a-1}
Assume that $n\ge 3$. Then there exists $\ell\in\{0,\ldots,m-1\}$ such that $W(\ell) \in \operatorname{Maximals}_{\preceq_S}(\operatorname{Ap}(S))$, $u_1^{(\ell)}=a$, $u_i^{(\ell)}=a-1$ for $2\le i\le n-2$, and $u_{n-1}^{(\ell)}\in \{u_{n-1}^{(m-1)}-1,u_{n-1}^{(m-1)}\}$.
\end{proposition}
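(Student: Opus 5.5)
The plan is to write down $\ell$ explicitly and then verify maximality with the criterion of Theorem~\ref{th:maxcrit_mA}. Let $\mathbf v=(a,a-1,\ldots,a-1)\in R(a,n-1)$, with $n-2$ entries, and let $r^*=aA_1+(a-1)\sum_{k=2}^{n-2}A_k$ be the integer it represents.

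\textbf{Value of $r^*$.} Using $\sum_{k=1}^{n-2}A_k=(A_{n-1}-(n-1))/(a-1)$, which is a telescoping identity, we get $r^*=A_{n-1}-n+2<A_{n-1}$.

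\textbf{Extremal digit sum.} I claim that $\mathbf v$ has the largest possible digit sum among vectors of $R(a,n-1)$. Call this number $\sigma_{n-1}:=a+(a-1)(n-3)$. The argument goes by condition (b) of Definition~\ref{def:conj_R}. If $u_j=a$, then all lower entries vanish, so the sum is at most $a+(a-1)(n-2-j)\le\sigma_{n-1}$. Otherwise every entry is at most $a-1$, which again gives at most $\sigma_{n-1}$. By \eqref{minFL} and uniqueness of canonical representatives, every $t\in[0,A_{n-1})$ then satisfies $\mathsf m_A(t)\le\sigma_{n-1}$. The same bound holds one level down: every $t\in[0,A_{n-2})$ satisfies $\mathsf m_A(t)\le\sigma_{n-2}=\sigma_{n-1}-(a-1)$.

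\textbf{Choice of the last coordinate.} Put $u:=u_{n-1}^{(m-1)}$, which is at least $1$. If $r^*\le r(m-1)$, set $c=u$; otherwise set $c=u-1$. Let $\ell:=cA_{n-1}+r^*$.

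\textbf{Checking $\ell$ is admissible.} First, $0\le\ell\le m-1$. In the first case $\ell\le uA_{n-1}+r(m-1)=m-1$. In the second case $\ell<uA_{n-1}\le m-1$.

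Next, the vector $(\mathbf v,c)$ lies in $R(a,n)$. Indeed $c\le a$. Moreover, $c=a$ cannot occur: it would force $u=a$ and hence $r(m-1)=0<r^*$, which lands us in the second case with $c=a-1$. So Proposition~\ref{prop:canonical} gives $\mathbf u^{(\ell)}=(\mathbf v,c)$. This means $u_1^{(\ell)}=a$, $u_i^{(\ell)}=a-1$ for $2\le i\le n-2$, and $u_{n-1}^{(\ell)}=c\in\{u-1,u\}$. It also gives $\mathsf m_A(\ell)=c+\sigma_{n-1}$.

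\textbf{Maximality via Theorem~\ref{th:maxcrit_mA}.} Fix $j\in\{1,\ldots,n-1\}$ with $\ell+A_j<m$; we must show $\mathsf m_A(\ell+A_j)\le\mathsf m_A(\ell)$.

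For $j=n-1$: in the first case $\ell+A_{n-1}>m-1$, so there is nothing to check. In the second case $\ell+A_{n-1}=uA_{n-1}+r^*>uA_{n-1}+r(m-1)=m-1$, so again nothing to check.

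For $j\le n-2$ there are two sub-cases.
\begin{itemize}
\item If $r^*+A_j<A_{n-1}$, then $\ell+A_j=cA_{n-1}+t$ with $t<A_{n-1}$. Subadditivity of $\mathsf m_A$ and the extremal bound give $\mathsf m_A(\ell+A_j)\le c+\sigma_{n-1}=\mathsf m_A(\ell)$.
\item Otherwise $\ell+A_j=(c+1)A_{n-1}+t$ with $t=A_j-n+2<A_{n-2}$. Then $\mathsf m_A(\ell+A_j)\le c+1+\sigma_{n-2}=c+\sigma_{n-1}-(a-2)\le\mathsf m_A(\ell)$, using $a\ge 2$.
\end{itemize}

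Hence Theorem~\ref{th:maxcrit_mA} shows that $W(\ell)$ is maximal in $\operatorname{Ap}(S)$.

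\textbf{Expected obstacle.} The main care point is the edge bookkeeping. When $n=3$ the vector $\mathbf v$ is just $(a)$, and $\sigma_{n-2}$ is the digit sum over an empty vector, i.e.\ $0$; then $t=A_1-1=0$ and the inequality above still holds. The other point needing attention is the second case of the choice of $c$, where one must confirm that the carry into position $n-1$ is handled correctly.
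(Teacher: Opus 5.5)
Your proposal is correct. Up to the maximality check it follows the paper's proof. You use the same $r^*=r_0$ and the same choice of the last coordinate, according to whether $r_0\le r(m-1)$. You verify $\ell\le m-1$ the same way, and show that $(a,a-1,\ldots,a-1,c)$ is canonical in your notation by the same argument (a last coordinate equal to $a$ would force $r(m-1)=0$). You also observe, as the paper does, that $j=n-1$ is never admissible.

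You diverge in the case $j\le n-2$. The paper argues locally. It adds $\mathbf e_j$ to $\mathbf u^{(\ell)}$ and applies one exchange: $(a+1)A_1=A_2$ when $j=1$, or $aA_j+A_1=A_{j+1}$ when $2\le j\le n-2$. This gives a representation of $\ell+A_j$ of length $\mathsf m_A(\ell)-(a-1)$. It needs neither the closed form of $r_0$ nor any analysis of whether $r_0+A_j$ carries into position $n-1$.

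You argue globally. Since $(a,a-1,\ldots,a-1)$ has the largest coordinate sum in $R(a,n-1)$, you get $\mathsf m_A\le\sigma_{n-1}$ on $[0,A_{n-1})$. Subadditivity together with $r^*=A_{n-1}-n+2$ then handles both the no-carry and the carry cases.

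Your route is somewhat longer, but it proves more and explains why this vector is the right one. Your no-carry estimate uses nothing about $t$ beyond $t<A_{n-1}$. So $\mathsf m_A(\ell)=c+\sigma_{n-1}$ is in fact the maximum of $\mathsf m_A$ over the whole band $[cA_{n-1},(c+1)A_{n-1})$. Also, for $n\ge 4$, $r^*=A_{n-1}-1-(n-3)$ is exactly the $q=n-3$ case of Lemma~\ref{lem:full-level-canonical-vectors}.

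One harmless slip: for $n=3$ your two descriptions of $\sigma_{n-2}$ disagree. The formula $\sigma_{n-1}-(a-1)$ gives $1$, while the empty sum gives $0$. Both bound $\mathsf m_A(0)=0$, so the carry-case inequality holds either way.
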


\begin{proof}
Set $c:=u_{n-1}^{(m-1)}$ and $r_0:=aA_1+(a-1)\sum_{i=2}^{n-2}A_i$, where the sum is empty if
$n=3$. Then $0<r_0<A_{n-1}$. Choose 
\[
q=
\begin{cases}
c, & \text{if } r_0\le r(m-1),\\
c-1, & \text{if } r_0\ge r(m-1)+1,
\end{cases}
\]
and set $\ell=qA_{n-1}+r_0$.

We first check that $\ell\in\{0,\ldots,m-1\}$. Since $A_{n-1}<m$, we
have $c\ge 1$, and so $q\ge 0$. If $q=c$, then
$\ell=cA_{n-1}+r_0\le cA_{n-1}+r(m-1)=m-1$. If $q=c-1$, then
$\ell=(c-1)A_{n-1}+r_0\le cA_{n-1}-1\le m-1$.

We claim that $(a,a-1,\ldots,a-1,q)$ is the canonical representative of
$\ell$. It clearly represents $\ell$. Moreover, it belongs to $R(a,n)$.
Indeed, if $q=c$, then $c<a$: otherwise, since
$\mathbf u^{(m-1)}\in R(a,n)$, the equality $c=a$ would force
$r(m-1)=0$, contradicting $0<r_0\le r(m-1)$. If $q=c-1$, then $q<a$
is immediate. Thus Proposition~\ref{prop:canonical} proves the claim. In particular, the last coordinate is $q\in\{c-1,c\}$.

It remains to show that $W(\ell)$ is maximal. By
Theorem~\ref{th:maxcrit_mA}, it suffices to prove that
$\mathsf m_A(\ell+A_j)\le \mathsf m_A(\ell)$ for every
$j\in\{1,\ldots,n-1\}$ such that $\ell+A_j<m$.

The choice of $q$ gives $\ell+A_{n-1}\ge m$. If $q=c$, then
$\ell+A_{n-1}=cA_{n-1}+r_0+A_{n-1}\ge cA_{n-1}+r(m-1)+1=m$; if
$q=c-1$, then $r_0\ge r(m-1)+1$, and hence
$\ell+A_{n-1}=cA_{n-1}+r_0\ge cA_{n-1}+r(m-1)+1=m$. Therefore every
admissible $j$ satisfies $j\le n-2$.

Let $j\in\{1,\ldots,n-2\}$ with $\ell+A_j<m$. Adding $A_j$ to the
canonical representative of $\ell$ gives a representation of
$\ell+A_j$. If $j=1$, the identity $(a+1)A_1=A_2$ gives a representation
of length at most $\mathsf m_A(\ell)$. If $2\le j\le n-2$, then the
$j$-th coordinate becomes $a$ while the first coordinate is positive;
using $aA_j+A_1=A_{j+1}$, we again obtain a representation of length at
most $\mathsf m_A(\ell)$. Hence
$\mathsf m_A(\ell+A_j)\le \mathsf m_A(\ell)$ for every admissible $j$,
and Theorem~\ref{th:maxcrit_mA} implies that $W(\ell)$ is maximal.
\end{proof}

The following technical lemma will be used to compare minimum factorization lengths below a fixed integer $\ell$.

\begin{lemma}\label{lem:subtract_special}
Assume that $n\ge 3$, and let $\ell\in[0,A_n)\cap\mathbb{N}$ be such that $\mathbf u^{(\ell)}=(a,a-1,\ldots,a-1,q)$ for some $q<a$. Then, for every $i\in\{0,\ldots,\ell-1\}$, we have
\[
\mathsf{m}_A(\ell-i)\le \mathsf{m}_A(\ell)-\mathsf{m}_A(i).
\]
\end{lemma}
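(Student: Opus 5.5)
Since $\mathsf m_A(\ell)=\sum_k u^{(\ell)}_k$ and $\mathsf m_A(i)=\sum_k u^{(i)}_k$ by \eqref{minFL}, it suffices to exhibit, for each $i<\ell$, some representation $\mathbf w\in\mathbb N^{n-1}$ of $\ell-i$ with respect to $A$ satisfying $\sum_k w_k\le \sum_k u^{(\ell)}_k-\sum_k u^{(i)}_k$. The minimality of $\mathsf m_A(\ell-i)$, given by Lemma~\ref{lem:grevlex} and \eqref{minFL}, then yields the inequality. I would build $\mathbf w$ by a ``subtraction with borrowing'' of the canonical vectors. Write $\mathbf u=\mathbf u^{(\ell)}=(a,a-1,\ldots,a-1,q)$ and $\mathbf v=\mathbf u^{(i)}$. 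The case $i=0$ is trivial, so assume $0<i<\ell$.

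First step: let $t$ be the largest index with $u_t\neq v_t$. Since $i<\ell$, Corollary~\ref{cor:order-u} gives $v_t<u_t$. The coordinates above $t$ cancel. Now look at the coordinates $k<t$.
\begin{itemize}
\item If $v_k\le u_k$ for all $k<t$, then $\mathbf u-\mathbf v$ restricted to these coordinates is already a nonnegative representation of $\ell-i$, of length exactly $\sum u-\sum v$, and we are done.
\item Otherwise $v_k>u_k$ for some $k<t$. Since $u_1=a$ and $u_k=a-1$ for $2\le k\le n-2$, while $v_k\le a$, this forces $v_k=a$ with $k\ge 2$. By condition (b) in the definition of $R(a,n)$, $v$ then vanishes below $k$. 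Hence there is a unique such index $s$ (the largest one). The deficit is exactly one copy of $A_s$, and all coordinates $u_k$ with $k<s$ remain available untouched.
\end{itemize}

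Second step (the borrowing): take one unit from coordinate $t$, where $u_t-v_t\ge 1$, and expand
\[
A_t=A_s+\sum_{k=s}^{t-1}a^k=A_s+\sum_{k=s}^{t-1}\bigl((a-1)A_k+1\bigr).
\]
The copy of $A_s$ covers the deficit. The terms $(a-1)A_k$ add to coordinates $s,\ldots,t-1$. The leftover $t-s$ copies of $A_1=1$ need to be merged using the surplus $u_k$ for $k<s$, and in particular $u_1=a$. The merging uses the identities $(a+1)A_1=A_2$ and $aA_k+A_1=A_{k+1}$, exactly as in the proof of Proposition~\ref{prop:vector_a_a-1}, to reduce the length. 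Alternatively, one can observe directly that the resulting vector has nonnegative entries and count its length.

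The main obstacle is this length count. A naive borrow increases the length by roughly $(a-1)(t-s)+(t-s)-1$ relative to $\sum u-\sum v$, so the reduction identities must recover all of it. If the direct bookkeeping becomes unwieldy, I would switch to induction on $\mathsf m_A(i)$. The idea is to reduce to $i=A_j$ by proving the stronger single-step claim $\mathsf m_A(N-A_j)\le\mathsf m_A(N)-1$ for every $N$ in a class that contains all numbers of the special form $(a,a-1,\ldots,a-1,q)$ and is stable under subtracting a suitable part $A_j$ of $\mathbf v$. A natural candidate for this class is the set of $N$ whose canonical representative is coordinatewise at least $\mathbf v$ after one borrow. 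I would then verify the single-step claim directly from the greedy description in Proposition~\ref{prop:canonical}.
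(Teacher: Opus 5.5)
Your setup is right and matches the paper's. The unique index $s<t$ with $v_s>u_s$ (necessarily $v_s=a$, $s\ge 2$, and $v_k=0$ for $k<s$) is the paper's $\alpha$, and the top index $t$ you borrow from can serve as its $\beta$. The gap is that the step carrying the real content of the lemma is never carried out: shortening the borrowed representation back to length $\sum_k u_k-\sum_k v_k$. You flag it as ``the main obstacle'', and neither alternative you offer closes it.

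\begin{itemize}
\item Simply counting the length of the borrowed vector cannot work. Its length exceeds $\sum_k u_k-\sum_k v_k$ by exactly $a(t-s)>0$, not $a(t-s)-1$: the unit removed at position $t$ is offset by the unit that fills the deficit at $s$.
\item Pointing to Proposition~\ref{prop:vector_a_a-1} does not settle it either. There one merge suffices; here you need $t-s$ consecutive merges and must check that each one has its ingredients.
\item The fallback induction is only a sketch. The single-step inequality $\mathsf m_A(N-A_j)\le \mathsf m_A(N)-1$ is false for unrestricted $N$: for $N=A_2$ and $A_j=A_1$ it reads $a\le 0$. The class of admissible $N$ is never specified, nor shown to be stable.
\end{itemize}

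The missing idea is a telescoping cascade, and it is available on your own route. Put $p=t-s\ge 1$. After the borrow:
\begin{itemize}
\item position $1$ holds $a+p$;
\item positions $2,\ldots,s$ hold $a-1$ (position $s$ becomes $-1+1+(a-1)$);
\item positions $s+1,\ldots,t-1$ hold $(u_k-v_k)+(a-1)\ge a-1$, because $s$ is the only index below $t$ with $v_k>u_k$.
\end{itemize}
Since $s\ge 2$, we have $p+1\le t-1$. So you may apply $(a+1)A_1=A_2$, then $aA_2+A_1=A_3$, and so on up to $aA_p+A_1=A_{p+1}$. For $j\ge 2$, the $j$-th step uses the $a-1$ copies of $A_j$ already present, the copy created by the previous step, and one of the $p-1$ surplus copies of $A_1$. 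In closed form,
\[
(a+p)A_1+(a-1)\sum_{k=2}^{p+1}A_k=aA_{p+1}.
\]
Each step lowers the length by exactly $a$. The total saving $ap$ therefore cancels the excess and leaves a nonnegative representation of $\ell-i$ of length exactly $\mathsf m_A(\ell)-\mathsf m_A(i)$.

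Borrow plus cascade is precisely the paper's one-shot, length-preserving identity $aA_1+(a-1)\sum_{k=2}^{\alpha-1}A_k-A_\alpha+A_\beta=aA_{\beta-\alpha+1}+(a-1)\sum_{h=\beta-\alpha+2}^{\beta-1}A_h$, with $\alpha=s$ and $\beta=t$. Once you add this step, your argument is complete and coincides in substance with the paper's proof.
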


\begin{proof}
Set $\mathbf v:=\mathbf u^{(\ell)}-\mathbf u^{(i)}
=(v_1,\ldots,v_{n-1})\in\mathbb Z^{n-1}$. Since $i<\ell$,
Corollary~\ref{cor:order-u} implies that the right-most nonzero coordinate of
$\mathbf v$ is positive. Moreover,
$\ell-i=\sum_{t=1}^{n-1}v_tA_t$.

If $\mathbf v\in\mathbb N^{n-1}$, then $\mathbf v$ is a nonnegative
representation of $\ell-i$. Therefore, by the definition of $\mathsf m_A$ and by~\eqref{minFL},
\[
\mathsf m_A(\ell-i)\le \sum_{t=1}^{n-1}v_t
=
\sum_{t=1}^{n-1}u_t^{(\ell)}
-
\sum_{t=1}^{n-1}u_t^{(i)}
=
\mathsf m_A(\ell)-\mathsf m_A(i),
\]
and the desired inequality follows.

Assume now that $\mathbf v\notin\mathbb N^{n-1}$. Since
$\mathbf u^{(\ell)}=(a,a-1,\ldots,a-1,q)$ with $q<a$, the last coordinate
$v_{n-1}$ cannot be negative; otherwise, it would be the right-most nonzero
coordinate of $\mathbf v$, contradicting Corollary~\ref{cor:order-u}. Moreover,
$v_1\ge0$, since $u_1^{(i)}\le a=u_1^{(\ell)}$. Hence any negative coordinate
must occur among the positions $2,\ldots,n-2$; in particular, this case cannot
occur when $n=3$.

Thus $n\ge4$. Let $\alpha\in\{2,\ldots,n-2\}$ be a negative coordinate. Then
necessarily $u_\alpha^{(i)}=a$, and so $v_\alpha=-1$. Since
$\mathbf u^{(i)}\in R(a,n)$, we have $u_t^{(i)}=0$ for every $t<\alpha$.
Hence $v_1=a$ and $v_t=a-1$ for $2\le t\le\alpha-1$.

We claim that $\alpha$ is the unique negative coordinate of $\mathbf v$. Indeed,
if $v_t<0$ for some $\alpha<t\le n-2$, then $u_t^{(i)}=a$, because
$u_t^{(\ell)}=a-1$. By the defining condition of $R(a,n)$ applied at $t$, we
would have $u_\alpha^{(i)}=0$, a contradiction. Thus $\alpha$ is unique.

Since the right-most nonzero coordinate of $\mathbf v$ is positive and
$v_\alpha<0$, there exists $\beta>\alpha$ such that $v_\beta>0$. The identity
needed below follows from $A_j=\sum_{h=0}^{j-1}a^h$ and
$(a-1)A_j=a^j-1$:

\[
\begin{aligned}
aA_1+(a-1)\sum_{t=2}^{\alpha-1}A_t-A_\alpha+A_\beta
&=
a+\sum_{t=2}^{\alpha-1}(a^t-1)-\sum_{h=0}^{\alpha-1}a^h+A_\beta \\
&=
A_\beta-(\alpha-1) \\
&=
\sum_{h=1}^{\beta-1}a^h-(\alpha-2) \\
\end{aligned}\]
\[\begin{aligned}
 \hspace{5.6cm}   &=
\sum_{h=1}^{\beta-\alpha+1}a^h
+
\sum_{h=\beta-\alpha+2}^{\beta-1}(a^h-1) \\
&=
aA_{\beta-\alpha+1}
+
(a-1)\sum_{h=\beta-\alpha+2}^{\beta-1}A_h,
\end{aligned}
\]
where sums with lower index greater than upper index are omitted.

Using this identity, in the integer representation of $\ell-i$ given by $\mathbf v$ we replace the contribution
$aA_1+(a-1)\sum_{t=2}^{\alpha-1}A_t-A_\alpha+A_\beta$
by $aA_{\beta-\alpha+1} + (a-1)\sum_{h=\beta-\alpha+2}^{\beta-1}A_h$. This produces a vector $\mathbf w\in\mathbb N^{n-1}$ still representing
$\ell-i$: before the second contribution is added, the coordinates
$1,\ldots,\alpha$ become zero, the coordinate $\beta$ decreases by one and
remains nonnegative because $v_\beta>0$, and all other coordinates remain
nonnegative because $\alpha$ is the unique negative coordinate of $\mathbf v$;
then the added right-hand side contributes only nonnegative coefficients.

Moreover, the replacement preserves the total coefficient sum, since both sides
of the identity have coefficient sum $a+(\alpha-2)(a-1)$. Hence
$\sum_{t=1}^{n-1}w_t=\sum_{t=1}^{n-1}v_t$. Therefore, again by the definition of $\mathsf m_A$ and by~\eqref{minFL},
\[
\mathsf m_A(\ell-i)
\le
\sum_{t=1}^{n-1}w_t
=
\sum_{t=1}^{n-1}v_t
=
\mathsf m_A(\ell)-\mathsf m_A(i),
\]
as claimed.
\end{proof}

\begin{corollary}\label{cor:32}
Assume that $n\ge 3$, and let $\ell\in\{1,\ldots,m-1\}$ be the integer constructed in the proof of
Proposition~\ref{prop:vector_a_a-1}. Then
$W(i)\preceq_S W(\ell)$ for every $i\in\{0,\ldots,\ell-1\}$.
\end{corollary}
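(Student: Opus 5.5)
The plan is to show directly that $W(\ell)-W(i)\in S$ for each $i\in\{0,\ldots,\ell-1\}$, using the parametrization $W(x)=dx+\mathsf m_A(x)m$ from Corollary~\ref{cor.OP} together with Lemma~\ref{lem:subtract_special}. By Proposition~\ref{prop:vector_a_a-1}, the canonical representative of $\ell$ is $(a,a-1,\ldots,a-1,q)$ with $q\in\{c-1,c\}$. In its proof we also checked that $q<a$. So the hypotheses of Lemma~\ref{lem:subtract_special} are met, and $\ell\ge r_0>0$ guarantees $\ell\in\{1,\ldots,m-1\}$.

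Fix $i$ with $0\le i<\ell$. Then
\[
W(\ell)-W(i)=d(\ell-i)+\bigl(\mathsf m_A(\ell)-\mathsf m_A(i)\bigr)m .
\]
Set $\lambda=\ell-i$, so $1\le\lambda\le \ell<m<A_n$, and choose the canonical representative $\mathbf u^{(\lambda)}$. We have
\[
\sum_{t=1}^{n-1}u^{(\lambda)}_t s_t=\mathsf m_A(\lambda)m+d\lambda .
\]
This element belongs to $S$, and in fact equals $W(\lambda)$. Hence
\[
W(\ell)-W(i)=W(\lambda)+\bigl(\mathsf m_A(\ell)-\mathsf m_A(i)-\mathsf m_A(\ell-i)\bigr)m .
\]
Lemma~\ref{lem:subtract_special} gives exactly that the coefficient of $m$ is a nonnegative integer. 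Therefore $W(\ell)-W(i)$ is a sum of an element of $S$ and a nonnegative multiple of $s_0=m$, so it lies in $S$. This is the statement $W(i)\preceq_S W(\ell)$.

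All the difficulty sits in Lemma~\ref{lem:subtract_special}, which we may assume. The remaining step is only the bookkeeping check that the lemma's hypothesis on $\mathbf u^{(\ell)}$ holds for the constructed $\ell$. In particular, $q<a$: when $q=c$ this was argued in Proposition~\ref{prop:vector_a_a-1}, and when $q=c-1$ it holds because $c\le a$.
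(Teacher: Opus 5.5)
Your proposal is correct and follows the paper's proof: you write $W(\ell)-W(i)=W(\ell-i)+\bigl(\mathsf m_A(\ell)-\mathsf m_A(i)-\mathsf m_A(\ell-i)\bigr)m$ and get the nonnegativity of the coefficient from Lemma~\ref{lem:subtract_special}. Your explicit checks that $q<a$ and $\ell\ge 1$ are a useful addition; the only slip is writing $m<A_n$, since in general only $m\le A_n$ holds, but this still gives the needed bound $\lambda\le m-1<A_n$.
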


\begin{proof}
Let $i\in\{0,\ldots,\ell-1\}$. By Lemma~\ref{lem:subtract_special}, we have
$\mathsf{m}_A(\ell-i)\le \mathsf{m}_A(\ell)-\mathsf{m}_A(i)$. Hence, by the definition of $W$,
$W(\ell)-W(i)=W(\ell-i)+\bigl(\mathsf{m}_A(\ell)-\mathsf{m}_A(i)-\mathsf{m}_A(\ell-i)\bigr)m$.
Since $\ell-i\in\{1,\ldots,m-1\}$, Corollary~\ref{cor.OP} gives
$W(\ell-i)\in\operatorname{Ap}(S)\subseteq S$, and the coefficient of $m$ is nonnegative.
Therefore $W(\ell)-W(i)\in S$, which means $W(i)\preceq_S W(\ell)$.
\end{proof}

We are now ready to prove the characterization.

\begin{proof}[Proof of Theorem \ref{th:type-one}]
By the equivalence between symmetry and type one recalled above, it is enough
to prove the stated characterization for $\operatorname{t}(S)=1$.

If $n=2$, then $A=\{A_1\}=\{1\}$. Hence $W(i)=i(d+m)=is_1$ for every
$0\le i\le m-1$, and therefore $W(i)\preceq_S W(j)$ whenever
$0\le i\le j\le m-1$. Since $\operatorname{Ap}(S)=\{W(0),\ldots,W(m-1)\}$,
we get that $W(m-1)$ is the unique maximal element of $\operatorname{Ap}(S)$.
By Proposition~\ref{prop:PF1}, $\operatorname{t}(S)=1$.

Assume now that $n\ge 3$, and set $c:=u_{n-1}^{(m-1)}$. Suppose first
that $u_1^{(m-1)}=a$ and $u_i^{(m-1)}=a-1$ for every
$i=2,\ldots,n-2$. Then
$\mathbf u^{(m-1)}=(a,a-1,\ldots,a-1,c)$. Moreover, $1\le c\le a-1$:
indeed, $c=0$ would imply $m-1<A_{n-1}$, while $c=a$ is impossible
because $\mathbf u^{(m-1)}\in R(a,n)$ and its first coordinate is $a$. 

Let $r_0$ be the
integer used in the proof of Proposition~\ref{prop:vector_a_a-1}, namely
$r_0=aA_1$ if $n=3$, and
$r_0=aA_1+(a-1)\sum_{i=2}^{n-2}A_i$ if $n>3$. Since
$\mathbf{u}^{(m-1)}=(a,a-1,\ldots,a-1,c)$, we have $r(m-1)=r_0$. Hence the
construction in Proposition~\ref{prop:vector_a_a-1} gives $q=c$ and
$\ell=cA_{n-1}+r_0=cA_{n-1}+r(m-1)=m-1$. Therefore, by
Corollary~\ref{cor:32}, $W(i)\preceq_S W(m-1)$ for every $0\le i<m-1$.
Moreover, $W(i)\ne W(m-1)$ for every $i<m-1$, since
$W(r)\equiv dr\pmod m$ and $\gcd(d,m)=1$. Thus no $W(i)$ with $i<m-1$ is
maximal. Since $W(m-1)$ is maximal by Corollary~\ref{cor:mminus1_maximal},
and $\operatorname{Ap}(S)=\{W(0),\ldots,W(m-1)\}$, we conclude that
$W(m-1)$ is the unique maximal element of $\operatorname{Ap}(S)$. By
Proposition~\ref{prop:PF1}, $\operatorname{t}(S)=1$.

Conversely, assume that $\operatorname{t}(S)=1$. By
Corollary~\ref{cor:mminus1_maximal}, $W(m-1) \in \operatorname{Maximals}_{\preceq_S}(\operatorname{Ap}(S))$. Since $\operatorname{t}(S)=1$,
Proposition~\ref{prop:PF1} implies that $W(m-1)$ is the unique maximal
element of $\operatorname{Ap}(S)$.

By Proposition~\ref{prop:vector_a_a-1}, there exists
$\ell\in\{0,\ldots,m-1\}$ such that $W(\ell)$ is maximal and
$\mathbf{u}^{(\ell)}=(a,a-1,\ldots,a-1,q)$ for some $q\in\{c-1,c\}$. By uniqueness
of the maximal element, $W(\ell)=W(m-1)$. Since $W(r)\equiv dr\pmod m$ and
$\gcd(d,m)=1$, this gives $\ell\equiv m-1\pmod m$. As
$\ell,m-1\in\{0,\ldots,m-1\}$, we obtain $\ell=m-1$. Consequently, $\mathbf{u}^{(m-1)}=(a,a-1,\ldots,a-1,q)$. By definition
$c=u^{(m-1)}_{n-1}$, so $q=c$, and therefore
$\mathbf{u}^{(m-1)}=(a,a-1,\ldots,a-1,c)$.

Finally, since $\mathbf{u}^{(m-1)}\in R(a,n)$ and its first coordinate is $a$, the defining
condition of $R(a,n)$ gives $c\le a-1$. Moreover, $c\ge 1$, because
$m-1\ge A_{n-1}$. Hence $c\in\{1,\ldots,a-1\}$.
\end{proof}

Theorem~\ref{th:type-one} gives a complete characterization of symmetry
in this family. In particular, for $n\ge 4$, symmetry is equivalent to
the existence of $c\in\{1,\ldots,a-1\}$ such that
\[
\mathbf u^{(m-1)}=(a,a-1,\ldots,a-1,c).
\]
Equivalently, $m=(c+1)A_{n-1}-n+3$ for some
$c\in\{1,\ldots,a-1\}$. In this case, Corollary~\ref{cor:FrobGen} gives
$\operatorname{F}(S) = d(m-1)+\bigl((n-2)(a-1)+c\bigr)m$.

\section{Bound for the type}\label{sec:type_bound}

We keep the notation of Section~\ref{sec:PF}. The goal of this section is to prove the following bound, which shows that the type is controlled by the embedding dimension.

\begin{theorem}\label{thm:type-bound}
With the notation of Section~\ref{sec:PF}, the type of $S$ satisfies $\operatorname{t}(S)\le n-1$.
\end{theorem}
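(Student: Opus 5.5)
The plan is to count maximal elements of $\operatorname{Ap}(S)$ under $\preceq_S$ and apply Proposition~\ref{prop:PF1}. Write $W(\ell)$, $0\le\ell\le m-1$, as in Section~\ref{sec:PF}. By Corollary~\ref{cor:mminus1_maximal}, $W(m-1)$ is always maximal, so it suffices to show that at most $n-2$ indices $\ell<m-1$ give maximal elements.

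\emph{Step 1: sort the indices $\ell<m-1$ into classes.} For $\ell<m-1$, Corollary~\ref{cor:order-u} says the right-most nonzero entry of $\mathbf u^{(m-1)}-\mathbf u^{(\ell)}$ is positive. Let $k(\ell)\in\{1,\ldots,n-1\}$ be its position. Then $\mathbf u^{(\ell)}$ agrees with $\mathbf u^{(m-1)}$ in all coordinates $>k(\ell)$, and $u^{(\ell)}_{k(\ell)}<u^{(m-1)}_{k(\ell)}$. The goal is to show two things: class $k=1$ contains no maximal element, and each class $k\in\{2,\ldots,n-1\}$ contains at most one. This gives $\operatorname{t}(S)\le 1+(n-2)=n-1$.

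\emph{Step 2: the basic non-maximality tool.} If $\mathbf u^{(\ell)}+\mathbf e_j\in R(a,n)$ and $\ell+A_j<m$, then by Proposition~\ref{prop:canonical} this vector is the canonical representative of $\ell+A_j$. Hence $\mathsf m_A(\ell+A_j)=\mathsf m_A(\ell)+1$, and $W(\ell)$ is not maximal by Theorem~\ref{th:maxcrit_mA}, exactly as in Proposition~\ref{prop:tail_localization}.

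For $k(\ell)=1$: then $\ell=m-1-(u_1^{(m-1)}-u_1^{(\ell)})$, so $\ell+A_1\le m-1$. Also $\mathbf u^{(\ell)}+\mathbf e_1$ agrees with $\mathbf u^{(m-1)}$ above position $1$ and has first coordinate $\le u^{(m-1)}_1$. It is therefore dominated coordinatewise by $\mathbf u^{(m-1)}\in R(a,n)$ with the same support pattern, so it lies in $R(a,n)$. Hence class $1$ contributes nothing.

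\emph{Step 3: at most one maximal element per class $k\ge 2$.} This is the step I expect to be the main obstacle. Fix $k\ge2$ and a maximal $\ell$ in class $k$. I would show that maximality forces the whole vector $\mathbf u^{(\ell)}$:
\begin{itemize}
\item[(i)] the coordinates above $k$ are those of $\mathbf u^{(m-1)}$;
\item[(ii)] $u^{(\ell)}_k=u^{(m-1)}_k-1$;
\item[(iii)] the coordinates $1,\ldots,k-1$ form the saturated block $(a,a-1,\ldots,a-1)$ of Proposition~\ref{prop:vector_a_a-1}, or are zero when some coordinate above $k$ equals $a$.
\end{itemize}

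For (ii), if $u^{(\ell)}_k\le u^{(m-1)}_k-2$, then adding $\mathbf e_k$ keeps the index $\le m-1$. One checks that it keeps the vector in $R(a,n)$ unless the lower block is already saturated. In that saturated case, I would instead add $\mathbf e_1$ (or use Corollary~\ref{cor:32}-type comparisons) to exhibit a larger Apéry element.

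For (iii), if some $u_j^{(\ell)}$ with $j<k$ is below its saturated value, I would choose the smallest such $j$ and show that $\mathbf u^{(\ell)}+\mathbf e_j\in R(a,n)$. The index stays $<m$ because the new vector still has coordinate $k$ below that of $\mathbf u^{(m-1)}$ and agrees above $k$, so Corollary~\ref{cor:order-u} applies. Step 2 then contradicts maximality.

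The delicate part is the interaction with condition (b) of Definition~\ref{def:conj_R} when $u^{(m-1)}_t=a$ for some $t>k$, which forces zeros below $t$. This requires a separate short case analysis, and there the class is in fact empty or forced as well. Once (i)--(iii) are established, each class $k\ge2$ contains at most one $\ell$, which completes the count.

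If Step 3 resists a direct argument, the fallback is an induction on $n$, the number of available weights $A_1,\ldots,A_{n-1}$. Removing the last coordinate reduces to the analogous problem with $A_{n-1}$ in place of $m$. Corollary~\ref{cor:two_bands} then shows that the band $u_{n-1}^{(\ell)}=u_{n-1}^{(m-1)}$ behaves like the smaller instance, while the band $u_{n-1}^{(\ell)}=u_{n-1}^{(m-1)}-1$ contributes at most one new maximal element.
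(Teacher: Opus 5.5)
There is a genuine gap in Step~3: claim (iii) is false, and so is the conclusion that each class $k\ge 2$ contains at most one maximal element.

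Take $a=2$, $b=1$, $m=8$. Then $d=9$, $n=4$ and $S=\langle 8,17,35,71\rangle$. The canonical representatives are $\mathbf u^{(7)}=(0,0,1)$, $\mathbf u^{(6)}=(0,2,0)$ and $\mathbf u^{(5)}=(2,1,0)$, with $\mathsf m_A(5)=3$, $\mathsf m_A(6)=2$, $\mathsf m_A(7)=1$. Theorem~\ref{th:maxcrit_mA} shows that $W(5)$, $W(6)$, $W(7)$ are exactly the maximal elements, so $\operatorname{PF}(S)=\{61,62,63\}$. Both $\ell=5$ and $\ell=6$ lie in class $k=3$. Moreover, the lower block $(0,2)$ of $\mathbf u^{(6)}$ is neither $(a,a-1)=(2,1)$ nor zero.

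Your argument for (iii) breaks exactly here. The smallest unsaturated index is $j=1$, but $(1,2,0)\notin R(2,4)$: condition~(b) of Definition~\ref{def:conj_R} is triggered by a coordinate equal to $a$ \emph{inside} the lower block, not above $k$. Accordingly $\mathsf m_A(7)=1<\mathsf m_A(6)$, and no larger Ap\'ery element is produced.

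In general, the lower block of a maximal element of class $k$ is the canonical representative of $A_k-1-q$ for some $0\le q\le k-2$ (Lemmas~\ref{lem:full-level-canonical-vectors} and~\ref{lem:full-level-maximals}). So a single class can hold up to $k-1$ maximal elements. For $m=cA_{n-1}+1$, Theorem~\ref{th:arith_family_PF} puts $n-2$ of them in class $n-1$.

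Your fallback fails for the same reason: the band $u_{n-1}^{(\ell)}=u_{n-1}^{(m-1)}-1$ can contribute $n-2$ maximal elements, not one. Counting per class or per band cannot give $n-1$; what is missing is a trade-off between the bands.

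The paper obtains this trade-off as follows. The upper band is controlled by induction. The induction also carries the quantitative fact that if the upper band contributes $p\ge 2$ elements, then $r(m-1)\ge A_p-p+1$ (second assertion of Proposition~\ref{prop:combinatorial-type-bound}). Writing $c=u^{(m-1)}_{n-1}$, this makes $j=q+2$ admissible for each lower candidate $\ell_q=(c-1)A_{n-1}+A_{n-1}-1-q$ with $q\le p-2$. At that $j$, $\mathsf m_A$ strictly increases (Lemma~\ref{lem:interband-elimination}), so those candidates are not maximal. Hence at most $n-1-p$ lower-band candidates survive, and the total is at most $p+(n-1-p)=n-1$.

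Your treatment of class~$1$ and your claim~(ii) are correct. For (ii) no saturation caveat is even needed, since $u^{(\ell)}_k+1\le u^{(m-1)}_k-1<a$. But these facts do not control the total count.
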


For $k\ge 1$, set $A^{(k)}:=\{A_1,\ldots,A_k\}$. Whenever canonical representatives with respect to $A_1,\ldots,A_k$ are used, Proposition~\ref{prop:canonical} and Lemma~\ref{lem:grevlex} are applied with $k+1$ in place of $n$. Thus, for each $0\le x<A_{k+1}$, the number $\mathsf m_{A^{(k)}}(x)$ is the sum of the coordinates of the canonical representative of $x$ with respect to $A_1,\ldots,A_k$. We shall use this fact without further mention.

We first record three elementary properties of the minimum length functions $\mathsf m_{A^{(k)}}$.

\begin{lemma}\label{lem:full-level-canonical-vectors}
Let $k\ge 2$. For each $q\in\{0,\ldots,k-1\}$, the canonical representative of $A_{k+1}-1-q$ with respect to $A_1,\ldots,A_k$ is
\[
\mathbf u^{(A_{k+1}-1-q)}=
\begin{cases}
(0,\ldots,0,a), & q=0,\\[1mm]
(\underbrace{0,\ldots,0}_{k-q-1\text{ times}},\,a,\,
\underbrace{a-1,\ldots,a-1}_{q-1\text{ times}},\,a-1), & 1\le q\le k-1.
\end{cases}
\]
In particular, $\mathsf m_{A^{(k)}}(A_{k+1}-1-q)=a+q(a-1)$ for all $q=0,\ldots,k-1$.
\end{lemma}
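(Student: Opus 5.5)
The approach is to verify directly that each displayed vector represents $A_{k+1}-1-q$ and lies in $R(a,k+1)$. The uniqueness in Proposition~\ref{prop:canonical}, applied with $k+1$ in place of $n$, then identifies it as the canonical representative. The formula for $\mathsf m_{A^{(k)}}$ follows by summing coordinates, as recalled just before the lemma.

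\emph{Membership in $R(a,k+1)$.} For $q=0$ the vector $(0,\ldots,0,a)$ has its only entry $a$ in the last position, with zeros before it, so it lies in $R(a,k+1)$. For $1\le q\le k-1$ the vector has zeros in positions $1,\ldots,k-q-1$, the entry $a$ in position $k-q$, and $a-1$ in positions $k-q+1,\ldots,k$. All entries lie in $[0,a]$. The only coordinate equal to $a$ is preceded solely by zeros, so condition (b) holds. Note that $k-q\ge 1$ because $q\le k-1$, so the position of $a$ is valid.

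\emph{Representation.} For $q=0$ we have $aA_k=A_{k+1}-1$, which is correct. For $q\ge 1$, write $p=k-q$ and use $(a-1)A_t=a^t-1$ and $aA_p=A_{p+1}-1$. Then
\[
aA_p+(a-1)\sum_{t=p+1}^{k}A_t=A_{p+1}-1+\sum_{t=p+1}^{k}(a^t-1)=A_{p+1}+\sum_{t=p+1}^{k}a^t-1-q.
\]
Since $A_{p+1}+\sum_{t=p+1}^{k}a^t=\sum_{h=0}^{k}a^h=A_{k+1}$, the sum equals $A_{k+1}-1-q$. The number of entries equal to $a-1$ is $k-p=q$, which matches the displayed pattern of $q-1$ middle entries plus the final entry. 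We also need $0\le A_{k+1}-1-q<A_{k+1}$, which holds since $q\le k-1<A_{k+1}$.

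\emph{Conclusion.} The coordinate sum is $a$ for $q=0$ and $a+q(a-1)$ otherwise, which gives the stated value of $\mathsf m_{A^{(k)}}$. The only point needing care is the telescoping identity above; everything else is bookkeeping. The hypothesis $k\ge 2$ only matters because $q$ ranges up to $k-1$.
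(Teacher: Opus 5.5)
Your proposal is correct and is essentially the paper's proof. You exhibit the vector, check that it lies in $R(a,k+1)$, verify via $(a-1)A_t=a^t-1$ that it represents $A_{k+1}-1-q$, and conclude by the uniqueness in Proposition~\ref{prop:canonical}; the only cosmetic difference is that you telescope $aA_{k-q}$ together with all $q$ terms $(a-1)A_t$, whereas the paper first splits off $(a-1)A_k$ and shows the remaining terms sum to $A_k-q$.
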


\begin{proof}
For $q=0$, we have $A_{k+1}-1=aA_k$, and the vector $(0,\ldots,0,a)$ represents $A_{k+1}-1$. Since it belongs to $R(a,k+1)$, it is the canonical representative.

Let now $1\le q\le k-1$, and consider the vector
\[
\mathbf v_q=(\underbrace{0,\ldots,0}_{k-q-1\text{ times}},\,a,\,
\underbrace{a-1,\ldots,a-1}_{q-1\text{ times}},\,a-1).
\]
This vector belongs to $R(a,k+1)$: its only coordinate equal to $a$ occurs at position $k-q$, and all previous coordinates are zero. Moreover,
\[
\sum_{i=1}^k(\mathbf v_q)_iA_i
=(a-1)A_k+aA_{k-q}+\sum_{i=k-q+1}^{k-1}(a-1)A_i.
\]
Using $(a-1)A_i=a^i-1$, the last two terms give $A_k-q$. Hence the above sum is $(a-1)A_k+A_k-q=aA_k-q=A_{k+1}-1-q$. Thus $\mathbf v_q$ is the canonical representative of $A_{k+1}-1-q$. The formula for $\mathsf m_{A^{(k)}}(A_{k+1}-1-q)$ follows by summing the coordinates.
\end{proof}

\begin{lemma}\label{lem:splitting-lambda}
Let $k\ge 2$. If $0\le q\le a$, $0\le r<A_k$, and $qA_k+r<A_{k+1}$, then
\[
\mathsf m_{A^{(k)}}(qA_k+r)=\mathsf m_{A^{(k-1)}}(r)+q.
\]
\end{lemma}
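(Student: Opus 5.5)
The plan is to read off the canonical representative of $qA_k+r$ with respect to $A_1,\ldots,A_k$ directly from that of $r$ with respect to $A_1,\ldots,A_{k-1}$. Once this is done, the identity follows by summing coordinates, since $\mathsf m_{A^{(k)}}$ and $\mathsf m_{A^{(k-1)}}$ are coordinate sums of canonical representatives (Proposition~\ref{prop:canonical} and Lemma~\ref{lem:grevlex}, applied with $k+1$ and $k$ in place of $n$).

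Since $0\le r<A_k$, Proposition~\ref{prop:canonical} (with $k$ in place of $n$) gives a unique vector $\mathbf u^{(r)}=(u_1,\ldots,u_{k-1})\in R(a,k)$ with $r=\sum_{i=1}^{k-1}u_iA_i$ and $\mathsf m_{A^{(k-1)}}(r)=\sum_i u_i$. I would consider the extended vector $\mathbf w=(u_1,\ldots,u_{k-1},q)\in\mathbb N^k$. It represents $qA_k+r$, and this value lies in $[0,A_{k+1})$ by hypothesis. The task is to show $\mathbf w\in R(a,k+1)$.

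Condition (a) holds because $u_i\le a$ and $q\le a$. For condition (b), any coordinate equal to $a$ at position $j\le k-1$ already forces the earlier coordinates to vanish, since $\mathbf u^{(r)}\in R(a,k)$. The only case to check is $q=a$, where we need $u_1=\cdots=u_{k-1}=0$, i.e.\ $r=0$. This follows from the hypothesis: $aA_k+r<A_{k+1}=aA_k+1$ forces $r=0$. This is the one point where the hypothesis $qA_k+r<A_{k+1}$ is used, and it is the only potential obstacle; it is immediate.

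Hence $\mathbf w$ is the canonical representative of $qA_k+r$ by uniqueness in Proposition~\ref{prop:canonical}. Summing coordinates and using \eqref{minFL} in the $A^{(k)}$-setting then gives $\mathsf m_{A^{(k)}}(qA_k+r)=\sum_i u_i+q=\mathsf m_{A^{(k-1)}}(r)+q$. (The hypothesis $k\ge 2$ ensures $A^{(k-1)}$ is nonempty. Alternatively, the greedy algorithm directly yields $\lfloor (qA_k+r)/A_k\rfloor=q$ with remainder $r$, which gives the same decomposition.)
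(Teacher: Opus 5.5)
Your proof is correct and follows essentially the same route as the paper. Both arguments extend the canonical representative of $r$ by the last coordinate $q$ and check membership in $R(a,k+1)$, where the only nontrivial case $q=a$ is handled by $aA_k+r<aA_k+1\Rightarrow r=0$; they then conclude by uniqueness of canonical representatives and sum the coordinates.
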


\begin{proof}
Let $\mathbf u=(u_1,\ldots,u_{k-1})\in R(a,k)$ be the canonical representative of $r$ with respect to $A_1,\ldots,A_{k-1}$. We claim that $(u_1,\ldots,u_{k-1},q)$ belongs to $R(a,k+1)$. This is clear if $q<a$. If $q=a$, then the inequality $qA_k+r<A_{k+1}=aA_k+1$ forces $r=0$, and hence $\mathbf u=(0,\ldots,0)$, so the claim also holds.

The vector $(u_1,\ldots,u_{k-1},q)$ represents $qA_k+r$. Hence, by the uniqueness of canonical representatives, it is the canonical representative of $qA_k+r$ with respect to $A_1,\ldots,A_k$. Therefore, $\mathsf m_{A^{(k)}}(qA_k+r)=\sum_{i=1}^{k-1}u_i+q=\mathsf m_{A^{(k-1)}}(r)+q$.
\end{proof}

\begin{lemma}\label{lem:add-last-weight}
Let $k\ge 2$, and let $0\le r<A_{k+1}$ be such that $A_k+r<A_{k+1}$. Then
\[
\mathsf m_{A^{(k)}}(A_k+r)=\mathsf m_{A^{(k)}}(r)+1.
\]
\end{lemma}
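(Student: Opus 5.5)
The plan is to show that adding $\mathbf e_k$ to the canonical representative of $r$ (with respect to $A_1,\ldots,A_k$) again gives a vector in $R(a,k+1)$. By uniqueness, that vector is then the canonical representative of $A_k+r$, and the formula follows by summing coordinates. This is the same mechanism as in Proposition~\ref{prop:tail_localization}, now stated for an arbitrary level $k$.

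Write $\mathbf u=\mathbf u^{(r)}=(u_1,\ldots,u_k)\in R(a,k+1)$, which exists by Proposition~\ref{prop:canonical} applied with $k+1$ in place of $n$. The vector $\mathbf u+\mathbf e_k$ clearly represents $A_k+r$. It remains to check that it lies in $R(a,k+1)$, and there are two possible obstructions.

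First, suppose $u_k=a$. Then $r\ge aA_k$, so $A_k+r\ge (a+1)A_k\ge aA_k+1=A_{k+1}$, which contradicts the hypothesis $A_k+r<A_{k+1}$. Hence $u_k\le a-1$, and so $(\mathbf u+\mathbf e_k)_k\le a$.

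Second, suppose $u_k=a-1$ and some $u_j>0$ with $j<k$. Then $r\ge (a-1)A_k+1$, so $A_k+r\ge aA_k+1=A_{k+1}$, again a contradiction. So whenever the new last coordinate equals $a$, all earlier coordinates vanish.

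The lower coordinates are untouched, so condition (b) of Definition~\ref{def:conj_R} still holds there. This gives $\mathbf u+\mathbf e_k\in R(a,k+1)$.

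By Proposition~\ref{prop:canonical}, $\mathbf u^{(A_k+r)}=\mathbf u^{(r)}+\mathbf e_k$. Since $\mathsf m_{A^{(k)}}$ equals the coordinate sum of the canonical representative (by \eqref{minFL} and Lemma~\ref{lem:grevlex} at level $k$), we get $\mathsf m_{A^{(k)}}(A_k+r)=\mathsf m_{A^{(k)}}(r)+1$. The only step needing care is ruling out the two obstructions; both reduce to the identity $A_{k+1}=aA_k+1$.
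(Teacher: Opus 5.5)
Your proof is correct and is essentially the paper's argument: the paper writes $r=qA_k+s$ with $0\le s<A_k$, observes that $q+1\le a$, and applies Lemma~\ref{lem:splitting-lambda} to both $r$ and $A_k+r$. This amounts to saying that the canonical representative of $A_k+r$ is that of $r$ with the last coordinate raised by one. You verify this membership in $R(a,k+1)$ directly, and your two obstructions are exactly the conditions $q+1\le a$ and, when $q+1=a$, $s=0$ that the splitting lemma handles; so the only difference is that you inline that lemma rather than cite it.
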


\begin{proof}
Write $r=qA_k+s$, with $0\le s<A_k$. Since $A_k+r=(q+1)A_k+s<A_{k+1}=aA_k+1$, we have $q+1\le a$. Applying Lemma~\ref{lem:splitting-lambda} to $r=qA_k+s$ and to $A_k+r=(q+1)A_k+s$, we obtain $\mathsf m_{A^{(k)}}(r)=\mathsf m_{A^{(k-1)}}(s)+q$ and $\mathsf m_{A^{(k)}}(A_k+r)=\mathsf m_{A^{(k-1)}}(s)+q+1$. The result follows.
\end{proof}

Now, for $1\le N\le A_{k+1}$, set
\[
\mathcal M_k(N):=
\left\{
\ell\in\{0,\ldots,N-1\}\ \middle|\
\begin{array}{l}
\mathsf m_{A^{(k)}}(\ell+A_j)\le \mathsf m_{A^{(k)}}(\ell)\\
\text{for every } j\in\{1,\ldots,k\}\text{ such that }\ell+A_j<N
\end{array}
\right\}.
\]
Thus, since $A^{(n-1)}=A$, Theorem~\ref{th:maxcrit_mA} shows that $\operatorname{Maximals}_{\preceq_S}\operatorname{Ap}(S)$ is precisely the set of elements $W(\ell)$ with $\ell\in\mathcal M_{n-1}(m)$.

We prove, by induction on $k$, that $|\mathcal M_k(N)|\le k$ for all $k\ge 1$ and all $1\le N\le A_{k+1}$.

\begin{lemma}\label{lem:full-level-maximals}
For every $k\ge 2$, $\mathcal M_{k-1}(A_k)=\{A_k-1-q\mid q=0,\ldots,k-2\}$. Moreover, $\mathsf m_{A^{(k-1)}}(A_k-1-q)=a+q(a-1)$ for $q=0,\ldots,k-2$.
\end{lemma}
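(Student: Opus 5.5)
We argue by induction on $k$, proving both inclusions at each level. The formula $\mathsf m_{A^{(k-1)}}(A_k-1-q)=a+q(a-1)$ for $0\le q\le k-2$ is Lemma~\ref{lem:full-level-canonical-vectors} with $k-1$ in place of $k$ when $k\ge 3$. For $k=2$ the only value is $q=0$: here $A^{(1)}=\{1\}$, so $\mathsf m_{A^{(1)}}(x)=x$ and $\mathsf m_{A^{(1)}}(A_2-1)=a$. We use this formula throughout.

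\emph{Base case $k=2$.} Since $A^{(1)}=\{1\}$, we have $\mathsf m_{A^{(1)}}(\ell+1)=\ell+1>\ell=\mathsf m_{A^{(1)}}(\ell)$. Hence $\ell\in\mathcal M_1(A_2)$ exactly when the condition is vacuous, i.e.\ when $\ell+1\ge A_2$. This gives $\mathcal M_1(A_2)=\{A_2-1\}$.

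\emph{Inclusion $\supseteq$.} Fix $k\ge 3$ and $\ell=A_k-1-q$ with $0\le q\le k-2$. Let $j\le k-1$ satisfy $\ell+A_j<A_k$, that is, $A_j\le q$. Then $\ell+A_j=A_k-1-q'$ with $q'=q-A_j\in\{0,\dots,k-2\}$. The explicit formula gives
\[
\mathsf m_{A^{(k-1)}}(\ell+A_j)=a+q'(a-1)\le a+q(a-1)=\mathsf m_{A^{(k-1)}}(\ell).
\]
So $\ell\in\mathcal M_{k-1}(A_k)$. This step needs no induction.

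\emph{Inclusion $\subseteq$.} Take $\ell<A_k-1-(k-2)$. We must exhibit some $j\le k-1$ with $\ell+A_j<A_k$ and $\mathsf m_{A^{(k-1)}}(\ell+A_j)=\mathsf m_{A^{(k-1)}}(\ell)+1$.

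First suppose $\ell+A_{k-1}<A_k$. Then Lemma~\ref{lem:add-last-weight}, with $k-1\ge 2$ in place of $k$, gives the claim with $j=k-1$.

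Otherwise $\ell\ge A_k-A_{k-1}=(a-1)A_{k-1}+1$, so the last canonical coordinate of $\ell$ is $a-1$ or $a$. The value $a$ would force $\ell=aA_{k-1}=A_k-1$, which is excluded. Hence we can write $\ell=(a-1)A_{k-1}+r$ with $1\le r<A_{k-1}$. Moreover $r=\ell-(a-1)A_{k-1}<A_{k-1}-(k-2)=A_{k-1}-1-(k-3)$.

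By the induction hypothesis, $r\notin\mathcal M_{k-2}(A_{k-1})$. So there is $j\le k-2$ with $r+A_j<A_{k-1}$ and $\mathsf m_{A^{(k-2)}}(r+A_j)\ge \mathsf m_{A^{(k-2)}}(r)+1$; this is in fact an equality, since adding one weight raises the minimum length by at most one.

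Now apply Lemma~\ref{lem:splitting-lambda}, with $k-1$ in place of $k$ and $q=a-1$, to both $r$ and $r+A_j$; both are below $A_{k-1}$. This gives
\[
\mathsf m_{A^{(k-1)}}(\ell+A_j)=\mathsf m_{A^{(k-2)}}(r+A_j)+a-1=\mathsf m_{A^{(k-1)}}(\ell)+1,
\]
and clearly $\ell+A_j<(a-1)A_{k-1}+A_{k-1}<A_k$. Hence $\ell\notin\mathcal M_{k-1}(A_k)$.

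The main obstacle is this second case. Adding a small weight could push $r$ up to $A_{k-1}$ and change the last coordinate, which would break the splitting formula. The induction hypothesis supplies a $j$ with $r+A_j<A_{k-1}$, so this does not happen.
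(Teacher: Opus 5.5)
Your proof is correct. Your inclusion $\subseteq$ is the paper's argument read contrapositively. The paper assumes $s\in\mathcal M_{k-1}(A_k)$, rules out $s+A_{k-1}<A_k$ by Lemma~\ref{lem:add-last-weight}, writes $s=(a-1)A_{k-1}+r$, and uses Lemma~\ref{lem:splitting-lambda} to push the defining condition down to $r\in\mathcal M_{k-2}(A_{k-1})$. You instead push a violating index $j$ for $r$ up to a violating index for $\ell$.

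The genuine difference is in $\supseteq$. The paper proves it by induction as well:
\begin{itemize}
\item it writes $s_q=(a-1)A_{k-1}+r_q$ and gets $r_q\in\mathcal M_{k-2}(A_{k-1})$ from the induction hypothesis;
\item it lifts the condition through Lemma~\ref{lem:splitting-lambda};
\item it treats separately the boundary case $r_q+A_j=A_{k-1}$, where $s_q+A_j=A_k-1$ and one compares $a$ with $a+q(a-1)$.
\end{itemize}
You observe instead that an admissible step $A_k-1-q\mapsto A_k-1-q+A_j$, possible exactly when $A_j\le q$, stays inside the family, landing at $q'=q-A_j$. The explicit formula of Lemma~\ref{lem:full-level-canonical-vectors} then settles it at once, even with strict inequality.

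Your route is shorter and needs neither induction nor a boundary case. It also makes transparent why these elements lie in $\mathcal M_{k-1}(A_k)$: the family is closed under admissible steps, and the minimum length strictly decreases along them. The paper's route is uniform with the $\subseteq$ half, and it is the template reused in the proof of Theorem~\ref{th:arith_family_PF}. Your shortcut would work equally well there, since $\ell_q+A_j=m-1-(q-A_j)$.

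You also check the length formula for $k=2$ directly. The paper's citation of Lemma~\ref{lem:full-level-canonical-vectors} with $k-1$ in place of $k$ formally covers only $k\ge 3$, so this fills a trivial gap.
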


\begin{proof}
The length formula follows from Lemma~\ref{lem:full-level-canonical-vectors}, applied with $k-1$ in place of $k$.

We prove the description of $\mathcal M_{k-1}(A_k)$ by induction on $k$. If $k=2$, then $A_1=1$ and $A_2=a+1$, so $\mathcal M_1(A_2)=\{A_2-1\}$.

Assume $k\ge 3$ and suppose the result known for $k-1$. Let $s\in\mathcal M_{k-1}(A_k)$. If $s+A_{k-1}<A_k$, then Lemma~\ref{lem:add-last-weight}, applied with $k-1$ in place of $k$, gives $\mathsf m_{A^{(k-1)}}(s+A_{k-1})=\mathsf m_{A^{(k-1)}}(s)+1$, contradicting the definition of $\mathcal M_{k-1}(A_k)$. Hence $s+A_{k-1}\ge A_k$, and so $s\ge A_k-A_{k-1}=(a-1)A_{k-1}+1$. Since $s\le A_k-1=aA_{k-1}$, either $s=A_k-1$, or $s=(a-1)A_{k-1}+r$ for some $1\le r\le A_{k-1}-1$.

In the second case, we claim that $r\in\mathcal M_{k-2}(A_{k-1})$. Indeed, let $j\in\{1,\ldots,k-2\}$ be such that $r+A_j<A_{k-1}$. Then $s+A_j<A_k$, and the defining condition for $s\in\mathcal M_{k-1}(A_k)$ gives $\mathsf m_{A^{(k-1)}}(s+A_j)\le \mathsf m_{A^{(k-1)}}(s)$. By Lemma~\ref{lem:splitting-lambda}, applied to $s=(a-1)A_{k-1}+r$ and to $s+A_j=(a-1)A_{k-1}+r+A_j$, this is equivalent to $\mathsf m_{A^{(k-2)}}(r+A_j)\le \mathsf m_{A^{(k-2)}}(r)$. Thus $r\in\mathcal M_{k-2}(A_{k-1})$. By the induction hypothesis, $r=A_{k-1}-1-p$ for some $p=0,\ldots,k-3$, and hence $s=aA_{k-1}-(p+1)=A_k-1-(p+1)$.

Conversely, let $s_q=A_k-1-q$ with $q=0,\ldots,k-2$. If $q=0$, then no $j$ satisfies $s_q+A_j<A_k$, so $s_q\in\mathcal M_{k-1}(A_k)$. Assume $q\ge 1$, and write $s_q=(a-1)A_{k-1}+r_q$, where $r_q=A_{k-1}-q$. Since $r_q=A_{k-1}-1-(q-1)$ and $q-1\in\{0,\ldots,k-3\}$, the induction hypothesis gives $r_q\in\mathcal M_{k-2}(A_{k-1})$.

Let $j\in\{1,\ldots,k-1\}$ be such that $s_q+A_j<A_k$. Then $j\ne k-1$, because $s_q+A_{k-1}=A_k-1-q+A_{k-1}\ge A_k$, as $q\le k-2<A_{k-1}$. Hence $j\le k-2$. Moreover, from $s_q+A_j=(a-1)A_{k-1}+r_q+A_j<A_k=aA_{k-1}+1$, we get $r_q+A_j\le A_{k-1}$. If $r_q+A_j<A_{k-1}$, the maximality of $r_q$ gives $\mathsf m_{A^{(k-2)}}(r_q+A_j)\le \mathsf m_{A^{(k-2)}}(r_q)$, and Lemma~\ref{lem:splitting-lambda} gives $\mathsf m_{A^{(k-1)}}(s_q+A_j)\le \mathsf m_{A^{(k-1)}}(s_q)$. If $r_q+A_j=A_{k-1}$, then $s_q+A_j=A_k-1$, and $\mathsf m_{A^{(k-1)}}(A_k-1)=a\le a+q(a-1)=\mathsf m_{A^{(k-1)}}(s_q)$. Thus $s_q\in\mathcal M_{k-1}(A_k)$.
\end{proof}

\begin{lemma}\label{lem:interband-elimination}
Let $k\ge 3$, and let $A_k<N\le A_{k+1}$. Write $N-1=cA_k+r$, with $1\le c\le a$ and $0\le r<A_k$. For $0\le q\le k-3$, set $\ell_q:=(c-1)A_k+A_k-1-q$. If $r\ge A_{q+2}-q-1$, then $\ell_q\notin\mathcal M_k(N)$.
\end{lemma}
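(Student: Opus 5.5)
The plan is to exhibit one index $j$ that violates the defining condition of $\mathcal M_k(N)$ at $\ell_q$. That means finding $j\in\{1,\ldots,k\}$ with $\ell_q+A_j<N$ and $\mathsf m_{A^{(k)}}(\ell_q+A_j)>\mathsf m_{A^{(k)}}(\ell_q)$. The hypothesis $r\ge A_{q+2}-q-1$ points to the choice $j=q+2$. Since $0\le q\le k-3$, we have $2\le q+2\le k-1$, so this $j$ is admissible.

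\emph{Step 1: compute $\mathsf m_{A^{(k)}}(\ell_q)$.} Write $\ell_q=(c-1)A_k+(A_k-1-q)$. Here $0\le c-1\le a$ and $0\le A_k-1-q<A_k$, and $\ell_q<cA_k\le N-1<A_{k+1}$. So Lemma~\ref{lem:splitting-lambda} applies and gives
\[
\mathsf m_{A^{(k)}}(\ell_q)=(c-1)+\mathsf m_{A^{(k-1)}}(A_k-1-q).
\]
Since $k\ge 3$ and $0\le q\le k-3\le k-2$, Lemma~\ref{lem:full-level-maximals} (equivalently Lemma~\ref{lem:full-level-canonical-vectors} with $k-1$ in place of $k$) gives $\mathsf m_{A^{(k-1)}}(A_k-1-q)=a+q(a-1)$. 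Hence $\mathsf m_{A^{(k)}}(\ell_q)=c-1+a+q(a-1)$.

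\emph{Step 2: check admissibility and compute $\mathsf m_{A^{(k)}}(\ell_q+A_{q+2})$.} We have $\ell_q+A_{q+2}=cA_k+(A_{q+2}-1-q)$. This is at most $cA_k+r=N-1$ exactly when $r\ge A_{q+2}-q-1$, which is the hypothesis, so $\ell_q+A_{q+2}<N$. Moreover $0\le A_{q+2}-1-q<A_{q+2}\le A_{k-1}<A_k$ and $c\le a$. Since $\ell_q+A_{q+2}\le N-1<A_{k+1}$, Lemma~\ref{lem:splitting-lambda} gives
\[
\mathsf m_{A^{(k)}}(\ell_q+A_{q+2})=c+\mathsf m_{A^{(k-1)}}(A_{q+2}-1-q).
\]

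\emph{Step 3: compute the remaining length.} The step needing care is evaluating $\mathsf m_{A^{(k-1)}}(A_{q+2}-1-q)$ with respect to the larger weight set $A^{(k-1)}$.
\begin{itemize}
\item The value $A_{q+2}-1-q$ is less than $A_{q+2}$. By the greedy construction of Proposition~\ref{prop:canonical}, its canonical representative with respect to $A_1,\ldots,A_{k-1}$ is its canonical representative with respect to $A_1,\ldots,A_{q+1}$, padded with zeros. So the lengths agree.
\item If $q\ge 1$, apply Lemma~\ref{lem:full-level-canonical-vectors} with $q+1\ge 2$ in place of $k$ (valid since $q\le (q+1)-1$). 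This gives length $a+q(a-1)$.
\item If $q=0$, then $A_2-1=a=aA_1$, whose length is $a$, which agrees with the same formula.
\end{itemize}
Therefore $\mathsf m_{A^{(k)}}(\ell_q+A_{q+2})=c+a+q(a-1)=\mathsf m_{A^{(k)}}(\ell_q)+1>\mathsf m_{A^{(k)}}(\ell_q)$.

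Since $j=q+2$ is an admissible index with $\ell_q+A_j<N$, the defining inequality of $\mathcal M_k(N)$ fails at $\ell_q$, and so $\ell_q\notin\mathcal M_k(N)$.
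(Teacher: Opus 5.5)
Your proof is correct and follows essentially the same route as the paper. You take $j=q+2$, check admissibility from $r\ge A_{q+2}-q-1$, reduce both lengths to level $A^{(k-1)}$ via Lemma~\ref{lem:splitting-lambda}, and evaluate them with the full-level formula to get a strict increase by $1$. Your separate treatment of $q=0$, where Lemma~\ref{lem:full-level-canonical-vectors} with $q+1=1$ in place of $k$ would not literally apply, is extra care that the paper's citation of Lemma~\ref{lem:full-level-maximals} with $q+2$ in place of $k$ glosses over.
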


\begin{proof}
Put $j:=q+2$. Since $q\le k-3$, we have $j\le k-1$. Moreover, $\ell_q+A_j=cA_k+A_{q+2}-q-1$. By assumption, $A_{q+2}-q-1\le r$, and so $\ell_q+A_j\le cA_k+r=N-1<N$. Thus $j$ is admissible in the definition of $\mathcal M_k(N)$.

By Lemmas~\ref{lem:splitting-lambda} and~\ref{lem:full-level-maximals}, $\mathsf m_{A^{(k)}}(\ell_q)=(c-1)+\mathsf m_{A^{(k-1)}}(A_k-1-q)=(c-1)+a+q(a-1)$. On the other hand, Lemma~\ref{lem:splitting-lambda} gives $\mathsf m_{A^{(k)}}(\ell_q+A_j)=c+\mathsf m_{A^{(k-1)}}(A_{q+2}-q-1)$.

Since $A_{q+2}-q-1<A_{q+2}$, no generator $A_i$ with $i\ge q+2$ can occur in a factorization of $A_{q+2}-q-1$. Hence $\mathsf m_{A^{(k-1)}}(A_{q+2}-q-1)=\mathsf m_{A^{(q+1)}}(A_{q+2}-q-1)$. By Lemma~\ref{lem:full-level-maximals}, applied with $q+2$ in place of $k$, we have $\mathsf m_{A^{(q+1)}}(A_{q+2}-q-1)=a+q(a-1)$. Thus $\mathsf m_{A^{(k)}}(\ell_q+A_j)=c+a+q(a-1)>\mathsf m_{A^{(k)}}(\ell_q)$.

Therefore the defining condition for $\ell_q\in\mathcal M_k(N)$ fails for the admissible index $j=q+2$.
\end{proof}

\begin{proposition}\label{prop:combinatorial-type-bound}
For every $k\ge 1$ and every $N$ with $1\le N\le A_{k+1}$, one has $|\mathcal M_k(N)|\le k$. Moreover, if $|\mathcal M_k(N)|=p\ge 2$, then $N-1\ge A_p-p+1$.
\end{proposition}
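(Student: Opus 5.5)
We argue by induction on $k$, proving both assertions together. For $k=1$ we have $A^{(1)}=\{1\}$ and $\mathsf m_{A^{(1)}}(x)=x$ for $0\le x<A_2$. Hence $\ell\in\mathcal M_1(N)$ forces $\ell+1\ge N$, that is $\ell=N-1$. So $|\mathcal M_1(N)|\le 1$, and the second assertion is vacuous.

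Now let $k\ge 2$ and $1\le N\le A_{k+1}$. If $N\le A_k$, no element $\ell<N$ has $\ell+A_k<N$, so the index $j=k$ is never admissible. Moreover $\mathsf m_{A^{(k)}}=\mathsf m_{A^{(k-1)}}$ on $[0,A_k)$. Hence $\mathcal M_k(N)=\mathcal M_{k-1}(N)$, and induction gives both claims.

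Assume then $A_k<N\le A_{k+1}$, and write $N-1=cA_k+r$ with $1\le c\le a$ and $0\le r<A_k$.

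\emph{Splitting into two bands.} As in Proposition~\ref{prop:tail_localization} and Corollary~\ref{cor:two_bands}, using Lemma~\ref{lem:add-last-weight} with the index $j=k$, every $\ell\in\mathcal M_k(N)$ satisfies $\ell+A_k\ge N$. So either
\begin{itemize}
\item[(I)] $\ell=cA_k+r'$ with $0\le r'\le r$, or
\item[(II)] $\ell=(c-1)A_k+r'$ with $r<r'<A_k$.
\end{itemize}

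\emph{Band (I).} For admissible $j\le k-1$, Lemma~\ref{lem:splitting-lambda} turns the defining condition into $r'\in\mathcal M_{k-1}(r+1)$; note $r+1\le A_k$. The case $c=a$ forces $r=0$ and is handled the same way. Hence band (I) contributes at most $p_1:=|\mathcal M_{k-1}(r+1)|$ elements, and by induction $r\ge A_{p_1}-p_1$ when $p_1\ge 2$.

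\emph{Band (II).} For admissible $j\le k-1$ with $r'+A_j<A_k$, Lemma~\ref{lem:splitting-lambda} shows the condition is the defining condition of $\mathcal M_{k-1}(A_k)$. If instead $r'+A_j\ge A_k$, then $\ell+A_j\ge cA_k+1$ and admissibility may or may not hold. Lemma~\ref{lem:splitting-lambda} only covers arguments below $A_{k+1}$ of the form $qA_k+s$, and here the argument has this form with $q=c$, so the condition is still checkable.

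I expect the key claim to be that band (II) elements lie in $\{(c-1)A_k+A_k-1-q : 0\le q\le k-2\}$. This should follow because a failure for some $j$ with $r'+A_j<A_k$ is already a failure in $\mathcal M_{k-1}(A_k)$, combined with Lemma~\ref{lem:full-level-maximals}. Lemma~\ref{lem:interband-elimination} then removes $\ell_q$ for every $q\le k-3$ with $r\ge A_{q+2}-q-1$.

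\emph{Counting.} Let $t$ be the largest $q\ge 0$ with $r\ge A_{q+2}-q-1$, with no such $q$ meaning no elimination. Band (II) keeps at most the $\ell_q$ with $q>t$ and $q\le k-2$, and also requires $r'=A_k-1-q>r$. That is at most $k-2-t$ elements, or $k-1$ when no $q$ qualifies.

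Band (I) has $p_1$ elements with $r\ge A_{p_1}-p_1$. This gives $A_{(p_1-2)+2}-(p_1-2)-1\le r$, so $t\ge p_1-2$. The total is therefore at most $p_1+(k-2-(p_1-2))=k$. In the degenerate cases $p_1\le 1$, the total is at most $1+(k-1)=k$.

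\emph{Second assertion.} It follows from the same bookkeeping. Say $|\mathcal M_k(N)|=p\ge 2$. Band (II) elements have $r'\le A_k-1$ and satisfy $N-1=cA_k+r\ge A_k$. When $p\le k$ we have $A_p-p+1\le A_k$, which handles the case where band (II) is present. When only band (I) is present, $N-1\ge r\ge A_p-p+1$ comes from the inductive bound on $r$ combined with $c\ge 1$.

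\emph{Main obstacle.} The step I expect to be hardest is band (II): precisely, checking admissible indices $j$ with $r'+A_j\ge A_k$. These cross into band (I), where minimum lengths must be compared via Lemma~\ref{lem:splitting-lambda} with $q=c$. I also need the inequality $t\ge p_1-2$, which ties the band (I) count to the eliminations of Lemma~\ref{lem:interband-elimination}; this is where the strengthened second assertion is essential to the induction.
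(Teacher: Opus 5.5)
Your proposal is correct and follows essentially the paper's own proof. It uses the same simultaneous induction and the same two-band split from Lemma~\ref{lem:add-last-weight}, reduces the bands to $\mathcal M_{k-1}(r+1)$ and $\mathcal M_{k-1}(A_k)$ via Lemmas~\ref{lem:splitting-lambda} and~\ref{lem:full-level-maximals}, and eliminates lower-band candidates with Lemma~\ref{lem:interband-elimination}; your index $t\ge p_1-2$ plays the role of the paper's cutoff $q\le p-2$.

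Two small corrections. First, the inductive hypothesis gives $r\ge A_{p_1}-p_1+1$, not merely $r\ge A_{p_1}-p_1$, and this stronger form is exactly what $t\ge p_1-2$ requires. Second, the cross-band indices $j$ with $r'+A_j\ge A_k$ that you flag as the main obstacle never need to be examined, because the upper bound only uses necessary conditions for membership in $\mathcal M_k(N)$.
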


\begin{proof}
We prove both assertions simultaneously by induction on $k$. If $k=1$, then $A_1=1$ and $\mathsf m_{A^{(1)}}(x)=x$, so the inequality $\mathsf m_{A^{(1)}}(\ell+1)\le\mathsf m_{A^{(1)}}(\ell)$ is impossible whenever $\ell+1<N$. Hence $\mathcal M_1(N)=\{N-1\}$, and the result follows.

Assume $k\ge 2$ and suppose the result known for $k-1$. If $N\le A_k$, then the last coordinate in the canonical representative of every $x<N$ with respect to $A_1,\ldots,A_k$ is zero, and the condition involving $A_k$ is never admissible. Hence $\mathcal M_k(N)=\mathcal M_{k-1}(N)$, and both assertions follow from the induction hypothesis.

Assume now that $A_k<N\le A_{k+1}$, and write $N-1=cA_k+r$, with $1\le c\le a$ and $0\le r<A_k$. If $\ell\in\mathcal M_k(N)$, then $\ell+A_k\ge N$ by Lemma~\ref{lem:add-last-weight}. Thus every element of $\mathcal M_k(N)$ lies either in the upper band $\ell=cA_k+s$, $0\le s\le r$, or in the lower band $\ell=(c-1)A_k+s$, $r+1\le s\le A_k-1$.

For the upper band, Lemma~\ref{lem:splitting-lambda} shows that $cA_k+s\in\mathcal M_k(N)$ if and only if $s\in\mathcal M_{k-1}(r+1)$. Set $p:=|\mathcal M_{k-1}(r+1)|$. Since $r\in\mathcal M_{k-1}(r+1)$, we have $p\ge1$, and the upper band contributes exactly $p$ elements.

Let $\ell=(c-1)A_k+s$ be a lower-band element of $\mathcal M_k(N)$. If $j<k$ and $s+A_j<A_k$, then $\ell+A_j<N$, and Lemma~\ref{lem:splitting-lambda} gives $\mathsf m_{A^{(k-1)}}(s+A_j)\le \mathsf m_{A^{(k-1)}}(s)$. Hence $s\in\mathcal M_{k-1}(A_k)$, so Lemma~\ref{lem:full-level-maximals} gives $s=A_k-1-q$ for some $q=0,\ldots,k-2$.

If $p=1$, then the upper band contributes exactly one element. On the other hand, every lower-band element must satisfy
$
s\in \mathcal M_{k-1}(A_k)=\{A_k-1-q\mid q=0,\ldots,k-2\}
$
by Lemma~\ref{lem:full-level-maximals}, so the lower band has at most $k-1$ candidates. Therefore
$
|\mathcal M_k(N)|\le 1+(k-1)=k.
$
Assume $p\ge2$. By the induction hypothesis applied to $\mathcal M_{k-1}(r+1)$, we have $r\ge A_p-p+1$. Since $(A_{h+1}-h)-(A_h-h+1)=a^h-1>0$, the sequence $A_h-h+1$ is strictly increasing. It follows that $r\ge A_{q+2}-q-1$ for every $q\le p-2$. By Lemma~\ref{lem:interband-elimination}, these lower-band candidates are eliminated. Thus at most the candidates with $q=p-1,\ldots,k-2$ can survive, giving at most $k-p$ lower-band elements. Therefore $|\mathcal M_k(N)|\le p+(k-p)=k$.

It remains to prove the second assertion in the case $A_k<N\le A_{k+1}$. Let $|\mathcal M_k(N)|=u\ge2$. We already know that $u\le k$. Since $N>A_k$, we have $N-1\ge A_k$. As the sequence $A_h-h+1$ is increasing and $u\le k$, we get $A_u-u+1\le A_k-k+1\le A_k\le N-1$. This completes the induction.
\end{proof}

\begin{proof}[Proof of Theorem~\ref{thm:type-bound}]
By Proposition~\ref{prop:PF1}, $\operatorname{t}(S)=|\operatorname{Maximals}_{\preceq_S}\operatorname{Ap}(S)|$. Since $A^{(n-1)}=A$, Theorem~\ref{th:maxcrit_mA} gives $\operatorname{Maximals}_{\preceq_S}\operatorname{Ap}(S)=\{\,W(\ell)\mid \ell\in\mathcal M_{n-1}(m)\,\}$. Hence $\operatorname{t}(S)=|\mathcal M_{n-1}(m)|$. Since $m\le A_n$, Proposition~\ref{prop:combinatorial-type-bound}, applied with $k=n-1$ and $N=m$, gives $|\mathcal M_{n-1}(m)|\le n-1$. Therefore $\operatorname{t}(S)\le n-1$.
\end{proof}

\section{A family with arithmetic pseudo-Frobenius numbers}\label{sec:arith_pf_family}

Throughout this section we keep the standing hypotheses and notation of
Sections~\ref{sec:apery} and~\ref{sec:PF}. In particular,
$S=S_{a,b}(m)$, $d=(a-1)m+b$, and $n$ is the smallest positive integer
such that $A_n\ge m$.

We study the subfamily determined by
\[
m=cA_{n-1}+1
\]
for some $c\in\{1,\ldots,a\}$. Equivalently,
$m-1=cA_{n-1}$, and the canonical representative of $m-1$ in $R(a,n)$ is
$(0,\ldots,0,c)$.

This family belongs to the Collection CNS introduced in \cite{XIN}. However, the results on pseudo-Frobenius numbers and type obtained there apply only to more restrictive subfamilies and do not cover directly the specialization $m=cA_{n-1}+1$, $1\le c\le a$, considered here. We therefore give a direct proof that, for this subfamily, $\operatorname{PF}(S)$ is an arithmetic
progression of length $n-1$.

\begin{theorem}\label{th:arith_family_PF}
Assume that $m=cA_{n-1}+1$ for some $c\in\{1,\dots,a\}$, and set
$\alpha:=W(m-1)-m=d(m-1)+(c-1)m$. Then
\[
\operatorname{Maximals}_{\preceq_S}\operatorname{Ap}(S)
=
\{\,W(m-1-q)\mid q=0,\ldots,n-2\,\},
\]
and
\[
\operatorname{PF}(S)
=
\{\alpha,\alpha-b,\alpha-2b,\ldots,\alpha-(n-2)b\}.
\]
In particular, $\operatorname{PF}(S)$ is an arithmetic progression of
length $n-1$ with common difference $-b$, and $\operatorname{t}(S)=n-1$.
\end{theorem}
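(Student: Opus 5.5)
We use the identification $\operatorname{Maximals}_{\preceq_S}\operatorname{Ap}(S)=\{W(\ell)\mid \ell\in\mathcal M_{n-1}(m)\}$ from the proof of Theorem~\ref{thm:type-bound}. The plan is to compute $\mathcal M_{n-1}(m)$ exactly, using the band analysis of Proposition~\ref{prop:combinatorial-type-bound} with $k=n-1$ and $N=m$. The case $n=2$ is trivial: there $\mathcal M_1(m)=\{m-1\}$, so we assume $n\ge 3$. Write $N-1=m-1=cA_{n-1}+r$; by hypothesis $r=0$.

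\emph{Upper band.} Its elements are $cA_{n-1}+s$ with $s\in\mathcal M_{n-2}(r+1)=\mathcal M_{n-2}(1)=\{0\}$. So it contributes exactly $\ell=m-1$, which is maximal by Corollary~\ref{cor:mminus1_maximal}.

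\emph{Lower band.} Its candidates are $\ell_q=(c-1)A_{n-1}+A_{n-1}-1-q=cA_{n-1}-1-q=m-2-q$ for $q=0,\dots,n-3$, by Lemma~\ref{lem:full-level-maximals}. Since $p=1$, Lemma~\ref{lem:interband-elimination} would require $r\ge A_{q+2}-q-1\ge 1$, which fails, so none of them is eliminated. We must still check that each $\ell_q$ really lies in $\mathcal M_{n-1}(m)$. Take $j$ with $\ell_q+A_j<m$.

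For $j=n-1$ we have $\ell_q+A_{n-1}\ge m$ because $q+1\le n-2<A_{n-1}$, so this $j$ is not admissible.

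For $j\le n-2$, first suppose $s+A_j<A_{n-1}$, where $s=A_{n-1}-1-q$. Then Lemma~\ref{lem:splitting-lambda} together with $s\in\mathcal M_{n-2}(A_{n-1})$ gives the required inequality. Otherwise $s+A_j\ge A_{n-1}$, and then $\ell_q+A_j=cA_{n-1}+t$ with $0\le t$ and $cA_{n-1}+t<m$, which forces $t=0$. In that case $\mathsf m_A(\ell_q+A_j)=c$, while
\[
\mathsf m_A(\ell_q)=(c-1)+a+q(a-1)\ge c,
\]
and the inequality holds. Hence $\mathcal M_{n-1}(m)=\{m-1-q\mid 0\le q\le n-2\}$, with the index shift $q\mapsto q+1$ absorbing the upper element $m-1$. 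This also gives $\operatorname{t}(S)=n-1$.

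\emph{Pseudo-Frobenius numbers.} By Proposition~\ref{prop:PF1}, $\operatorname{PF}(S)=\{W(m-1-q)-m\}$. We compute $\mathsf m_A(m-1)=c$, and for $q\ge1$
\[
\mathsf m_A(m-1-q)=c-1+a+(q-1)(a-1)=c+q(a-1),
\]
which also holds at $q=0$. Therefore
\[
W(m-1-q)-m=d(m-1)-dq+(c+q(a-1))m-m=\alpha-q\bigl(d-(a-1)m\bigr)=\alpha-qb,
\]
as claimed.

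The main obstacle is verifying that the lower-band candidates genuinely belong to $\mathcal M_{n-1}(m)$. The earlier lemmas only supply upper bounds and eliminations, so the boundary case $s+A_j=A_{n-1}$ has to be handled by hand, as in the second case above.
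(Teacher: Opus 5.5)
Your proof is correct and is essentially the paper's argument. The paper gets the two bands from Corollaries~\ref{cor:two_layers} and~\ref{cor:two_bands} instead of from the band analysis inside the proof of Proposition~\ref{prop:combinatorial-type-bound}, but it then proceeds exactly as you do: it reduces the lower band to $\mathcal M_{n-2}(A_{n-1})$ via Lemmas~\ref{lem:splitting-lambda} and~\ref{lem:full-level-maximals}, and verifies the converse by the same two cases, settling the boundary case $\ell+A_j=m-1$ with $\mathsf m_A(m-1)=c$; your aside about Lemma~\ref{lem:interband-elimination} is harmless but unnecessary.
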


\begin{proof}
If $n=2$, then $A=\{A_1\}=\{1\}$ and
$W(i)=i(d+m)$ for $0\le i\le m-1$. Hence
$W(i)\preceq_S W(j)$ whenever $0\le i\le j\le m-1$, so
$W(m-1)$ is the unique maximal element of $\operatorname{Ap}(S)$. The conclusion follows from Proposition~\ref{prop:PF1}. Hence assume $n\ge 3$.

We first determine the maximal elements of the Ap\'ery poset. Let
$W(\ell)$ be maximal. Since $m-1=cA_{n-1}$, we have $r(m-1)=0$. By
Corollaries~\ref{cor:two_layers} and~\ref{cor:two_bands}, either
$\ell=m-1$, or $\ell=(c-1)A_{n-1}+r$ with $1\le r\le A_{n-1}-1$.

In the latter case, since $
\ell+A_{n-1}=cA_{n-1}+r=(m-1)+r\ge m$,
the index $j=n-1$ is not admissible in Theorem~\ref{th:maxcrit_mA}.
Thus, for every $j\in\{1,\ldots,n-2\}$ such that $r+A_j<A_{n-1}$, we have
$
\ell+A_j=(c-1)A_{n-1}+(r+A_j)<cA_{n-1}+1=m$,
and hence Theorem~\ref{th:maxcrit_mA} gives
$
\mathsf m_A(\ell+A_j)\le \mathsf m_A(\ell).
$
Applying Lemma~\ref{lem:splitting-lambda} to both sides, we obtain $
\mathsf m_{A^{(n-2)}}(r+A_j)\le \mathsf m_{A^{(n-2)}}(r)$.
Therefore $r\in \mathcal M_{n-2}(A_{n-1})$. Hence, by
Lemma~\ref{lem:full-level-maximals}, $r=A_{n-1}-1-p$ for some
$p=0,\ldots,n-3$. Thus $\ell=cA_{n-1}-1-p=m-1-(p+1)$.

Conversely, $W(m-1)$ is maximal by
Corollary~\ref{cor:mminus1_maximal}. Let $q\in\{1,\ldots,n-2\}$, and set
$\ell_q:=m-1-q=(c-1)A_{n-1}+r_q$, where $r_q=A_{n-1}-q$. By
Lemma~\ref{lem:full-level-maximals}, $r_q\in\mathcal M_{n-2}(A_{n-1})$.

We verify the criterion in Theorem~\ref{th:maxcrit_mA}. Let
$j\in\{1,\ldots,n-1\}$ and assume $\ell_q+A_j<m$. Then $j\ne n-1$,
because $\ell_q+A_{n-1}=m-1-q+A_{n-1}\ge m$, as
$q\le n-2<A_{n-1}$. Hence $j\le n-2$.

If $r_q+A_j<A_{n-1}$, then
$\mathsf m_{A^{(n-2)}}(r_q+A_j)\le\mathsf m_{A^{(n-2)}}(r_q)$, and
Lemma~\ref{lem:splitting-lambda} gives
$\mathsf m_A(\ell_q+A_j)\le\mathsf m_A(\ell_q)$. If
$r_q+A_j=A_{n-1}$, then $\ell_q+A_j=m-1$, and
$\mathsf m_A(m-1)=c\le c+q(a-1)=\mathsf m_A(\ell_q)$, where the last
equality follows from Lemmas~\ref{lem:splitting-lambda} and
\ref{lem:full-level-maximals}. Thus Theorem~\ref{th:maxcrit_mA} shows
that $W(\ell_q)=W(m-1-q)$ is maximal.

Therefore $\operatorname{Maximals}_{\preceq_S}\operatorname{Ap}(S)
=
\{\,W(m-1-q)\mid q=0,\ldots,n-2\,\}$.
By Proposition~\ref{prop:PF1},
$\operatorname{PF}(S)=\{\,W(m-1-q)-m\mid q=0,\ldots,n-2\,\}$.

It remains to identify this set explicitly. By
Lemmas~\ref{lem:splitting-lambda} and~\ref{lem:full-level-maximals},
$\mathsf m_A(m-1-q)=c+q(a-1)$ for $q=0,\ldots,n-2$. Hence
$W(m-2-q)-W(m-1-q)=-d+(a-1)m=-b$ for $q=0,\ldots,n-3$, because
$d=(a-1)m+b$.

Finally, $\mathsf m_A(m-1)=c$, so
$W(m-1)-m=d(m-1)+(c-1)m=\alpha$. Thus
$\operatorname{PF}(S)=
\{\alpha,\alpha-b,\alpha-2b,\ldots,\alpha-(n-2)b\}$, and the result
follows.
\end{proof}

Thus Theorem~\ref{th:arith_family_PF} shows that the upper bound $\operatorname{t}(S)\le n-1$ is sharp.

\begin{remark}
Assume that $n\ge 3$, and let $I_S$ be the toric ideal of $S$, with
$x_i$ corresponding to $s_{i-1}$ for $i=1,\ldots,n$. With the notation
introduced above, one has
$a s_{i-1}+s_j=a s_{j-1}+s_i$ for all $1\le i<j\le n-1$, because
$A_i=aA_{i-1}+1$ for every $i\ge 1$, with $A_0=0$. Moreover, for every
$1\le i\le n-1$, we have
$a s_{i-1}+(d+c+1-a)s_0=s_i+c s_{n-1}$.
Indeed,
\[
a s_{i-1}+(d+c+1-a)s_0
=
a(m+A_{i-1}d)+(d+c+1-a)m
=
(d+c+1)m+aA_{i-1}d,
\]
while
\[
s_i+c s_{n-1}
=
(m+A_i d)+c(m+A_{n-1}d)
=
(c+1)m+(A_i+cA_{n-1})d,
\]
and these two expressions coincide because $A_i=aA_{i-1}+1$ and
$m=cA_{n-1}+1$.

Therefore $I_S$ contains the $2\times2$ minors of the matrix
\[
X_c=
\begin{pmatrix}
x_1^a & x_2^a & \cdots & x_{n-1}^a & x_n^c\\
x_2   & x_3   & \cdots & x_n       & x_1^{\,d+1+c-a}
\end{pmatrix},
\]
that is, $I_2(X_c)\subseteq I_S$.

When $c=a$, equivalently $m=A_n$, the matrix above becomes
\[
X_a=
\begin{pmatrix}
x_1^a & x_2^a & \cdots & x_{n-1}^a & x_n^a\\
x_2   & x_3   & \cdots & x_n       & x_1^{\,d+1}
\end{pmatrix},
\]
which is exactly the determinantal presentation of the generalized
repunit case; see \cite[Theorem~1 and Corollary~2]{BrancoColacoOjeda21}.
Moreover, its minimal graded free resolution is given by the
Eagon--Northcott complex; see \cite{ColacoOjeda25}.

On the other hand, the determinantal criteria
\cite{GotoKienMatsuokaTruong18,KienMatsuoka20} involve hypotheses under
which $\operatorname{PF}(S)$ has the form
$\{\delta,2\delta,\ldots,(n-1)\delta\}$ for some
$\delta\in\mathbb Z\setminus S$; see
\cite[Theorem~1.2]{GotoKienMatsuokaTruong18} and
\cite[Theorem~2]{KienMatsuoka20}. Since
Theorem~\ref{th:arith_family_PF} gives instead
$\operatorname{PF}(S)=\{\alpha,\alpha-b,\ldots,\alpha-(n-2)b\}$, those
criteria do not apply directly to the whole family $m=cA_{n-1}+1$.

Nevertheless, the matrix $X_c$ provides a natural determinantal
candidate. This suggests the question of whether $I_S=I_2(X_c)$ for the
whole family. A plausible approach would be to adapt the Gr\"obner basis
and saturation arguments used in \cite{BrancoColacoOjeda21} for the
generalized repunit case.

This question, together with a detailed Gr\"obner basis analysis of $I_2(X_c)$, will be addressed in a subsequent work.
\end{remark}

\paragraph{\bf Acknowledgements.}
Microsoft Copilot was used to assist with English language editing and stylistic revision. The authors take full responsibility for the final content of the manuscript.



\begin{thebibliography}{99}

\bibitem{BDF97}
{Barucci, V., Dobbs, D. E. and Fontana, M.}:
\emph{Maximality properties in numerical semigroups and applications to one-dimensional analytically irreducible local domains},
Mem. Amer. Math. Soc. \textbf{125} (1997), no. 598.
\url{https://pubs.ams.org/ebooks/memo/0598}

\bibitem{BrancoColacoOjeda21}
{Branco, M. B., Cola\c{c}o, I. and Ojeda, I.}:
\emph{Minimal Systems of Binomial Generators for the Ideals of Certain Monomial Curves},
Mathematics \textbf{9} (24) (2021), 3204.
\url{https://doi.org/10.3390/math9243204}

\bibitem{BrancoColacoOjeda23}
{Branco, M. B., Cola\c{c}o, I. and Ojeda, I.}:
\emph{The Frobenius problem for generalized repunit numerical semigroups},
Mediterr. J. Math. \textbf{20} (2023), Art.~16.
\url{https://doi.org/10.1007/s00009-022-02233-w}

\bibitem{ColacoOjeda25}
{Cola\c{c}o, I. and Ojeda, I.}:
\emph{Minimal free resolution of generalized repunit algebras},
Commun. Algebra \textbf{53} (2) (2025), 909--916.
\url{https://doi.org/10.1080/00927872.2024.2394968}

\bibitem{numericalsgps}
{Delgado, M., Garc\'ia-S\'anchez, P. A. and Morais, J.}:
\emph{NumericalSgps, a package for numerical semigroups}, Version 1.3.0 (2022),
GAP package.
\url{https://gap-packages.github.io/numericalsgps}

\bibitem{GAP}
{The GAP Group}:
\emph{GAP -- Groups, Algorithms, and Programming}, Version 4.10.0 (2018).
\url{https://www.gap-system.org}

\bibitem{GimenezSrinivasan14}
{Gimenez, P. and Srinivasan, H.}:
\emph{A note on Gorenstein monomial curves},
Bull. Braz. Math. Soc. (N.S.) \textbf{45} (4) (2014), 671--678.
\url{https://doi.org/10.1007/s00574-014-0068-4}

\bibitem{GotoKienMatsuokaTruong18}
{Goto, S., Kien, D. V., Matsuoka, N. and Truong, H. L.}:
\emph{Pseudo-Frobenius numbers versus defining ideals in numerical semigroup rings},
J. Algebra \textbf{508} (2018), 1--15.
\url{https://doi.org/10.1016/j.jalgebra.2018.04.025}

\bibitem{JafariZarzuela18}
{Jafari, R. and Zarzuela Armengou, S.}:
\emph{Homogeneous numerical semigroups},
Semigroup Forum \textbf{97} (2018), 278--306.
\url{https://doi.org/10.1007/s00233-018-9941-6}

\bibitem{KienMatsuoka20}
{Kien, D. V. and Matsuoka, N.}:
\emph{Numerical Semigroup Rings of Maximal Embedding Dimension with Determinantal Defining Ideals},
in \emph{Numerical Semigroups},
Springer INdAM Ser. \textbf{40}, Springer, Cham, (2020), 185--196.
\url{https://doi.org/10.1007/978-3-030-40822-0_12}

\bibitem{XIN}
{Liu, F., Xin, G., Ye, S. and Yin, J.}:
\emph{On the Frobenius number and genus of a collection of semigroups generalizing repunit numerical semigroups},
Semigroup Forum (2025).
\url{https://doi.org/10.1007/s00233-025-10518-1}

\bibitem{ONeillPelayo18}
{O'Neill, C. and Pelayo, R.}:
\emph{Ap\'ery sets of shifted numerical monoids},
Adv. Appl. Math. \textbf{97} (2018), 27--35.
\url{https://doi.org/10.1016/j.aam.2018.01.005}

\bibitem{Ugo17}
{Ugolini, S.}:
\emph{On numerical semigroups closed with respect to the action of affine maps},
Publ. Math. Debrecen \textbf{90} (1--2) (2017), 149--167.
\url{https://doi.org/10.5486/PMD.2017.7500}

\bibitem{RoblesRosales18}
{Robles-P\'erez, A. M. and Rosales, J. C.}:
\emph{A combinatorial problem and numerical semigroups},
Ars Math. Contemp. \textbf{15} (2018), 323--336.
\url{https://doi.org/10.26493/1855-3974.989.d15}

\bibitem{libro}
{Rosales, J. C. and Garc\'ia-S\'anchez, P. A.}:
\emph{Numerical Semigroups},
Developments in Mathematics, Vol.~20, Springer, New York, (2009).
\url{https://doi.org/10.1007/978-1-4419-0160-6}

\bibitem{RosBraTorThabit}
{Rosales, J. C., Branco, M. B. and Torr\~ao, D.}:
\emph{The Frobenius problem for Thabit numerical semigroups},
J. Number Theory \textbf{155} (2015), 85--99.
\url{https://doi.org/10.1016/j.jnt.2015.03.006}

\bibitem{RosBraTorMersenne}
{Rosales, J. C., Branco, M. B. and Torr\~ao, D.}:
\emph{The Frobenius problem for Mersenne numerical semigroups},
Math. Z. \textbf{286} (2017), 741--749.
\url{https://doi.org/10.1007/s00209-016-1781-z}

\bibitem{RosBraTorRepunit}
{Rosales, J. C., Branco, M. B. and Torr\~ao, D.}:
\emph{The Frobenius problem for repunit numerical semigroups},
Ramanujan J. \textbf{40} (2016), 323--334.
\url{https://doi.org/10.1007/s11139-015-9719-3}

\end{thebibliography}
\end{document}